\numberwithin{equation}{section}
\def\R{\mathbb{R}}
\newcommand{\rdbrack}{]\!]}
\newcommand{\ldbrack}{[\![}
\def\g{\gamma}
\def\k{\kappa}
\def\epsilon{\varepsilon}
\def\e{\varepsilon}
\newcommand\br{\begin{rem}}
\newcommand\er{\end{rem}}
\newcommand\bp{\begin{pmatrix}}
\newcommand\ep{\end{pmatrix}}
\newcommand\be{\begin{equation}}
\newcommand\ee{\end{equation}}
\newcommand\ba{\begin{equation}\begin{aligned}}
\newcommand\ea{\end{aligned}\end{equation}}
\newtheorem{theorem}{Theorem}[section]
\newtheorem{proposition}[theorem]{Proposition}
\newtheorem{lemma}[theorem]{Lemma}
\newtheorem{example}[theorem]{Example}
\newtheorem{remark}[theorem]{Remark}
\newtheorem{ans}[theorem]{Definition}
\title{Stability properties of the steady state for the isentropic compressible  Navier-Stokes equations with density dependent viscosity in bounded intervals} 
\begin{document}

\maketitle

\begin{center} 
 MARTA STRANI\footnote{Universit\`a Ca' Foscari, Dipartimento di Scienze Molecolari e Nanosistemi, Venezia Mestre (Italy),
 E-mail address: \texttt{marta.strani@unive.it, martastrani@gmail.com}. }
\end{center}
\vskip1cm

\begin{abstract} We prove existence and asymptotic stability of the stationary solution for the compressible Navier-Stokes equations for isentropic gas dynamics with a density dependent  diffusion  in a bounded interval.  We present the necessary conditions to be imposed on the boundary data which ensure existence and uniqueness of the steady state, and we subsequent investigate its stability properties by means of the construction of a suitable Lyapunov functional for the system. The Saint-Venant system, modeling the dynamics of a shallow compressible fluid, fits into this general framework.
\end{abstract}

\begin{quote}\footnotesize\baselineskip 14pt 
{\bf Key words.} 
Navier-Stokes equations, parabolic-hyperbolic systems, stationary solutions, stability.
 \vskip.15cm
\end{quote}

\begin{quote}\footnotesize\baselineskip 14pt 
{\bf AMS subject classification.} 
35Q35, 35B35, 35B40, 76N10.
 \vskip.15cm
\end{quote}

\pagestyle{myheadings}
\thispagestyle{plain}
\markboth{M.STRANI}{STABILITY OF THE STEADY STATE FOR THE COMPRESSIBLE NAVIER-STOKES EQUATIONS }

\section{Introduction}

In this paper we study existence and stability properties of the steady state for the one dimensional compressible Navier-Stokes equations with density-dependent viscosity, which describes the isentropic motion of compressible viscous fluids {in a bounded interval}. In terms of
the variables mass density and velocity of the fluid $(\rho, w)$,  the problem reads as
\begin{equation}\label{PLgenerico2}
 \left\{\begin{aligned}
&  \rho_t + (\rho\, w)_x=0 \\
& (\rho \, w)_t+  \left( \rho \, w^2+ P(\rho)\right)_x= \varepsilon\left( \nu(\rho)  w_x \right)_x,  \ \ \ \ \ \ \ x\in(-\ell, \ell) 
   \end{aligned}\right. 
\end{equation}
to be complemented with  boundary conditions
\begin{equation*}
\rho(-\ell)=\rho_->0, \quad w(\pm \ell)= w_\pm >0,
\end{equation*}
and initial data $(\rho,w)(x,0)=(\rho_0(x),w_0(x))$, with $\rho_0>0$.

We restrict our analysis to the barotropic regime, where the pressure $P \in C^2(\R^+)$ is  a given function of
 the density $\rho$ satisfying the following assumptions
\begin{equation}\label{ipotesisuPgen}
P(0)=0, \quad P(+\infty)=+\infty, \quad P'(s),P''(s)>0 \ \ \  \forall s >0.
\end{equation}
Since it is well know that, in the isentropic case, the viscosity of a gas depends on the density,  we consider the case of a density dependent viscoity   $\nu \in C^1(\R^+)$ such that  $\nu(\rho)>0$ for all $\rho >0$. 

A prototype for the term of pressure  is given by the power law $P(\rho)=\rho^\gamma$, with $\gamma > 1$ the adiabatic constant, while the case $\nu(\rho)=\rho^{\alpha}$ with $\alpha>0$ is known as the Lam\'e viscosity coefficient.
In particular, the well known viscous Saint-Venant system, describing the motion of a shallow compressible fluid, corresponds to the choice $P(\rho)=\frac{1}{2}\kappa \rho^2$, $\kappa >0,$ and {$\nu(\rho)=\rho$}.

\vskip0.2cm
By considering the variables density and momentum $(u,v)=(\rho,\rho w)$, system \eqref{PLgenerico2} {becomes}
\begin{equation}\label{PLgenerico}
 \left\{\begin{aligned}
&  u_t + v_x=0 \\
&  v_t+  \left( \frac{v^2}{u}+ P(u)\right)_x= \varepsilon\left( \nu(u) \left(\frac{v}{u}\right)_x \right)_x, \\
   \end{aligned}\right. 
\end{equation}
together with boundary conditions
\begin{equation}\label{BC}
v_\pm u(-\ell)-v(\pm\ell)=0, \quad v(- \ell)= \rho_-w_- .
\end{equation}
In the following, we shall use both the formulations \eqref{PLgenerico2} and \eqref{PLgenerico}, depending on what is necessary; as an example, when studying the stationary problem, the variables $(u,v)$ appear to be more appropriate since in this case the second component of the steady state turns to be a constant.

\begin{remark}{\rm {Given an initial datum $(\rho_0,w_0) \in H^1(I) \times H^1(I)$, throughout the paper we will consider solutions to \eqref{PLgenerico2} $(\rho,w) \in L^\infty([0,T], H^1(I))\times L^\infty([0,T], H^1(I)) \cap L^2([0,T], H^2(I))$. We refer the readers to Theorem \ref{existence} in Appendix A  for the proof of the existence of such a solution. We stress that, of course,  also the solution $(u,v)$ to \eqref{PLgenerico} belongs to the same functional space,  since $(u,v)=(\rho, \rho w)$.}}
\end{remark}

Depending on assumptions and approximations,  the Navier-Stokes system may also contain
other terms and gives raise to different types of partial differential equations. Indeed, natural
modifications of the model emerge when additional physical effects are taken into account,
like viscosity, friction or Coriolis forces; far from being exhaustive but only intended to give a small flavor of the huge number of references, see, for instance, \cite{CK, Des, FeiPet, FeiPet2, LZZ} and the references therein for existence results of global weak and strong solutions,  \cite{JWX, LXY, MelVas} for the problem with a density-dependent  viscosity  vanishing on vacuum, \cite{BreDes07}  for the full Navier-Stokes system for viscous compressible and heat conducting fluids.   More recently, the interesting phenomenon of metastability
has been investigated both for the incompressible model \cite{BW, LinXu} and for the $1$D compressible problem \cite{S16}. 
\vskip0.2cm
As concerning system \eqref{PLgenerico2}, there { is a vast literature}  in both one and higher dimensions. Global existence results and asymptotic stability of the equilibrium states are obtained from Kawashima's theory of parabolic-hyperbolic systems in \cite{Kawa87}, D. Bresch, B. Desjardins and G. M\'etivier  in \cite{BrDeMe06}, P.L. Lions in \cite{Lio9698} and W. Wang in \cite{WangXu05} for the  viscous model, and C.M. Dafermos (see \cite{Daf97}) for the inviscid model.  
  As the compressible Navier-Stokes equations with density-dependent viscosity are suitable to model the dynamics of a  compressible viscous flow in the appearance of vacuum \cite{HS}, there are many literatures on the well-posedness theory of the solutions and their asymptotic behavior  for the $1$D model (see, for instance, \cite{CZ, GZ, JXZ, KV, MelVas2, QHY, YYZ, YZ} and the references therein).
However, most of these results concern with free boundary conditions. Recently, the analysis of the dynamics in bounded domains has also been  investigated (see, for instance, \cite{BDG}); the initial-boundary value problem with $\nu(\rho) = \rho^\alpha$,  $\alpha > 1/2$,  has been studied by H.L. Li, J. Li and Z. Xin in \cite{LiLiXin08}: here the authors are concerned with  the phenomena
of finite time vanishing of vacuum. We also quote the analysis  performed in \cite{LianGuoLi10}, where a particular attention is devoted to the dynamical  behavior close to equilibrium configurations.

\vskip0.2cm
The existence of stationary solutions for system \eqref{PLgenerico2} and, in particular, for  shallow water's type systems, and the subsequent investigation of their stability properties has also been considered in the literature.  To name some of these results, we recall here \cite{BasCorAN} and \cite{DiBaCo}, where the authors are concerned with the inviscid case; in particular, in \cite{DiBaCo} the authors  address the issue of stating sufficient boundary
conditions for the exponential stability of linear hyperbolic
systems of balance laws (for the investigation of the nonlinear problem, we refer to \cite{CBA, CBA2}).
\vskip0.2cm
The case with real viscosity has been addressed, for example, in  \cite{MascRou06};  we mention also the recent contributions \cite{KNZ, MN}, where the authors investigate asymptotic stability of the steady state in the half line. We point out that when dealing with the open channel case (i.e. $x\in \mathbb{R}$), the study of  the stationary problem is different than the one in the case of  bounded domains, where one has to handle compatibility conditions on the boundary values coming from the study of the formal hyperbolic limit $\e=0$. In this direction, we quote the papers \cite{PS,STRAS}, where the authors address the problem of the long time behavior of solutions  for the Navier-Stokes system in one dimension and with Dirichlet boundary conditions (see also \cite{STRAS2} for the extension of the results to the case of a density dependent diffusion). Finally, a recent contribution in the study of the stationary problem associated with a  simplified version of \eqref{PLgenerico2}
 is the paper \cite{Str14}, where the author considers  the special case of $P(\rho)=\kappa \,\rho^2/2$ and {$\nu(\rho)=\rho$}, corresponding to the viscous shallow water system.  

Being the literature on the subject so vast, we are aware that this list of references is far from being  exhaustive. 

\vskip0.2cm
Our aim in the present paper is  at first to prove existence and uniqueness of a stationary solution to \eqref{PLgenerico}-\eqref{BC}. Because of the discussion above, this results is likely to be achieved only if some appropriate assumptions on the boundary values are imposed; precisely, following the line of \cite{Str14} where this problem has been addressed for the case of a linear diffusion, our first main contribution (for more details, see Section 3), is the following theorem.
\begin{theorem}\label{teointro}
Given $\ell>0$ and $v_-, u_\pm>0$, let us consider the  problem
\begin{equation}
 \left\{\begin{aligned}\label{EqTeo}
 & u_t +  v_x=0 &\qquad &x \in (-\ell,\ell), \ t \geq 0 \\
& v_t+\left\{ \frac{v^2}{u}+ P(u)-\varepsilon \, \nu(u) \left( \frac{v}{u}\right)_x\right\}_x=0   \\
& u(\pm \ell,t)= u_\pm, \quad v(- \ell, t)=v_-, &\qquad &t \geq 0 \\
& u(x,0)=u_0(x), \quad v(x,0)=v_0(x) &\qquad &x \in (-\ell,\ell),
\end{aligned}\right.
\end{equation}
and let us suppose that the following assumptions are satisfied:
\vskip0.2cm
{\bf H1.} The term of pressure $P(u) \in C^2(\R^+)$ and the viscosity term $\nu(u)$ verify, for all $u>0$
\begin{equation*}
P(0)=0, \quad P(+\infty)=+\infty, \quad P'(u) \quad {\rm and } \quad P''(u)>0, \qquad \nu(u)>0;
\end{equation*}
\vskip0.1cm
{\bf H2.} Setting $f(u):= u\, \int_0^u P(z)/z^2 \, dz$, there hold 
 $$v_*^2 \, (u_+-u_-)= u_- \,u_+ \, \left(P(u_+)-P(u_-)\right)\quad {\rm and} \quad \left[\!\!\left[ \frac{v^3}{2u^2}+v f'(u)\right] \!\!\right] \leq 0,
$$
where $\ldbrack \,\cdot \, \rdbrack$  denotes the jump  and where $v_* := v_- \equiv v_+$. Then there exists a unique stationary solution $(\bar u(x),\bar v(x))$ to \eqref{EqTeo}, i.e. a unique solution $(\bar u,\bar v)$ independent on the time variable $t$ to the following boundary value problem
\begin{equation}
 \left\{\begin{aligned}\label{EqTeo2}
 & v_x=0 \\ 
& \left\{ \frac{v^2}{u}+ P(u)-\varepsilon \, \nu(u) \left( \frac{v}{u}\right)_x\right\}_x=0,   \\
& u(\pm \ell)= u_\pm, \quad v(- \ell)=v_-.  \\
\end{aligned}\right.
\end{equation}

\end{theorem}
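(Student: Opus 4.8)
The plan is to reduce the boundary value problem \eqref{EqTeo2} to a single scalar ODE for $\bar u$ and then to show this ODE has a unique solution satisfying the prescribed three-point boundary data. From the first equation $v_x=0$ we immediately get $\bar v(x)\equiv v_*$, a constant; the condition $v(-\ell)=v_-$ fixes $v_*=v_-$, and since the second component must also be compatible at $x=\ell$ we are forced to have $v_+=v_-=:v_*$ (this is why the notation $v_*$ appears in H2). Substituting $\bar v=v_*$ into the second equation of \eqref{EqTeo2} and using $(\bar v/\bar u)_x = v_*(1/\bar u)_x = -v_*\,\bar u_x/\bar u^2$, we obtain
\[
\left\{\frac{v_*^2}{\bar u}+P(\bar u)+\varepsilon\,\nu(\bar u)\,\frac{v_*\,\bar u_x}{\bar u^2}\right\}_x=0,
\]
so the quantity in braces is a second constant, call it $c$; this yields the first-order ODE
\[
\varepsilon\,v_*\,\frac{\nu(\bar u)}{\bar u^2}\,\bar u_x = c-P(\bar u)-\frac{v_*^2}{\bar u}=: G(\bar u;c).
\]

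Next I would analyze this autonomous scalar ODE. Dividing by the (strictly positive, since $\bar u>0$, $\nu>0$, $v_*>0$) coefficient $\varepsilon v_*\nu(\bar u)/\bar u^2$, the equation takes the separable form $\bar u_x = \Phi(\bar u;c)$ with $\Phi$ smooth in $\bar u>0$. Imposing the two-point data $\bar u(-\ell)=u_-$, $\bar u(\ell)=u_+$ reduces the existence/uniqueness question to showing that the "period-type" integral
\[
\int_{u_-}^{u_+}\frac{\varepsilon\,v_*\,\nu(s)/s^2}{G(s;c)}\,ds = 2\ell
\]
has a unique root $c$; the monotonicity of the left side in $c$ (differentiate under the integral sign, noting $\partial_c G=1$) together with a limiting analysis as $c$ ranges over admissible values should give existence and uniqueness of $c$, hence of $\bar u$, provided we know $G(s;c)$ does not vanish strictly between $u_-$ and $u_+$ — i.e. the trajectory is monotone. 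This is exactly where the first identity in H2 enters: $v_*^2(u_+-u_-)=u_-u_+(P(u_+)-P(u_-))$ is precisely the statement that $G(u_-;c)=G(u_+;c)$ for the relevant $c$ (equivalently, $P(s)+v_*^2/s$ takes the same value at the two endpoints), which forces the correct sign configuration at the endpoints; combined with the convexity hypotheses $P',P''>0$ from H1, the function $s\mapsto P(s)+v_*^2/s$ is convex, so $G(\cdot;c)$ has constant sign on the open interval between $u_-$ and $u_+$, guaranteeing a monotone connecting orbit.

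The remaining role of the second condition in H2, the jump inequality $\ldbrack \tfrac{v^3}{2u^2}+v f'(u)\rdbrack\le 0$ with $f(u)=u\int_0^u P(z)/z^2\,dz$, is presumably to pin down which of $u_-,u_+$ is the larger (i.e. the orientation of the profile / the sign of $\bar u_x$) and thereby to ensure the integral identity above is solvable with $\bar u$ staying positive and bounded away from $0$ on $[-\ell,\ell]$; I would verify that $f'(u) = \int_0^u P(z)/z^2\,dz + P(u)/u$ and interpret the bracket as a monotonicity/entropy condition on the associated flux, matching the linear-diffusion case treated in \cite{Str14}. The main obstacle I anticipate is not the ODE integration but controlling the solution $\bar u$ quantitatively: showing $\inf_{[-\ell,\ell]}\bar u>0$ and that the constant $c$ stays in the range where $G$ has the right sign, uniformly enough that the implicit-function/monotonicity argument for $c\mapsto\int_{u_-}^{u_+}(\cdots)\,ds$ actually closes; degenerate behavior of $\nu(s)/s^2$ as $s\to 0$ is harmless here only because we stay away from vacuum, and making that rigorous (rather than assuming it) is the delicate point. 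I would handle it by a shooting/continuity argument in $c$, using H1–H2 to rule out the trajectory reaching $0$ or $+\infty$ before $x$ traverses $[-\ell,\ell]$.
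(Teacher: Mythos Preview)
Your reduction is correct and is exactly the paper's strategy: impose $\bar v\equiv v_*$, integrate once to get a scalar autonomous ODE for $\bar u$ with a free constant, and then show that the ``length integral'' equals $2\ell$ for a unique value of that constant by monotonicity plus the limiting behavior at the endpoints of the admissible range. Your ODE, after multiplying by $\bar u$, is the paper's
\[
\varepsilon v_*\,\frac{\nu(u)}{u}\,u_x \;=\; -P(u)\,u+\alpha\,u-v_*^2,
\]
with $\alpha=c$. The paper additionally performs the change of variable $w=\Phi(u)=\int_0^u \nu(s)/s\,ds$ to write the ODE as $\varepsilon v_* w_x=g(w)$, and then studies the zeros of $g$; your convexity observation (that $s\mapsto P(s)+v_*^2/s$ is strictly convex and, by the first identity in \textbf{H2}, takes equal values at $u_\pm$, hence $G(\cdot;c)$ keeps its sign on $(u_-,u_+)$) is a clean substitute for that zero-analysis and yields the same conclusion.

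Two comments. First, your worry about vacuum is not a genuine obstacle: once you know $G>0$ on $[u_-,u_+]$ and hence $\bar u_x>0$, the orbit is trapped in $[u_-,u_+]\subset(0,\infty)$, so $\inf\bar u\ge u_->0$ automatically and no shooting/continuity argument is needed to rule out $\bar u\to 0$. Second, the first identity in \textbf{H2} is not actually needed for the ODE argument to close (for $c$ large enough, $G(s;c)>0$ on the compact interval regardless), so your phrasing ``this is exactly where the first identity in H2 enters'' overstates its role; its true role is the Rankine--Hugoniot compatibility from the inviscid analysis. What \emph{is} needed from \textbf{H2} is the entropy inequality, which selects $u_-<u_+$ and hence the sign of $\bar u_x$---your guess about that is correct.
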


\begin{remark}{\rm
We stress that the choice of the variables $(u,v)$ (instead of the most common choice density/velocity), is dictated by the fact that the second component  of the steady state turns to be a constant, and this constant value is univocally determined once the boundary data are imposed.

}
\end{remark}

Once the existence of a unique steady state for system \eqref{EqTeo} is proved, we devote the second part of this paper to investigate its stability properties. Precisely,  we prove stability of the steady state in the sense of the following definition ($L^2$-stability).
\begin{ans}\label{defintro}
A  stationary solution $(\bar u,\bar v)$ to \eqref{EqTeo} is {\it stable} if for any $\e_0>0$ there exists $\delta_0=\delta_0(\e_0)$ such that, if $|(u_0,v_0)(x)-(\bar u,\bar v)(x)|_{{}_{L^2}} < \delta_0$, then, for all $T>0$, it holds $$\sup_{0\leq t\leq T}|(u,v)(t)-(\bar u,\bar v)|_{{}_{L^2}} \leq \e_0, $$
where $(u,v)(t)$ is the solution to  \eqref{EqTeo}.
\end{ans}
Our second main result is stated in the next
theorem; it shows the  stability of the
stationary solution constructed in Theorem \ref{teointro} under an additional assumption on the values of the density and its derivative at the boundary.
\begin{theorem}\label{teointro2}
Let the assumptions of Theorem \ref{teointro} be satisfied, 
and let us also assume that there exist constants $\delta_1, \delta_2>0$ small enough such that   the boundary values $u(\pm\ell, t)=u_\pm$ and $u_x(\pm \ell,t)$ satisfy
\vskip0.2cm
{\bf H3.}
$|u_+-u_-| < \delta_1 \qquad $ and $\qquad |u_x(\ell,t)-u_x(-\ell,t)| < \delta_2 \ \ \forall \ t \geq 0.$

\vskip0.2cm
\noindent Then the steady state $(\bar u,\bar v)$ is stable in the sense of Definition \eqref{defintro}.
\end{theorem}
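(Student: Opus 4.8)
The plan is to prove stability by constructing a Lyapunov functional that measures the distance between the solution $(u,v)(t)$ and the steady state $(\bar u, \bar v)$, and showing it is (essentially) non-increasing in time up to controllable boundary contributions. Since $\bar v$ is constant (call it $v_*$) and $\bar u$ solves the second-order ODE in \eqref{EqTeo2}, the natural candidate is a mechanical-type energy: set $\phi = u - \bar u$, $\psi = v - \bar v$, and consider
\begin{equation*}
\mathcal{E}[\phi,\psi](t) = \int_{-\ell}^{\ell} \left( \frac{\psi^2}{2\bar u} + Q(u,\bar u) \right) dx,
\end{equation*}
where $Q(u,\bar u) = \int_{\bar u}^{u} \frac{P(z)-P(\bar u)}{z}\,dz$ (or an analogous relative-entropy density adapted to the flux $\frac{v^2}{u}+P(u)$) is a nonnegative function vanishing to second order at $u=\bar u$ thanks to $P'>0$ in {\bf H1}. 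The first step is to verify that $\mathcal{E}$ is comparable to $\|(\phi,\psi)\|_{L^2}^2$, which requires lower and upper bounds on $u$ and $\bar u$; these positivity bounds should follow from the construction in Theorem \ref{teointro} together with a continuity/bootstrap argument along the flow, using {\bf H3} to keep $u$ close to $\bar u$ which is itself close to a constant.

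The second step is to differentiate $\mathcal{E}$ along the solution. Using the equations \eqref{EqTeo} for $(u,v)$ and \eqref{EqTeo2} for $(\bar u,\bar v)$, integration by parts produces a ``good'' dissipation term of the form $-\e \int_{-\ell}^{\ell} \nu(u)\, |(\psi/u)_x|^2\,dx$ (coming from the viscous term acting on the perturbation), a collection of quadratic-in-$(\phi,\psi)$ bulk terms coming from the nonlinear flux which are formally of lower order, and boundary terms evaluated at $x=\pm\ell$. The boundary terms are where {\bf H2} and {\bf H3} enter: the first identity in {\bf H2}, $v_*^2(u_+-u_-) = u_-u_+(P(u_+)-P(u_-))$, is precisely the compatibility condition that makes the ``pressure + convective'' boundary flux of the steady state vanish in the relevant combination, while the jump inequality $\ldbrack v^3/(2u^2) + v f'(u) \rdbrack \le 0$ controls the sign of the remaining boundary contribution. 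The smallness in {\bf H3}, $|u_+-u_-|<\delta_1$ and $|u_x(\ell,t)-u_x(-\ell,t)|<\delta_2$, is then used to absorb the perturbative bulk terms and any residual boundary terms into the dissipation via Poincar\'e-type inequalities on $(-\ell,\ell)$ (note $\psi(-\ell)=0$ since $v(-\ell,t)=v_-=\bar v$, which gives a clean Poincar\'e inequality for $\psi$, and $\phi(\pm\ell)=0$ since $u(\pm\ell,t)=u_\pm$).

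Putting these together yields a differential inequality of the form $\frac{d}{dt}\mathcal{E}(t) \le -c\,\e\,\mathcal{D}(t) + (\text{boundary terms} \le 0 \text{ by } {\bf H2}) $, or at worst $\frac{d}{dt}\mathcal{E}(t) \le C(\delta_1+\delta_2)\mathcal{E}(t)$ on the time interval where the a priori bounds hold; in either case Gr\"onwall and the equivalence $\mathcal{E}\sim\|(\phi,\psi)\|_{L^2}^2$ give, for suitable $\delta_0$, that $\|(u,v)(t)-(\bar u,\bar v)\|_{L^2}\le\e_0$ for all $t\in[0,T]$, which is exactly Definition \ref{defintro}. A standard continuation argument closes the loop: the a priori positivity and closeness bounds used to justify the energy estimate are themselves recovered from the conclusion provided $\delta_0$ is chosen small enough.

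The main obstacle I expect is controlling the boundary terms produced when integrating by parts the viscous and convective fluxes. Unlike the half-line or whole-line problems, here both endpoints contribute, and the viscous term $\e(\nu(u)(v/u)_x)_x$ generates boundary quantities involving $u_x(\pm\ell,t)$ that are not prescribed by the Dirichlet data for $(u,v)$; this is exactly why {\bf H3} imposes a bound on $|u_x(\ell,t)-u_x(-\ell,t)|$ rather than on $u_x$ itself, and the delicate point is to show that the combination of boundary terms that actually appears depends only on this difference (plus lower-order quantities controlled by $\delta_1$), after invoking the steady-state identity from {\bf H2}. Getting the algebra of these boundary terms to match the hypotheses — in particular identifying the quantity $v^3/(2u^2)+vf'(u)$ with $f(u) = u\int_0^u P(z)/z^2\,dz$ as the correct ``boundary energy flux'' — will require careful bookkeeping, and is the step where the specific structure of the flux $\frac{v^2}{u}+P(u)$ and of the definition of $f$ must be used in an essential way.
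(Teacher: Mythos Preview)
Your plan is essentially the paper's: a relative-entropy/Lyapunov functional, time differentiation using the energy structure, a negative viscous dissipation, and control of the boundary contributions via {\bf H2}--{\bf H3}. Three points of calibration are worth flagging. First, the precise functional the paper uses is
\[
L(t)=\int_{-\ell}^{\ell}\Bigl(\frac{(v-\bar v)^2}{2u}+u\int_{\bar u}^{u}\frac{P(z)-P(\bar u)}{z^2}\,dz\Bigr)\,dx,
\]
with the \emph{time-dependent} density $u$ in the denominator and $z^{-2}$ in the integrand; this choice is not cosmetic, since it makes the kinetic part match the exact energy identity $\partial_t\bigl(\tfrac{v^2}{2u}+u\phi(u)\bigr)+\partial_x\mathcal Q=-\varepsilon\nu(u)[(v/u)_x]^2$, so that after differentiation one lands directly on the boundary flux $\tfrac{v^3}{2u^2}+vf'(u)$ appearing in {\bf H2}. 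Your variant with $\bar u$ downstairs and $z^{-1}$ in $Q$ would not produce this cancellation cleanly. Second, a key algebraic simplification you will need is that $u_t(\pm\ell,t)=0$ forces $v_x(\pm\ell,t)=0$ via the mass equation; this is what reduces the viscous boundary terms to expressions involving only $u_x(\pm\ell,t)$, and is exactly why {\bf H3} is phrased in terms of $u_x$. Third, your ``at worst'' fallback $\tfrac{d}{dt}\mathcal E\le C(\delta_1+\delta_2)\mathcal E$ is not enough: Gr\"onwall then gives a bound growing with $T$, while Definition~\ref{defintro} requires $\varepsilon_0$ independent of $T$. The paper does not absorb into the dissipation via Poincar\'e; rather it shows that several boundary/bulk pieces are genuinely nonpositive (e.g.\ $\tfrac{1}{u_+^2}-\tfrac{1}{u_-^2}<0$, $\int_0^{u_-}P/z^2-\int_0^{u_+}P/z^2<0$), collects them into a strictly negative constant, and then chooses $\delta_1,\delta_2$ so small that the residual positive terms (all bounded by $C_1\delta_1+C_2\delta_2$) do not overturn the sign, yielding $\tfrac{d}{dt}L\le 0$ outright.
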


\begin{remark}
{\rm It is worth notice that Theorem \ref{teointro2} prove stability of the steady state for all time (since the constant $\e_0$  in Definition \ref{defintro} in independent on $T>0$). We also point out that the strategy used here do not provide stability of $(\bar u, \bar v)$ in the case the  boundary values $u_\pm$ and $u_x(\pm \ell)$ do not satisfy any smallness condition, while its existence is assured also in this setting (cfr Theorem \ref{teointro}); however, our guess is that \textquotedblleft large\textquotedblright \, solutions are not stable (see also the analysis of \cite{MN} and \cite{ZZZ}, where a similar smallness condition has been imposed  in order to have stability of the steady state to a Navier-Stokes system in the half line), and this will be the object of further investigations. We finally notice that, because of the results of Appendix A (in particular, see Remark \ref{normapiccola}), if the initial datum for the density satisfies $|\rho_0|_{{}_{L^\infty}} \leq \delta_1$, then this property is invariant under the dynamics.


}
\end{remark}

We close this Introduction with a short plan of the paper. In Section 2 we study the {inviscid} problem, obtained formally by setting $\e=0$ in \eqref{PLgenerico}; we show that, at the hyperbolic level, some compatibility conditions on the boundary data are needed in order to ensure the existence of a weak solution. In particular, such conditions follow from the definition of a couple entropy/entropy flux which, in the present setting, are given by
\begin{equation*}
\mathcal E(u,v):=\frac{v^2}{2u}+f(u) \quad \mbox{and} \quad  \mathcal Q(u,v):=\frac{v}{u}\left[ \frac{v^2}{2u}+u f'(u)\right],
\end{equation*}
being $f(u):= u \int _{0}^u P(z)/ z^2 \, dz$.

Section 3 is devoted to the study of the stationary problem  for \eqref{EqTeo}, and in particular to the proof  of Theorem \eqref{teointro}; to this aim we will state and prove several Lemmas  showing that, once the boundary conditions are imposed and assumption {\bf H2} is satisfied, there exists a {\it unique positive connection} for \eqref{EqTeo}, i.e. a unique stationary solution connecting the boundary data. 
Such analysis deeply relies on the strategy firstly performed in \cite{Str14}, where the author addresses the same problem in the easiest case of a linear diffusion, namely $\nu(u)=u$.

Section 4 is the core of the paper, and we investigate to the stability properties of the steady state, proving that it is stable in the sense of Definition \ref{defintro}; the key point to achieve such result is the construction of a Lypaunov functional, which, in the present setting, is defined as
\begin{equation*}
L(u(t),v(t),\bar u,\bar v) :=  \int_{-\ell}^\ell \frac{(v-\bar v)^2}{2u} + u \, \psi (u, \bar u) \, dx, \quad \psi (u, \bar u)= \int_{\bar u}^u \frac{P(z)-P(\bar u)}{z^2} \, dz.
\end{equation*}
It is easy to check that $L(u,v,\bar u,\bar v)$ is positive defined and null only when computed on the steady state; the tricky part will be the computation of the sign of its time derivative along the solutions, needed in order to apply a Lyapunov type stability theorem. 

Finally, in Appendix A we  prove the existence of a solution  to \eqref{PLgenerico2} belonging to $L^{\infty}([0,T],H^1(I))$ $\times L^{\infty}([0,T],H^1(I))\cap L^{2}([0,T],H^2(I))$.

\vskip0.2cm
As stressed in the introduction, results relative to the existence and stability properties of the steady state for the Navier-Stokes equations in bounded intervals appear to be rare; the study of the stationary problem for \eqref{EqTeo} (with  generic pressure $P(u)$ and  viscosity $\nu(u)$) and, mostly,  the subsequent investigation of the stability properties of the steady state are, to the best of our knowledge, new.


\section{The inviscid  problem }

We start our analysis by studying the limiting  regime $\e \to 0$, obtained formally by putting $\e=0$ in \eqref{PLgenerico2}; we obtain the following hyperbolic system for unviscous isentropic fluids
\begin{equation}\label{PLepszero1}
 \left\{\begin{aligned}
 &  \rho_t + (\rho w)_x=0, \\
&  (\rho w)_t+\left( \rho w^2+ P(\rho)\right)_x=0.
\end{aligned}\right.
\end{equation}
System \eqref{PLepszero1} is complemented with the same boundary and initial conditions of \eqref{PLgenerico2}.
We recall that the usual setting where such a system is studied  is given by the \textit{entropy formulation}, hence non classical discontinuous solutions can appear; thus, we primarily concentrate on the problem of determining the entropy jump conditions for the hyperbolic system \eqref{PLepszero1}.
As previously done in \cite{Str14}  such conditions are dictated by the choice of a couple entropy/entropy flux $\mathcal  E=\mathcal E(\rho,w)$ and $\mathcal Q= \mathcal Q(\rho,w)$ such that
\begin{itemize}
\item the mapping $(\rho,w) \to \mathcal E$ is convex;
\vskip0.2cm
\item $ \mathcal E_t + \mathcal Q_x =0$ in any region where $(\rho,w)$ is a solution to \eqref{PLepszero1}.
\end{itemize}
In particular,  $ \mathcal E_t +\mathcal Q_x \equiv0$ if and only if
\begin{equation}\label{eqflux}
\left\{ \begin{aligned}
 \mathcal Q_\rho &= w \,  \mathcal  E_\rho + \frac{P'}{\rho} \, \mathcal E_w \\
 \mathcal Q_w &= \rho \,  \mathcal  E_\rho + w \,   \mathcal E_w.
\end{aligned}\right.
\end{equation}
In the case of a general term of pressure $P(\rho)$ satisfying assumptions \eqref{ipotesisuPgen}, the couple entropy/entropy flux is given by
\begin{equation}\label{EQgeneric}
\mathcal E(\rho,w)=\frac{1}{2}\rho w^2 + f(\rho) \qquad \mbox{and} \qquad  \mathcal Q(\rho,w) =w\left[\frac{1}{2}\rho w^2+\rho \,f'(\rho)\right], 
\end{equation}
being 
\begin{equation*}
f ( \rho ) = \rho \int^{\rho}_0 \frac{p ( z )}{ z^2} \, dz.
\end{equation*}
When considering the case of a power law type of pressure, i.e. $P(\rho)=\kappa \rho^\gamma$ with $\kappa >0$ and $\gamma >1$, the entropy corresponds to the physical energy of the system  (see, for instance, \cite{Evans}) and it is defined as
\begin{equation}\label{E}
\mathcal E(\rho,w):=\frac{1}{2}\rho w^2+ \frac{\kappa}{\gamma-1} \rho^{\gamma}.
\end{equation}
By solving \eqref{eqflux}, it turns out that  $\mathcal Q$ is defined as
\begin{equation}\label{Q}
\mathcal Q(\rho, w)= w \left[ \frac{1}{2}\rho w^2 +\frac{\kappa \gamma}{\gamma-1} \rho^\gamma\right],
\end{equation}
and we recover \eqref{EQgeneric} with $P(\rho)=\kappa \rho^\gamma$. 

Following the line of \cite{Str14},  and given $\rho_\pm >0$, $w_\pm>0$ and $c \in \R$, we let $(\rho_-,w_-)$ and $(\rho_+,w_+)$ be an entropic discontinuity of \eqref{PLepszero1} with speed $c$, that is we assume  the function
\begin{equation}\label{weaksol}
 (\Upsilon,W)(x,t) :=  \left\{\begin{aligned}
&(\rho_-,w_-) \quad \textrm{for} \ x<ct \\
&(\rho_+,w_+) \quad \textrm{for} \ x>ct
   \end{aligned}\right.
\end{equation}
to be a weak solution to \eqref{PLepszero1} satisfying, in the sense of distributions, the entropy inequality
\begin{equation}\label{PNEineq}
\frac{\partial \mathcal E}{\partial t}+ \frac{\partial \mathcal Q}{\partial x} \leq 0.
\end{equation}
On  one side, with the change of variable $\xi=x-ct$, system \eqref{PLepszero1} reads
\begin{equation*}
 \left\{\begin{aligned}
 &  -c \, \rho_\xi +(\rho w)_\xi=0, \\
& -c  (\rho w)_\xi +  \left(\rho w^2 +P(\rho)\right)_\xi=0,  
\end{aligned}\right.
\end{equation*}
and the request of weak solution translates into the \textit{Rankine-Hugoniot} conditions, that read
\begin{equation}\label{ranhug}
\ldbrack\rho(w-c)\rdbrack=0 \qquad \mbox{and} \qquad \ldbrack\rho w (w-c)+P(\rho)\rdbrack=0.
\end{equation}
On the other side, the {\it entropy condition} \eqref{PNEineq} reads  $\ldbrack \mathcal Q -c \mathcal E\rdbrack \leq 0$, where
\begin{equation*}
\mathcal Q - c\mathcal E =\frac{1}{2}\rho w^3+f'(\rho) w \rho-\frac{c}{2}\rho w^2-cf(\rho).
\end{equation*}
Setting $w-c=z$, we have
\begin{equation*}
\mathcal Q - c\mathcal E =\frac{1}{2}\rho z^3+f'(\rho)\rho z+c\left[\rho v^2-f(\rho)+f'(\rho)\rho\right]+\frac{1}{2}c^2 \rho v,
\end{equation*}
so that, recalling 
\begin{equation*}
f'(\rho)= \int_0^{\rho} \frac{P(z)}{z^2} \, dz+ \frac{P(\rho)}{\rho} \quad \Longrightarrow \quad f'(\rho)\rho=P(\rho)+f(\rho),
\end{equation*}
and by using \eqref{ranhug} , the entropy condition translates into
\begin{equation}\label{PNEineq2}
 \left[\!\!\left[  \frac{1}{2}\rho z^3 +\rho z\, f'(\rho)   \right]\!\!\right]  \leq 0.
\end{equation}
By squaring the first equation in \eqref{ranhug}, we obtain a system for the quantities $z_{\pm}^2$, whose solutions are given by
\begin{equation}\label{h}
z_+^2= \frac{\rho_-}{\rho_+}\frac{[P(\rho_+)-P(\rho_-)]}{(\rho_+-\rho_-)},   \qquad z_-^2=\frac{\rho_+}{\rho_-}\frac{[P(\rho_+)-P(\rho_-)]}{(\rho_+-\rho_-)}.
\end{equation}
When looking for the stationary solutions to \eqref{PLepszero1}, i.e. $c=0$, \eqref{h} translates into the following conditions for the boundary values
\begin{equation}\label{condizionialbordo}
w_+^2= \frac{\rho_-}{\rho_+}\frac{[P(\rho_+)-P(\rho_-)]}{(\rho_+-\rho_-)}   \qquad \mbox{and} \qquad w_-^2=\frac{\rho_+}{\rho_-}\frac{[P(\rho_+)-P(\rho_-)]}{(\rho_+-\rho_-)},
\end{equation}
that, together with \eqref{PNEineq2}, univocally determine the possible choices of  the boundary  data for the jump solution \eqref{weaksol} with $c=0$ to be an admissible steady state for the system. 

In particular, for all $x_0 \in (-\ell,\ell)$, we can state that the one-parameter family 
\begin{equation*}
(\Upsilon,W)(x) = (\rho_-, w_-) \chi_{_{(-\infty,x_0)}} + (\rho_+, w_+) \chi_{_{(x_0, \infty)} }
\end{equation*}
is a family of stationary solutions to \eqref{PLepszero1}
if and only if  both \eqref{PNEineq2} and \eqref{condizionialbordo} are satisfied.

Finally, we point out   that, in terms of the variables density/momentum,  conditions \eqref{PNEineq2}-\eqref{condizionialbordo} read as
\begin{equation*}
v_+^2=v_-^2= u_+u_-\frac{[P(u_+)-P(u_-)]}{(u_+-u_-)} \qquad \mbox{and} \qquad \left[\!\!\left[  \frac{v}{u}\left(\frac{v^2}{2u} +u f'(u)\right)   \right]\!\!\right] \leq 0.
\end{equation*}

\begin{example}
{\rm
In the case of the scalar Saint-Venant system, i.e. $P(u)=\frac{1}{2}\kappa u^2$, stationary solutions to 
\begin{equation*}
  u_t + v_x=0, \quad
  v_t+\left( \frac{v^2}{u}+ P(u)\right)_x=0,  
\end{equation*}
to be considered with boundary data $u(\pm\ell,t)=u_\pm$ and $v(-\ell)=v_*$,
solve
\begin{equation*}
v=  v_*, \quad \frac{1}{2}\kappa u^3-\alpha u + v_*^2=0,
\end{equation*}
where
\begin{equation*}
v_*^2=\frac{1}{2}\kappa \, u_-u_+(u_++u_-) \qquad {\rm and} \qquad \alpha=\frac{1}{2}\kappa \,(u_+^2+u_+u_-+u_-^2).
\end{equation*}
Moreover, only entropy solutions are admitted, so that, from \eqref{PNEineq2} 
\begin{equation*}
\frac{v_+}{u_+}(u_+-u_-) \geq 0.
\end{equation*}
Since $v_+,u_+ >0$, then $u_-<u_+$, and this condition describes the realistic phenomenon of the \textit{hydraulic jump} consisting in an abrupt rise of the fluid surface and a corresponding decrease of the velocity. 
}
\end{example}

\section{Stationary solutions for the viscous problem}

This section is devoted to the study of  the existence and uniqueness of a stationary solution for the Navier-Stokes system \eqref{PLgenerico}. As stressed in the introduction,  we here prefer to use the variables density/momentum $(u,v)$ rather than the most common choice density/velocity since in this case the second component of steady state turns to be  a constant, which is univocally determined by the boundary values. We are thus left with a single equation for the variable $u$ which can be integrated with respect to $x$, by paying the price of the appearance of an integration constant. 
\vskip0.2cm
For $\varepsilon >0$, the stationary equations read
\begin{equation}\label{PLstaz}
\begin{aligned}
v_x=0, \qquad \left\{ \frac{v^2}{u}+ P(u)- \varepsilon \nu(u) \left(\frac{v}{u}\right)_x \right\}_x=0, 
  \end{aligned}
\end{equation}
which is a couple of ordinary differential equations; by integrating in $x$ we can lower the order of the system obtaining the following stationary problem for the couple $(u,v)$:
\begin{equation}\label{PLgenericostaz}
 \left\{\begin{aligned}
& v=v_* \\
&v_* \varepsilon \frac{\nu(u)}{u} u_x=-P(u)u+\alpha\, u-v_*^2 \\
& u(\pm \ell)=u_\pm, \quad v(- \ell)=v_-,
  \end{aligned}\right.
\end{equation}
being  $\alpha$  an integration constant that depends on the values of the solution and its derivative on the boundary, while $v_*$ is  univocally determined by the boundary datum $v(- \ell)$; indeed, since the component $v$ of the steady state turns to be  constant, the values $v(\ell)$ and $v(-\ell)$ are forced to be equal to a common value, named here $v_*$.

Let us define $$\Phi(u)=\int_0^u \frac{\nu(s)}{s}ds;$$ 
since $\nu>0$ there hold
\begin{equation*}
\Phi(u)>0 \quad \mbox{and} \quad \Phi'(u)= \frac{\nu(u)}{u} >0, \quad \forall u >0,
\end{equation*}
 where $\Phi'$ indicates the derivative of $\Phi$ with respect to $u$. Thus, the second equation in \eqref{PLgenericostaz} can be rewritten as
\begin{equation*}
v_* \varepsilon [\Phi( u)]_x=-P(u)u+\alpha\, u-v_*^2.
\end{equation*}
Setting $f(u):=-P(u)u+\alpha\, u-v_*^2$, with the change of variable $w=\Phi(u)$, and since $\Phi(u)$ is invertible, we have
\begin{equation}\label{autonoma}
v_* \varepsilon   w_x=(f \circ \Phi^{-1})(w) \equiv g(w).
\end{equation}
We thus end up with an autonomous  first order differential equation of the form $w'=g(w)$; in this case it is not possible to obtain an explicit expression for the solution, and in order to provide qualitative properties of the solution we have to study the function $g(w)$.

The problem of studying properties of the right hand side of  \eqref{autonoma} has been previously addressed in \cite{Str14} in the case of a linear diffusion $\nu(u)=u$ (that is, $\Phi(u) \equiv \mbox{Id}$). Precisely, the author states and proves a set of results describing the behavior of the function $f(u;\alpha)$ both with respect to $u$ and with respect to $\alpha$.

We recall here some of these results for completeness, since they will be useful to describe the qualitative properties of the function $g(w):=(f \circ \Phi^{-1})(w)$; for more details we refer to \cite[Lemma 3.1, Lemma 3.2]{Str14}. From now on, we will always suppose the pressure term $P(u)$ to satisfy assumptions \eqref{ipotesisuPgen}. We also recall that, by definition
\begin{equation}\label{definizionef}
f(u):=-P(u)u+\alpha\, u-v_*^2.
\end{equation}

\begin{lemma}\label{lemma-1}
For every $v_*>0$, there exists at least a value $\alpha$ such that there exist two positive solutions to the equation $f(u)=0$. 
\end{lemma}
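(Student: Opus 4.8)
The plan is to analyze the function $f(u) = -P(u)u + \alpha u - v_*^2$ as a function of $u$ for fixed $v_* > 0$, and show that $\alpha$ can be tuned so that $f$ has exactly two positive roots. First I would record the basic shape of $f$: since $P(0)=0$ we have $f(0) = -v_*^2 < 0$, and since $P(+\infty)=+\infty$ (so $uP(u)\to+\infty$ faster than linearly) we get $f(u)\to -\infty$ as $u\to+\infty$. Thus $f$ starts and ends negative, and having two positive zeros is equivalent to $f$ attaining a strictly positive maximum somewhere in $(0,\infty)$.

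Next I would study the critical points of $f$. Compute $f'(u) = -P(u) - uP'(u) + \alpha = \alpha - \big(P(u) + uP'(u)\big)$. Let $h(u) := P(u) + uP'(u)$; then $h(0)=0$, $h(+\infty)=+\infty$, and $h'(u) = 2P'(u) + uP''(u) > 0$ for $u>0$ by hypothesis \eqref{ipotesisuPgen}, so $h$ is a strictly increasing bijection of $(0,\infty)$ onto $(0,\infty)$. Hence for each $\alpha>0$ the equation $f'(u)=0$ has a unique solution $u_\alpha = h^{-1}(\alpha)$, and $f$ is increasing on $(0,u_\alpha)$ and decreasing on $(u_\alpha,\infty)$; that is, $f$ has a unique maximum at $u_\alpha$. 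So the number of positive roots of $f$ is exactly two precisely when $f(u_\alpha) > 0$.

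It then remains to show that $f(u_\alpha)>0$ for a suitable choice of $\alpha$. Using $\alpha = h(u_\alpha) = P(u_\alpha) + u_\alpha P'(u_\alpha)$, evaluate
\begin{equation*}
f(u_\alpha) = -P(u_\alpha)u_\alpha + \alpha u_\alpha - v_*^2 = u_\alpha^2 P'(u_\alpha) - v_*^2.
\end{equation*}
Since $P'>0$ and $P'$ is continuous with $P'$ bounded below by a positive constant near, say, any fixed compact set away from $0$, the quantity $u^2 P'(u)$ tends to $+\infty$ as $u\to+\infty$ (indeed $u^2P'(u)\ge u^2 P'(1)$ for $u\ge 1$). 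Therefore, given $v_*>0$, pick $u_0$ large enough that $u_0^2 P'(u_0) > v_*^2$, and set $\alpha := h(u_0)$; then $u_\alpha = u_0$ and $f(u_\alpha) = u_0^2 P'(u_0) - v_*^2 > 0$, which by the monotonicity analysis above yields exactly two positive zeros of $f$. This proves the lemma.

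The only mild subtlety — the part I expect to require a word of care rather than real difficulty — is justifying $u^2 P'(u)\to+\infty$: it does not follow from $P'>0$ alone in general, but here $P'$ is increasing (because $P''>0$), so $P'(u)\ge P'(1)>0$ for all $u\ge 1$ and the claim is immediate. Everything else is elementary calculus on the one-variable function $f$, and the construction of $\alpha$ is explicit.
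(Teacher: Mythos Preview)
Your proof is correct and follows essentially the same approach as the paper: the paper (via Remark~\ref{R1}, which summarizes the argument from \cite{Str14}) observes that $f(0)<0$, $f(+\infty)=-\infty$, and that the existence of two positive zeros is equivalent to the maximum value $f(u^*)$ being positive, which reduces to the condition $v_*^2 < P'(u^*)(u^*)^2$. You carry out exactly this analysis, with the additional care of showing that $h(u)=P(u)+uP'(u)$ is a strictly increasing bijection (hence the critical point is unique) and of justifying that $u^2P'(u)\to+\infty$ via the monotonicity of $P'$; these details are welcome and the argument is complete.
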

\begin{remark}\label{R1}{\rm
As enlightened in the proof of \cite[Lemma 3.1]{Str14}, a sufficient condition on the constant $\alpha$ for the existence of two positive solutions to $f(u)=0$ is given by
\begin{equation}\label{sigmaeq1}
v_*< \sqrt{P'(u^*){u^*}^2},
\end{equation}
where $u^*=u^*( \alpha)$ solves $f'(u)=0$, while $v_*=v(\pm\ell)$. Indeed, since $f(0)=-v_*^2$ and $f(+\infty)=-\infty$, if $u^*$ is such that
\begin{equation*}
f(u^*)=\max_{\mathbb{R}^+}f, \quad f(u^*)>0,
\end{equation*} 
then the claim follows. By exploiting the conditions $f(u^*)>0$ and $f'(u^*)=0$, we end up  with \eqref{sigmaeq1}.

}
\end{remark}

\begin{lemma}\label{lemma0}
Let $\alpha$ be such that there exist two positive solutions $u_1<u_2$ to the equation $f(u)=0$. Hence, given $u_\pm >0$, the set $\mathcal A$ defined as
\begin{equation*}
\mathcal A := \{ \alpha >0 : u_1<u_-<u_+<u_2 \}
\end{equation*}
is such that $\mathcal A=[\bar \alpha,+\infty)$, for some $\bar \alpha>0$.
\end{lemma}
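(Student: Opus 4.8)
The plan is to analyze the function $f(u)=f(u;\alpha)=-P(u)u+\alpha u-v_*^2$ as $\alpha$ varies, exploiting the monotone dependence on $\alpha$. First I would fix the two positive roots $u_1(\alpha)<u_2(\alpha)$ of $f(u;\alpha)=0$ guaranteed by hypothesis, and observe that since $\partial_\alpha f(u;\alpha)=u>0$ for every $u>0$, increasing $\alpha$ shifts the graph of $f(\,\cdot\,;\alpha)$ upward. Because $f(0;\alpha)=-v_*^2<0$ and $f(+\infty;\alpha)=-\infty$ independently of $\alpha$, the smaller root $u_1(\alpha)$ is decreasing in $\alpha$ and the larger root $u_2(\alpha)$ is increasing in $\alpha$; this follows from the implicit function theorem together with the sign of $\partial_\alpha f$, using that $f'(u_1)>0$ and $f'(u_2)<0$ at the two simple roots (the roots are simple for $\alpha$ in the interior of the admissible range, since $f$ has a single interior maximum by the strict convexity of $P$, exactly as in \cite[Lemma 3.1, Lemma 3.2]{Str14}). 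Consequently the interval $(u_1(\alpha),u_2(\alpha))$ is nested increasingly in $\alpha$.

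Next I would translate the condition defining $\mathcal A$ into a condition purely on the boundary data. We need $u_1(\alpha)<u_-$ and $u_+<u_2(\alpha)$. Since $u_1$ is strictly decreasing and $u_2$ strictly increasing in $\alpha$ on the admissible range, each of these two inequalities is satisfied precisely for $\alpha$ larger than a threshold: $u_1(\alpha)<u_-$ holds iff $\alpha>\alpha^{(1)}$ where $\alpha^{(1)}$ is determined by $f(u_-;\alpha^{(1)})=0$, i.e. $\alpha^{(1)}=P(u_-)+v_*^2/u_-$; similarly $u_+<u_2(\alpha)$ holds iff $\alpha>\alpha^{(2)}$ with $\alpha^{(2)}=P(u_+)+v_*^2/u_+$, obtained from $f(u_+;\alpha^{(2)})=0$. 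One also needs $\alpha$ large enough that two positive roots exist at all, i.e. $\alpha\ge\alpha_{\min}$ where $\alpha_{\min}$ is the value at which the interior maximum of $f$ touches zero (equivalently, Remark \ref{R1} with equality). Setting $\bar\alpha:=\max\{\alpha_{\min},\alpha^{(1)},\alpha^{(2)}\}$, I would then check that for every $\alpha\ge\bar\alpha$ all three conditions hold simultaneously, so $u_1(\alpha)<u_-<u_+<u_2(\alpha)$ (the middle inequality $u_-<u_+$ being part of the standing data; if instead $u_+<u_-$ the argument is symmetric). Hence $\mathcal A\supseteq[\bar\alpha,+\infty)$, and since each defining inequality fails for $\alpha$ below its threshold, $\mathcal A=[\bar\alpha,+\infty)$ exactly. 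The fact that $\bar\alpha>0$ is immediate because $\alpha^{(1)},\alpha^{(2)}>0$.

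The main obstacle I anticipate is the careful monotonicity bookkeeping at the boundary of the admissible $\alpha$-range: one must be sure that as $\alpha$ ranges over $[\alpha_{\min},+\infty)$ the maps $\alpha\mapsto u_1(\alpha)$ and $\alpha\mapsto u_2(\alpha)$ are genuinely strictly monotone and continuous (not merely monotone for large $\alpha$), and that $u_1(\alpha)\to 0^+$ and $u_2(\alpha)\to+\infty$ as $\alpha\to+\infty$ so that eventually the window $(u_1,u_2)$ swallows the fixed pair $u_\pm$; both are consequences of the shape of $f$ established in \cite{Str14}, but they need to be invoked cleanly. A secondary subtlety is confirming that the three threshold conditions are compatible — i.e. that at $\alpha=\bar\alpha$ one really has two positive roots and not a degenerate configuration — which is handled by noting $\alpha^{(1)},\alpha^{(2)}\ge\alpha_{\min}$ would follow from $f(u_\pm;\alpha_{\min})\le 0$, true since $u_\pm\neq u^*(\alpha_{\min})$ forces $f(u_\pm;\alpha_{\min})<\max f=0$; if $\alpha^{(1)}$ or $\alpha^{(2)}$ happens to be below $\alpha_{\min}$ one simply takes $\bar\alpha=\alpha_{\min}$ and the corresponding inequality is automatically strict.
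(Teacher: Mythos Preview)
Your proposal is correct and matches the paper's strategy: the paper does not prove this lemma directly (it is quoted from \cite{Str14}), but the same argument is carried out in detail for the analogous Lemma~\ref{lemmaviscgen1}, where monotonicity of $f(u;\alpha)$ in $\alpha$ is used to show the roots nest, $u_1(\alpha)\to 0$ and $u_2(\alpha)\to+\infty$ as $\alpha\to+\infty$, and $\bar\alpha$ is taken as the maximum of the values at which $f(u_\pm;\alpha)=0$. One small clean-up: rather than treating ``$u_1(\alpha)<u_-$'' and ``$u_+<u_2(\alpha)$'' separately, it is tidier (and avoids the root-identification subtlety you flag at the end) to note directly that $u_1(\alpha)<u_\pm<u_2(\alpha)$ is equivalent to $f(u_\pm;\alpha)>0$, i.e.\ to $\alpha>\alpha^{(1)},\alpha^{(2)}$, which is exactly how the paper phrases it.
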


Lemmas \ref{lemma-1}-\ref{lemma0} assure that, once the boundary conditions $u_\pm$  are imposed, there always  exists a value for the integration constant $\alpha$ such that there exist two positive solutions $u_{1,2}$ to the equation $f(u)=0$  satisfying 
\begin{equation*}
(u_-,u_+) \subset (u_1,u_2).
\end{equation*}
This is of course a necessary condition for the existence of an increasing positive connection between  $u_-$ and $u_+$, as enlightened in Figure \ref{primafigura} in the specific example of $P(u)= \kappa u^2$ and $\nu(u)=u$.

\begin{figure}[ht]
\centering
\includegraphics[width=15cm,height=5.5cm]{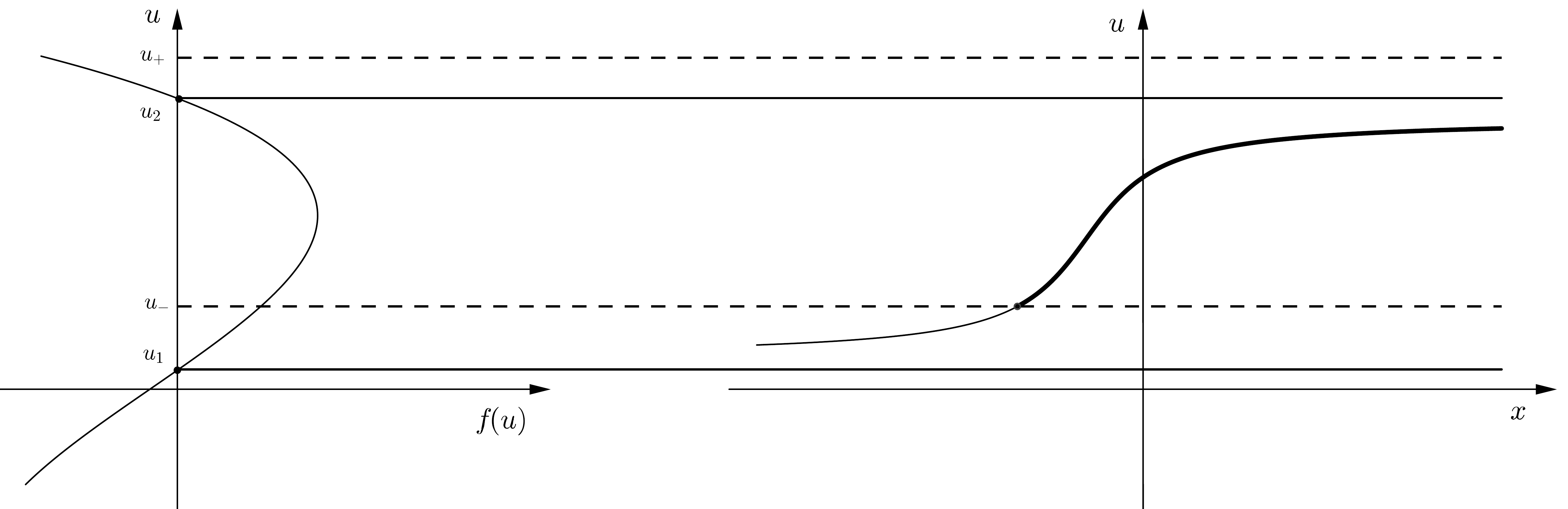}
\caption{\small{Plot of the solutions to $\varepsilon v_*  u_x =f(u)$. The choice for $u_-$ and $u_+$ is such that $u_+>u_2$. In the plane $(x,u)$ we can see that the solution starting from $u(-\ell)=u_-$ can not reach $u_+$, since $u_2$ is an equilibrium solution for the equation.  The same holds if $u_- < u_1$.
}}
\label{primafigura}
\end{figure}

\subsection{The stationary problem}

By taking advantage of the already known properties of the function $f(u)$, we now  study the function $w \mapsto g(w)$. We first notice that the function $f$ is increasing for $u \in [0,u^*)$ and decreasing for $u \in (u^*,+\infty)$, where $u^*$, implicitly defined as
\begin{equation*}
P(u^*)=\alpha-P'(u^*)u^*
\end{equation*}
is such that $f'(u^*)=0$. Moreover, as already stressed in Remark \ref{R1}, if $\alpha$ is such that $f(u^*)>0$, that is
\begin{equation*}
P'(u^*){u^*}^2 >{v_*}^2,
\end{equation*}
then there exist two positive solutions to the equation $f(u)=0$.
Given $\nu(u)>0$, since  $\Phi(u)>0$ and $\Phi'(u)>0$, we have
\begin{equation*}
\Phi^{-1}(w)>0, \quad (\Phi^{-1})'(w)=\frac{1}{\Phi'(u)}>0,
\end{equation*}
proving that $\Phi^{-1}$ is a positive increasing function as well. 

Let us now  consider $g(w)=(f \circ \Phi^{-1})(w)$; we prove the following lemma.

\begin{lemma}\label{nuovolemma}
Let $g(w)=(f \circ \Phi^{-1})(w)$, with $f$ defined in \eqref{definizionef}. For every $v_*>0$ there exist $w_1,w_2>0$ such that $g(w_{1})=g(w_2)=0$. Moreover,  the function $g$ in increasing in the interval $[0,w^*)$, and decreasing in the interval $(w^*,+\infty)$, being $w^*:= \Phi(u^*)$.
 
\end{lemma}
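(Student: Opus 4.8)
The plan is to reduce all statements about $g(w)=(f\circ\Phi^{-1})(w)$ to the already-established properties of $f$ (Lemmas \ref{lemma-1} and \ref{lemma0}, plus Remark \ref{R1}) by exploiting that $\Phi^{-1}:(0,\infty)\to(0,\infty)$ is a $C^1$ bijection which is strictly increasing, with $\Phi^{-1}(0^+)=0$ and $\Phi^{-1}(+\infty)=+\infty$ (the latter limits because $\Phi(u)=\int_0^u\nu(s)/s\,ds$ is strictly increasing from $0$, hence its inverse inherits the endpoint behaviour). The key observation is that composing with a strictly increasing bijection preserves both the location of zeros (up to relabelling via $\Phi$) and the monotonicity structure: if $f(u_i)=0$ for $i=1,2$, then setting $w_i:=\Phi(u_i)>0$ gives $g(w_i)=f(\Phi^{-1}(w_i))=f(u_i)=0$, and conversely every zero of $g$ arises this way.

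First I would invoke Lemma \ref{lemma-1} to fix, for the given $v_*>0$, a value of the integration constant $\alpha$ for which $f$ has two positive roots $u_1<u_2$; this is exactly the hypothesis needed. Then I set $w_1:=\Phi(u_1)$ and $w_2:=\Phi(u_2)$, note $0<w_1<w_2$ by strict monotonicity of $\Phi$, and conclude $g(w_1)=g(w_2)=0$. For the monotonicity claim, I would differentiate: $g'(w)=f'(\Phi^{-1}(w))\cdot(\Phi^{-1})'(w)$, and since $(\Phi^{-1})'(w)=1/\Phi'(u)=u/\nu(u)>0$ for $u=\Phi^{-1}(w)>0$, the sign of $g'(w)$ equals the sign of $f'(\Phi^{-1}(w))$. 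From the recalled properties of $f$ (increasing on $[0,u^*)$, decreasing on $(u^*,+\infty)$, where $u^*$ solves $f'(u)=0$), and using that $u\mapsto\Phi(u)$ maps $[0,u^*)$ onto $[0,w^*)$ and $(u^*,+\infty)$ onto $(w^*,+\infty)$ with $w^*:=\Phi(u^*)$, it follows that $g$ is increasing on $[0,w^*)$ and decreasing on $(w^*,+\infty)$.

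The argument is essentially a change-of-variables bookkeeping exercise, so there is no deep obstacle; the only point requiring a little care is the behaviour of $\Phi$ and $\Phi^{-1}$ at the endpoints $0$ and $+\infty$, which is needed to guarantee that $\Phi^{-1}$ is defined on all of $(0,\infty)$ (so that $w_1,w_2$ and $w^*$ genuinely lie in the domain) and that the intervals $[0,w^*)$ and $(w^*,+\infty)$ exhaust $(0,\infty)$. Since $\nu\in C^1(\R^+)$ with $\nu>0$, the integral $\Phi(u)=\int_0^u\nu(s)/s\,ds$ is well-defined and strictly increasing, but I would remark that its behaviour near $s=0$ (whether the integral converges) does not affect the conclusion as long as we regard $\Phi$ as a bijection onto its image $(\Phi(0^+),\Phi(+\infty))$ and restrict attention to $u>0$; in the regime of interest all relevant values $u_1,u_2,u^*,u_\pm$ are strictly positive, so $w_1,w_2,w^*$ are well inside the range of $\Phi$. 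With that caveat noted, the proof is complete.
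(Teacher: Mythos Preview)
Your proposal is correct and follows essentially the same approach as the paper: both arguments define $w_i=\Phi(u_i)$ from the zeros $u_i$ of $f$ supplied by Lemma~\ref{lemma-1}, and then use the chain rule $g'(w)=f'(\Phi^{-1}(w))(\Phi^{-1})'(w)$ together with $(\Phi^{-1})'>0$ to transfer the monotonicity of $f$ to $g$. Your added caveat about the endpoint behaviour of $\Phi$ near $0$ is a careful touch that the paper does not make explicit, but it does not alter the route.
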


\begin{proof}
Lemma \ref{lemma-1} assures the existence of two positive values $u_1$ and $u_2$ such that $f(u_1)=f(u_2)=0$ and, as a consequence, $w_1$ and $w_2$ has to be defined as
\begin{equation}\label{richiestasuphi}
\Phi^{-1}(w_1)=u_1 \quad \mbox{and} \quad  \Phi^{-1}(w_2)=u_2.
\end{equation}
Since $\Phi^{-1}(0)=0$ and $(\Phi^{-1})'>0$, there exist and they are unique $w_1$ and $w_2$ such that \eqref{richiestasuphi} holds. Hence,  $g(w)$ has exactly two positive zeros for all the choices of $\nu(u)>0$. Furthermore
\begin{equation*}
g'(w)=[f(\Phi^{-1}(w))]'=f'(\Phi^{-1}(w))\cdot(\Phi^{-1})'(w),
\end{equation*}
 so that the sign of $g'$ is univocally determined by the sign of $f'$. Therefore, if ${w^*}$ is such that
$\Phi^{-1}({w^*})=u^*$, then 
\begin{equation*}
g'({w^*})=0, \quad g'(w)>0 \ \textrm{for} \ w \in [0,{w^*}), \quad g'(w)<0 \ \textrm{for} \ w \in ({w^*},+\infty).
\end{equation*}

\end{proof}
We {finally} notice that  condition \eqref{sigmaeq1} for the existence of two positive solutions to the equation $f(u)=0$, also assures that $g(w)$ has to positive zeros. Indeed
\begin{equation*}
g'(w)=f'(\Phi^{-1}(w)) \cdot (\Phi^{-1})'(w)= \frac{f'(\Phi^{-1}(w))}{\Phi'(w)},
\end{equation*}
so that $g'(w)=0$ if and only if $w={w^*}$, where ${w^*}$ is such that $\Phi^{-1}({w^*})=u^*$. Furthermore, \begin{equation*}
g({w^*})>0 \ \Longleftrightarrow \ f(\Phi^{-1}({w^*}))>0 \ \Longleftrightarrow \  f(u^*)>0,
\end{equation*}
which is exactly \eqref{sigmaeq1}.

\begin{example}[The Saint-Venant system with density dependent viscosity]\rm{

When $P(u)= \frac{1}{2}\kappa u^2$, the stationary equation  \eqref{PLgenericostaz}  for $u$ reads
\begin{equation*}
 v_* \varepsilon [\Phi(u)]_x=-\frac{1}{2}\kappa u^3+\alpha u- v_*^2.
\end{equation*}
Let us consider the simplest case  $\nu(u)= Cu^\gamma$, $\gamma>0$, and let us plot the function $g(w)=(f \circ \Phi^{-1})(w)$. We have
\begin{equation*}
\Phi(u)=C\int^u_0 s^{\gamma-1}ds=\frac{C}{\gamma}u^{\gamma} \qquad \mbox{and}  \qquad \Phi^{-1}(w)=\left( \frac{\gamma}{C}u\right)^{\frac{1}{\gamma}},
\end{equation*}
so that 
\begin{equation*}
g(w)=-\frac{1}{2}\kappa\left(\frac{C}{\gamma}\right)^{\frac{3}{\gamma}}w^{\frac{3}{\gamma}}+\left(\frac{C}{\gamma}\right)^{\frac{1}{\gamma}}\alpha \, w^{\frac{1}{\gamma}}-v_*^2.
\end{equation*}

\begin{figure}[ht]
\centering
\includegraphics[width=14cm,height=6cm]{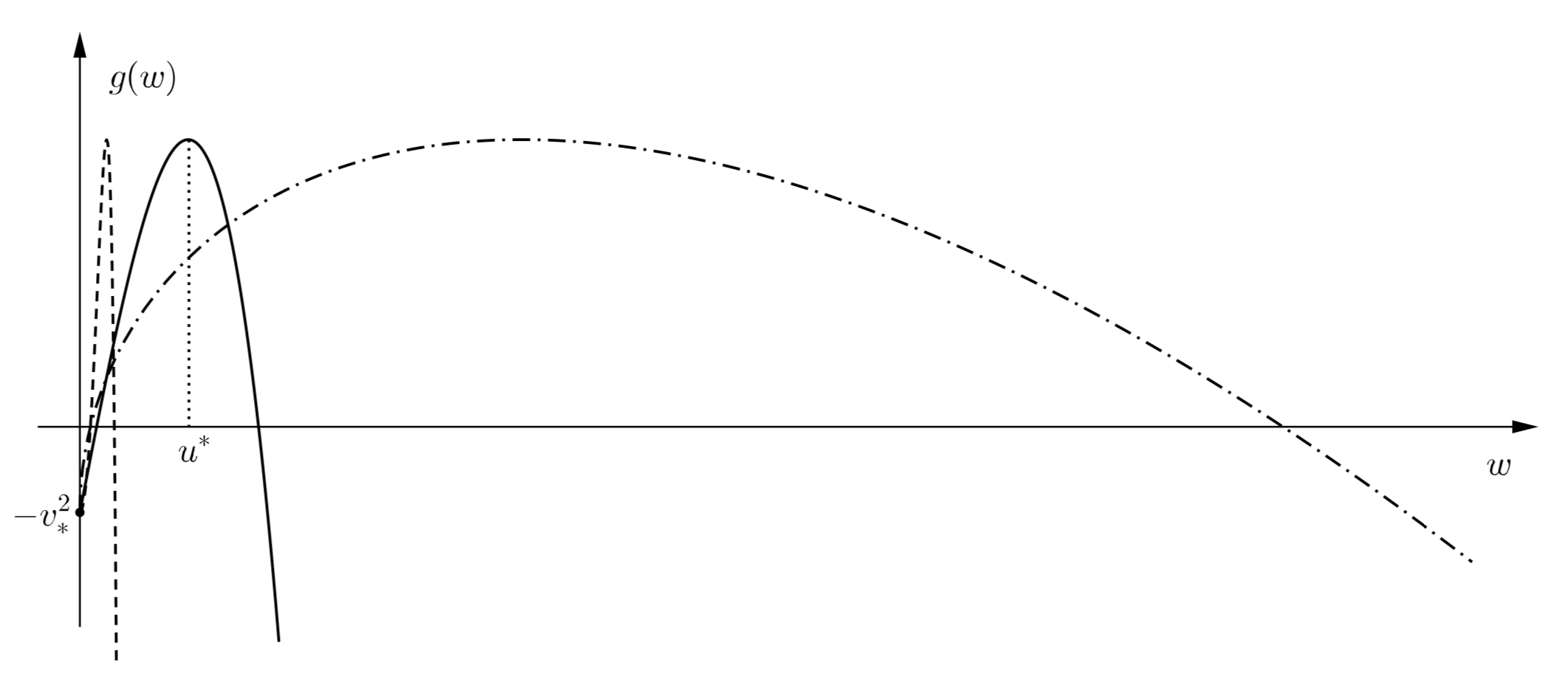}
\caption{\small{Plots of different $g(w)$ with $\kappa=1$, $\alpha=400$ and $ v_*^2=1000$. The dashed line plots $g(w)=-\frac{1}{2}w^6+400w^2-1000$, the dashed-point  line plots $g(w)=-\frac{1}{2}w^{3/2}+400\sqrt{w}-1000$, while the black line plots $f(w)=-\frac{1}{2}w^3-400w-1000$. }}
\label{figesempio}
\end{figure}
Figure \ref{figesempio} shows the plot of $g(w)$ for different choice of  $\nu(u)$, compared with the plot of  $f(w)$ (where $\nu(u)=u$); the dashed line and the dashed point line plot $g(w)$ with $\nu(u)=\frac{\sqrt s}{2}$ and $\nu(u)=2s^2$ respectively. As proved in Lemma \ref{nuovolemma}, we can see that the monotonicity properties of the function $g$ are preserved, as well as the existence of two positive zeros.
}

\end{example}

\subsection{Existence and uniqueness of a positive connection}
Let us go back to the problem of the  existence and uniqueness of the solution to the stationary problem \eqref{PLstaz}. As  already shown, once the boundary conditions for the function $v$ are imposed, problem \eqref{PLstaz} reads
\begin{equation*}
\left\{\begin{aligned}
&v=v_*, \\
&\varepsilon  v_* w_x = g(w), \quad w( \pm \ell)=\Phi(u_\pm)
\end{aligned}\right.
\end{equation*}
where $v_*= v(-\ell)$ and $g(w)=(f \circ \Phi^{-1})(w)$, being $f(u)=-P(u)u+\alpha u- v_*^2$.  

Hence, the equation for the variable $w:=\Phi(u)$ is an equation on the form 
\begin{equation*}
w'=g(w;\alpha), \quad w(\pm \ell)=w_\pm,
\end{equation*}
where $\alpha$ is an integration constant depending on the boundary data. Once the boundary conditions are imposed,
a \textit{positive connection} between $\Phi(u_-)$ and $\Phi(u_+)$ (i.e. a positive solution to $\varepsilon v_*  w_x=g(w)$ connecting $\Phi(u_-)$ and $\Phi(u_+)$) exists only if $$(\Phi(u_-),\Phi(u_+)) \subset (w_1,w_2),$$ being $w_1$ and $w_2$ such that $g(w_1)=g(w_2)=0$.

The following Lemma (to be compared with Lemma \ref{lemma0}) aims at showing some properties of the function $g(w;\alpha)$ as a function of $\alpha$; precisely, we describe how the distance between the two zeroes of the function changes with respect to this parameter.

\begin{lemma}\label{lemmaviscgen1}
Let  $g(w)= (f \circ \Phi^{-1})(w)$ with $f$ defined in \eqref{definizionef}, and let $\alpha$ be such that \eqref{sigmaeq1} holds, so that there exist two positive solutions $w_1<w_2$ to the equation $g(w)=0$. Given $u_\pm >0$, the set $\mathcal A$ defined as
\begin{equation*}
\mathcal A := \{ \alpha >0 : w_1<\Phi(u_-)<\Phi(u_+)<w_2 \},
\end{equation*}
is such that $\mathcal A=[\bar \alpha,+\infty)$, for some $\bar \alpha >0$.

\end{lemma}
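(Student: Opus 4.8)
The plan is to mimic the proof of Lemma \ref{lemma0} from \cite{Str14}, transporting the statement about the zeros of $f$ through the strictly increasing diffeomorphism $\Phi$. The key observation is that the two zeros $w_1<w_2$ of $g$ are exactly $w_i=\Phi(u_i)$, where $u_1<u_2$ are the two zeros of $f$ (this is precisely the content of Lemma \ref{nuovolemma}). Since $\Phi$ is strictly increasing, the chain of inequalities $w_1<\Phi(u_-)<\Phi(u_+)<w_2$ is equivalent to $u_1<u_-<u_+<u_2$. Hence the set $\mathcal A$ in the present Lemma coincides, as a subset of $\alpha$-values, with the set $\mathcal A$ of Lemma \ref{lemma0}, and we may simply invoke Lemma \ref{lemma0} to conclude that $\mathcal A=[\bar\alpha,+\infty)$ for some $\bar\alpha>0$.

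In a little more detail, I would proceed as follows. First, recall that by Remark \ref{R1} the hypothesis \eqref{sigmaeq1} guarantees $f(u^*;\alpha)>0$, hence the existence of exactly two positive zeros $u_1(\alpha)<u^*<u_2(\alpha)$ of $u\mapsto f(u;\alpha)$; by Lemma \ref{nuovolemma} the corresponding zeros of $g$ are $w_i(\alpha)=\Phi(u_i(\alpha))$, $i=1,2$. Second, fix the boundary data $u_\pm>0$ and set $w_\pm:=\Phi(u_\pm)$; using strict monotonicity of $\Phi$ and $\Phi^{-1}$, observe that for any admissible $\alpha$
\[
w_1(\alpha)<\Phi(u_-)<\Phi(u_+)<w_2(\alpha)\iff u_1(\alpha)<u_-<u_+<u_2(\alpha).
\]
Third, the right-hand condition is precisely the defining condition of the set $\mathcal A$ appearing in Lemma \ref{lemma0} (with the same $u_\pm$); therefore the two sets are literally equal. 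Fourth, apply Lemma \ref{lemma0} to obtain $\mathcal A=[\bar\alpha,+\infty)$ for some $\bar\alpha>0$, which is the assertion.

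Since the nontrivial structural work — monotonicity of $f$ in $u$, the location of its zeros, and above all the monotone dependence of the gap $u_2(\alpha)-u_1(\alpha)$ on $\alpha$ which yields the half-line structure of $\mathcal A$ — has already been carried out in \cite{Str14} and recalled here as Lemma \ref{lemma0}, there is essentially no obstacle left: the only thing to verify carefully is that passing to $w=\Phi(u)$ does not alter the set of $\alpha$'s, and this is immediate from strict monotonicity of $\Phi$. If one prefers a self-contained argument not relying on the equivalence of sets, the alternative is to redo the $\alpha$-monotonicity analysis directly for $g$: differentiate $g(w;\alpha)=f(\Phi^{-1}(w);\alpha)$ with respect to $\alpha$, note $\partial_\alpha g=\partial_\alpha f\circ\Phi^{-1}=\Phi^{-1}(w)>0$, so $g(\cdot;\alpha)$ is strictly increasing in $\alpha$ pointwise; then argue, exactly as in \cite{Str14}, that the smaller zero $w_1(\alpha)$ is nonincreasing and the larger zero $w_2(\alpha)$ is nondecreasing in $\alpha$, that $w_1\to 0$ and $w_2\to+\infty$ as $\alpha\to+\infty$, and hence that the condition $w_1(\alpha)<\Phi(u_-)<\Phi(u_+)<w_2(\alpha)$ holds on a set of the form $[\bar\alpha,+\infty)$. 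Either route is short; I would favor the first one, merely citing Lemma \ref{lemma0}.
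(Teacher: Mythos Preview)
Your argument is correct. The key observation that $\Phi$ is independent of $\alpha$ and strictly increasing, so that $w_1(\alpha)<\Phi(u_-)<\Phi(u_+)<w_2(\alpha)$ is equivalent to $u_1(\alpha)<u_-<u_+<u_2(\alpha)$, reduces the statement immediately to Lemma~\ref{lemma0}. This is valid and clean.

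The paper, however, does not take this shortcut: it works directly in the $w$ variable and essentially redoes the proof of Lemma~\ref{lemma0} for $g$. Concretely, it first shows $g(w;\alpha)$ is strictly increasing in $\alpha$ (via $f(u;\alpha)-f(u;\alpha')=(\alpha-\alpha')u>0$), which yields the upward-closedness of $\mathcal A$, and then establishes nonemptiness by proving $w_1(\alpha)\to 0$ and $w_2(\alpha)\to+\infty$ as $\alpha\to+\infty$ through explicit estimates (bounding $w_1$ above by $2v_*^2/\alpha$ and $w_2$ below by $\Phi(u^*)$). This is precisely the self-contained alternative you sketched at the end. Your first route is more economical; the paper's route has the minor advantage of being self-contained and of producing the explicit bounds on $w_1,w_2$ that are later used in defining the admissible region $\Sigma$.
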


\begin{proof}
Since $w_1=w_1(\alpha)$ and $w_2=w_2(\alpha)$,  we want to show that $g(w;\alpha)$ is an increasing function with respect to $\alpha$. Indeed, this would imply that, if there exists a value $\alpha$ such that 
$$w_1<w_-<w_+<w_2,$$
then, for all $\alpha'>\alpha$
$$w_1'<w_-<w_+<w_2',$$
being $w_1'$ and $w_2'$  the  two positive zeros of $g(w;\alpha')$.

Since $g(w;\alpha)=f(\phi^{-1}(w;\alpha))$ and $\Phi^{-1}(w)$ is an increasing function that does not depend on $\alpha$,  $g(w,\alpha)$ is an increasing function in the variable $\alpha$ if so it is for $f(u;\alpha)$. We have
\begin{equation*}
f(u;\alpha)-f(u;\alpha')=(\alpha-\alpha')u,
\end{equation*}
so that, since $u>0$,  $f(u,\alpha')-f(u,\alpha)>0$ when $\alpha'>\alpha$.

Thus, we only need to prove that there exist a value $\bar \alpha$ such that $w_1<\Phi(u_-)<\Phi(u_+)<w_2$. We know that $g(0)=-v_*^2 <0$ and $g'(w)>0$ for all $w \in [0,{w^*})$. Moreover 
\begin{equation*}
g({w^*})=f(\Phi^{-1}({w^*}))=f(u^*)>0,
\end{equation*}
so that $w_1 \in (0,{w^*})$. Furthermore, if we ask for
\begin{equation}\label{condizionecostantilemma2}
g\left( \frac{2v_*^2}{\alpha}\right)=f \left( \Phi^{-1}\left( \frac{2 v_*^2}{\alpha}\right) \right)>0
\end{equation}
we have $w_1<\frac{2v_*}{\alpha}$.  Condition \eqref{condizionecostantilemma2} can be rewritten as
\begin{equation*}
f \left( \Phi^{-1}\left( \frac{2 v_*^2}{\alpha}\right) \right)>f(u_1)=0,
\end{equation*}
that is, since $\Phi^{-1}(w^*)=u_1$
\begin{equation*}
f \left( \Phi^{-1}\left( \frac{2 v_*^2}{\alpha}\right) \right)>f\left( \Phi^{-1}\left( w_1\right)  \right).
\end{equation*}
Since $f$ and $\Phi^{-1}$ are increasing function in the interval $[0,u^*)$ and $[0,{w^*})$ respectively, we obtain the following condition for the constant $\alpha$ 
$$2 v_*^2/\alpha >w_1.$$
If this condition holds, then 
\begin{equation*}
0<w_1<\frac{2 v_*}{\alpha},
\end{equation*}
showing that $w_1 \to 0$ as $\alpha \to +\infty$. On the other hand we know that $u_2>u^*$ where $u^*$ is such that $f(u^*)=\max_\R f$. Hence
\begin{equation*}
\Phi^{-1}(w_2)>\Phi^{-1}({w^*}) \ \ \ \Rightarrow \ \ \  w_2 > \Phi(u^*).
\end{equation*}
Since $u^* \to + \infty$ as $\alpha \to +\infty$, and since $\Phi$ is an increasing and continuous function, we know that $\Phi(u^*) \to \Phi(+\infty)=+\infty$ as $\alpha \to + \infty$, implying $w_2 \to +\infty$ as $\alpha \to +\infty$.
\vskip0.2cm
We  have thus proved that, if we choose $\bar \alpha$ large enough, then $(\Phi(u_-),\Phi(u_+)) \subset (w_1,w_2)$ for every choice of $u_\pm >0$.
More precisely, $\bar \alpha$ is chosen in such a way that
\begin{equation*}
\bar \alpha>\max \{ \alpha^*, \alpha^{**} \},
\end{equation*}
where $\alpha^*$ and $\alpha^{**}$ are such that either $g(\Phi(u_-),\alpha^*)=0$ or $g(\Phi(u_+),\alpha^{**})=0$.

\end{proof}

\begin{ans}\rm{

We  define the region $\Sigma$ of admissible values $\alpha$ as the set of all the values $\alpha$ such that there exists two positive solutions to the equation $g(w)=0$ and Lemma \ref{lemmaviscgen1} holds. In the plane $\{ v^*,\alpha\}$, $\Sigma$ is determined  by the equations
\begin{equation*}
v_*^2 <P'(u^*){u^*}^2, \quad g(\Phi(u_\pm))>0, \quad \alpha< \frac{2v_*}{w_1}.
\end{equation*}
We recall that $u^*$ is such that $f'(u^*)=0$ and $v_*=v(\pm \ell)$.
}
\end{ans}

\begin{proposition}\label{propviscgen1}

The region $\Sigma$ is the {\bf epigraph} of an increasing function $h: \R \to \R$, i.e.
\begin{equation*}
\Sigma := {\bf epi}(h)= \{ (v_*,\alpha) : v_* \in \R, \alpha \in \R, \alpha \geq h(x) \} \subset \R \times \R.
\end{equation*}
 
 \end{proposition}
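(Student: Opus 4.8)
The plan is to show that $\Sigma$ is closed under vertical upward translation and that it is "connected in the $v_*$-direction" in the right way, which together with the monotonicity already encoded in Lemma \ref{lemmaviscgen1} forces $\Sigma$ to be the epigraph of a function $h$. Concretely, fix $v_*$ and consider the slice $\Sigma_{v_*} := \{\alpha : (v_*,\alpha) \in \Sigma\}$. By definition $\Sigma_{v_*}$ is the set of $\alpha$ satisfying the three conditions $v_*^2 < P'(u^*){u^*}^2$, $g(\Phi(u_\pm)) > 0$, and $\alpha < 2v_*/w_1$; I would first re-examine these and observe that the relevant content is exactly that $\alpha$ be large enough so that the two zeros $w_1(\alpha) < w_2(\alpha)$ exist and bracket $\Phi(u_-) < \Phi(u_+)$. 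But that is precisely the set $\mathcal{A}$ of Lemma \ref{lemmaviscgen1}, which was shown to equal $[\bar\alpha(v_*), +\infty)$. Hence each nonempty slice $\Sigma_{v_*}$ is a half-line $[h(v_*), +\infty)$, and setting $h(v_*) := \bar\alpha(v_*)$ (with $h(v_*) = +\infty$, or the domain restricted, where the slice is empty) realizes $\Sigma$ as the epigraph of $h$.

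It then remains to prove that $h$ is increasing. For this I would track how the defining thresholds move with $v_*$. The key point is that $f(u;\alpha) = -P(u)u + \alpha u - v_*^2$ is \emph{decreasing} in $v_*^2$, so increasing $v_*$ shifts the graph of $f$ (hence of $g = f \circ \Phi^{-1}$) uniformly downward. Consequently, to keep $g(w^*) = f(u^*) > 0$ and to keep the two zeros bracketing $\Phi(u_\pm)$, one must compensate by increasing $\alpha$. Making this precise: if $(v_*,\alpha) \in \Sigma$ and $v_*' > v_*$, I want $\alpha' \geq \alpha$ with $(v_*',\alpha') \in \Sigma$; taking $\alpha'$ so that $\alpha' - \alpha \geq (v_*'^2 - v_*^2)/u$ pointwise on the relevant $u$-range restores all the inequalities, because the two perturbations enter $f$ with opposite sign and comparable weight $u > 0$. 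Since $u$ ranges over the bounded interval $[\Phi^{-1}(w_1), \Phi^{-1}(w_2)]$ (or more simply $[u_-,u_+]$ for the bracketing conditions), such a finite $\alpha'$ exists, giving $\bar\alpha(v_*') \leq \alpha'$; letting $\alpha \downarrow \bar\alpha(v_*)$ yields $h(v_*') \leq$ something finite, and a symmetric argument using $f'(u^*) = 0$ for the critical-point condition shows $h$ cannot decrease, i.e. $h(v_*') \geq h(v_*)$ fails — rather one shows directly $h$ is monotone by comparing the threshold equations $g(\Phi(u_\pm);\bar\alpha) = 0$, which define $\bar\alpha$ implicitly as an increasing function of $v_*^2$ via the implicit function theorem (the $\alpha$-derivative of $g(\Phi(u_\pm);\alpha)$ is $\Phi(u_\pm)\,(\Phi^{-1})' > 0$, the $v_*$-derivative is $-2v_* < 0$, so $d\bar\alpha/dv_* > 0$).

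The main obstacle I anticipate is bookkeeping rather than conceptual: the region $\Sigma$ is cut out by \emph{three} inequalities, and one must check that the binding constraint at the lower boundary of each vertical slice is one that moves monotonically with $v_*$, and that the other two constraints do not create a non-monotone or disconnected boundary. In particular the constraint $\alpha < 2v_*/w_1$ is an \emph{upper} bound on $\alpha$, which a priori could chop the half-line $[\bar\alpha,\infty)$ down to a bounded interval and destroy the epigraph structure; so a necessary step is to verify (as the proof of Lemma \ref{lemmaviscgen1} in fact does, by showing $w_1 \to 0$ as $\alpha \to +\infty$) that this upper constraint is automatically satisfied for all large $\alpha$ and hence is never binding at the boundary. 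Once that is checked, the slice is genuinely a half-line, the implicit-function-theorem computation pins down $h' > 0$, and continuity of $h$ (if needed) follows from continuity of the defining relation in $(v_*,\alpha)$.
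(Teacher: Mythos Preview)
Your approach and the paper's differ in structure. The paper proceeds by analyzing each of the three defining boundary curves of $\Sigma$ directly as graphs in the $(v_*,\alpha)$-plane: it sets $\varphi(\alpha):=\sqrt{P'(u^*)}\,u^*$ and checks $\varphi'(\alpha)>0$ for the first constraint; it rewrites $g(\Phi(u_\pm))>0$ as $f(u_\pm)>0$, i.e.\ $\alpha> v_*^2/u_\pm + P(u_\pm)$, which are explicit upward parabolas in $v_*$; and for the third it shows $\Psi(\alpha)=2v_*/w_1(\alpha)$ satisfies $\Psi'(\alpha)>0$ using $w_1'(\alpha)<0$. The conclusion is that $h$, being assembled from increasing curves, is increasing. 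Your route via Lemma~\ref{lemmaviscgen1} for the half-line slice structure, followed by the implicit function theorem on the threshold equations, is a legitimate alternative and in fact recovers the same parabola computation in disguise; it is a bit more circuitous, and your implicit-derivative bookkeeping is slightly off (since $g(\Phi(u_\pm);\alpha)=f(u_\pm;\alpha)$, the $\alpha$-derivative is simply $u_\pm$, not $\Phi(u_\pm)\,(\Phi^{-1})'$), though the sign and hence the conclusion are unaffected.

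Your middle paragraph is the weak spot: you start a comparison argument, abandon it mid-sentence, and restart with the IFT argument; this should be streamlined to go directly to the IFT. More substantively, you are right to flag the constraint $\alpha<2v_*/w_1$ as the delicate one, since it is formally an \emph{upper} bound and could in principle truncate the vertical slices. The paper does not treat this as a non-issue either: it computes $\Psi'(\alpha)>0$ and folds the corresponding curve into the ``matching of increasing functions,'' rather than arguing (as you propose) that the constraint becomes vacuous for large $\alpha$. Your observation that $w_1\to 0$ as $\alpha\to\infty$ makes $2v_*/w_1\to\infty$ is the cleanest way to see that this upper bound does not cut the half-line, and would strengthen either argument.
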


 \begin{proof}
 
Setting $\varphi(\alpha):= \sqrt{P'(u^*)}u^*$, we have
\begin{equation*}
\lim_{\alpha \to 0}\varphi(\alpha)=0, \quad \lim_{\alpha \to +\infty} \varphi(\alpha)=+\infty, \quad \varphi'(\alpha)>0,
\end{equation*}
meaning that $v_*=\varphi(\alpha)$ is an increasing function in the plane $(v_*,\alpha)$. Moreover, the condition $g(\Phi(u_\pm))>0$ is equivalent to
 \begin{equation*}
g(\Phi(u_\pm))= (f\circ \Phi^{-1})(\Phi(u_\pm))=f(u_\pm)>0,
\end{equation*}
and we get
 \begin{equation*}
 \alpha>\frac{1}{u_-}v_*^2+P(u_-), \quad  \alpha>\frac{1}{u_+}v_*^2+P(u_+)
 \end{equation*}
whose equality defines two parabolas. Finally, the function $\Psi(\alpha)=\frac{2v_*}{w_1(\alpha)}$ is such that
\begin{equation*}
\lim_{\alpha \to +\infty}\Psi(\alpha)=+\infty \quad \mbox{and} \quad \Psi'(\alpha)=-\frac{2v^*}{w_1^2} w'_1 >0,
\end{equation*}
since $w_1(\alpha)$ is a decreasing function. Hence  $\frac{dh}{dv_*}>0$, since $h$  is obtained by matching increasing functions.
  
 \end{proof}

We now prove the existence of a \textit{$2\ell$-connection}, i.e. we prove the existence of a solution to 
$$\varepsilon v_*  w_x=g(w), \quad w(\pm \ell)=\Phi(u_\pm),$$
satisfying
\begin{equation*}
2\ell=\varepsilon  v_* \int_{\Phi(u_-)}^{\Phi(u_+)} \frac{dw}{(f\circ \Phi^{-1})(w)} := G(\alpha).
\end{equation*}
We first notice that $G\big|_{\partial \Sigma}=+\infty$. From the study of $G(\alpha)$, we can prove that there  exists a unique value $\alpha^*$ such that $G(\alpha^*)=2\ell$. Indeed, we can easily see that
\begin{equation*}
\begin{aligned}
\lim_{\alpha \to + \infty} G(\alpha)=0, \quad \lim_{\alpha \to \bar \alpha} G(\alpha)= +\infty \quad  \mbox{and} \quad \frac{dG}{d\alpha} <0.
\end{aligned}
\end{equation*}
for  $\bar \alpha \in \partial\Sigma $ and for all $\alpha >0$.

\vskip1cm
 We are finally able to prove Theorem \ref{teointro}, which we recall here for completeness.
 
\begin{theorem}\label{teo2}
Given $\ell>0$ and $u_\pm, v_->0$, let us consider the following problem
\begin{equation}
 \left\{\begin{aligned}\label{PLgenericoTeo}
 &u_t +  v_x=0 &\qquad &x \in (-\ell,\ell), \ t \geq 0 \\
&  v_t+\left\{ \frac{v^2}{u}+ P(u)-\varepsilon \, \nu(u) \left( \frac{v}{u}\right)_x\right\}_x=0   \\
& u(\pm \ell,t)= u_\pm, \quad v(- \ell, t)=v_-, &\qquad &t \geq 0 \\
& u(x,0)=u_0(x), \quad v(x,0)=v_0(x) &\qquad &x \in (-\ell,\ell).
\end{aligned}\right.
\end{equation}
where $P(u)$ and $\nu(u)$ satisfy hypotesis {\bf{H1}}. If  $u_\pm$ and $v_*$ verify
$${\bf H2.} \quad v_*^2 \, (u_+-u_-)= u_- \,u_+ \, [P(u_+)-P(u_-)] \quad {\rm and} \quad  \left[\!\!\left[ \frac{v^3}{2u^2}+v f'(u)  \right]\!\!\right] \leq 0,
$$
being $f(u):= u \, \int P(z)/z^2 \, dz$ and  $v^* = v_->0$, then there exists a unique stationary solution $(\bar u(x), \bar v(x))$ to \eqref{PLgenericoTeo}.

\end{theorem}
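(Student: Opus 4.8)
The plan is to assemble the lemmas of Section 3 into an existence-and-uniqueness statement for the $2\ell$-connection, and then translate back to the original variables. First I would fix the boundary data $u_\pm, v_- > 0$ and set $v_* := v_-$. Assumption {\bf H2} contains the Rankine--Hugoniot identity $v_*^2(u_+-u_-) = u_-u_+[P(u_+)-P(u_-)]$, which is exactly the compatibility condition derived in Section 2 as the hyperbolic limit $\e = 0$; this guarantees that the pair $(u_-, u_+)$ is admissible as endpoints of a stationary discontinuity, while the entropy inequality $\ldbrack v^3/(2u^2) + v f'(u) \rdbrack \le 0$ selects the \emph{increasing} connection $u_- < u_+$ (arguing as in the Example with the hydraulic jump). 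So the first step is to record that, under {\bf H2}, we are in the regime $u_- < u_+$ and looking for a monotone increasing profile.

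The second step is to reduce, exactly as in Subsection 3.2, the stationary system \eqref{PLgenericoTeo} to the scalar autonomous equation $\e v_* w_x = g(w;\alpha)$ with $w(\pm\ell) = \Phi(u_\pm)$, where $w = \Phi(u)$, $\Phi(u) = \int_0^u \nu(s)/s\, ds$, $g = f\circ\Phi^{-1}$ and $f(u) = -P(u)u + \alpha u - v_*^2$. Then I would invoke Lemma \ref{nuovolemma} to get the shape of $g$ (two positive zeros $w_1 < w_2$, increasing on $[0,w^*)$, decreasing on $(w^*,+\infty)$), Lemma \ref{lemmaviscgen1} to show that the set $\mathcal A$ of parameters $\alpha$ with $w_1 < \Phi(u_-) < \Phi(u_+) < w_2$ is a half-line $[\bar\alpha, +\infty)$, and Proposition \ref{propviscgen1} to identify the admissible region $\Sigma$ in the $(v_*,\alpha)$-plane as the epigraph of an increasing function. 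For each $\alpha$ in the interior of $\Sigma$, since $g > 0$ on the open interval $(w_1,w_2) \supset (\Phi(u_-),\Phi(u_+))$ and $v_* > 0$, the equation $\e v_* w_x = g(w)$ admits a strictly increasing solution through any interior point, and separating variables the length of the interval on which it connects $\Phi(u_-)$ to $\Phi(u_+)$ is precisely
\begin{equation*}
G(\alpha) = \e v_* \int_{\Phi(u_-)}^{\Phi(u_+)} \frac{dw}{(f\circ\Phi^{-1})(w)}.
\end{equation*}
A connection on the prescribed interval $(-\ell,\ell)$ exists if and only if $G(\alpha) = 2\ell$.

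The third step is the analysis of $G$: using $G\big|_{\partial\Sigma} = +\infty$, $\lim_{\alpha\to+\infty} G(\alpha) = 0$ and $dG/d\alpha < 0$ on the interior of $\Sigma$ (all recorded just before the statement of the theorem), strict monotonicity and continuity of $G$ together with the intermediate value theorem yield a \emph{unique} $\alpha^* \in (\bar\alpha, +\infty)$ with $G(\alpha^*) = 2\ell$. This $\alpha^*$ determines a unique increasing solution $\bar w$ of $\e v_* \bar w_x = g(\bar w;\alpha^*)$ with $\bar w(\pm\ell) = \Phi(u_\pm)$; setting $\bar u := \Phi^{-1}(\bar w)$ (well defined and positive since $\Phi$ is a positive increasing bijection onto its range) and $\bar v := v_*$ gives a stationary solution of \eqref{PLgenericoTeo}. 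Conversely, any time-independent solution $(\bar u,\bar v)$ must have $\bar v \equiv v_*$ from the first equation and the boundary datum, must solve the scalar ODE for some integration constant $\alpha$, and must satisfy $G(\alpha) = 2\ell$, forcing $\alpha = \alpha^*$ and hence $\bar u = \Phi^{-1}(\bar w)$; this gives uniqueness. I expect the main obstacle to be the careful verification that the smallness-free {\bf H2} actually places the parameter produced here inside the interior of $\Sigma$ rather than on its boundary — i.e. that the entropy inequality plus Rankine--Hugoniot condition are compatible with the strict inclusion $(\Phi(u_-),\Phi(u_+)) \subset (w_1,w_2)$ needed for $G(\alpha^*) < +\infty$ — so that the edge case $G \equiv +\infty$ is genuinely excluded; this is where one leans hardest on Lemmas \ref{lemma-1}--\ref{lemmaviscgen1} and the epigraph structure of $\Sigma$.
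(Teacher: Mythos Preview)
Your proposal is correct and follows essentially the same approach as the paper's own proof: reduce to $\bar v \equiv v_*$, pass to the scalar autonomous equation for $w = \Phi(u)$, invoke Lemmas \ref{nuovolemma}--\ref{lemmaviscgen1} and Proposition \ref{propviscgen1} to get the structure of $\Sigma$, and then use the monotonicity and limiting behavior of $G(\alpha)$ to extract a unique $\alpha^*$ with $G(\alpha^*) = 2\ell$, finishing by inverting $\Phi$. Your write-up is in fact more explicit than the paper's (which is a terse assembly of the lemmas), particularly in spelling out the role of the entropy inequality in selecting $u_- < u_+$ and in arguing the converse direction for uniqueness; the concern you flag about {\bf H2} placing the parameter in the interior of $\Sigma$ is not addressed any more carefully in the paper than in your outline.
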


\begin{proof}
As already mentioned, the second component of the steady state is univocally determined once the boundary conditions are imposed, that is $\bar v(x)\equiv v_*$.

Going further, once $v_*$ is given, Lemma \ref{lemmaviscgen1} assures that, for any choice of $u_\pm$, there exists at least a value $\alpha$ such that $(v_*,\alpha) \in \Sigma$ and $w_1<\Phi(u_-)<\Phi(u_+)<w_2$, so that there exists a \textit{positive connection} satisfying the boundary conditions. Moreover, from the study of the function $G(\alpha)$, we know that there exists a unique value $\alpha^*$ such that $(v_*,\alpha^*) \in \Sigma$ and $G(\alpha^*)=2\ell$, so that there exists a unique \textit{positive connection} $\bar w(x)$ between $\Phi(u_-)$ and $\Phi(u_+)$ of \textquotedblleft length" $2\ell$. Since $\Phi$ is invertible, $\bar u(x):=\Phi^{-1}(\bar w)$ is the unique positive connection between $u_-$ and $u_+$.

\end{proof}

\subsection{The Saint-Venant system}{\rm

An interesting case where we can explicitly develop computations is the Saint-Venant system, already studied in \cite{Str14}; here the term of pressure $P(u)$ is given by the quadratic formula $P(u)=\frac{1}{2}\kappa u^2$, $\kappa>0$ and the viscosity $\nu(u)=u$.

In this case, stationary solutions solve
\begin{equation*}
v= v_* \qquad \mbox{and} \qquad \varepsilon  v_*   u_x=-\frac{1}{2}\kappa u^3+\alpha u- v_*^2:=f(u),
\end{equation*}
where, as usual, $v_*=v_-$. The condition \eqref{sigmaeq1}
for the existence of two positive solution $u_1$ and $u_2$ enlightened in Remark \ref{R1} reads $\alpha^3 > 27/8\,\kappa\, v_*^4$ (which is exactly the Cardano condition for the existence of three real solutions to third order equations  in the form $u^3+pu+q=0$).
Moreover, since $f(0)=-v_*^2$ and $\alpha>0$, we can explicitly show that $u_0<0<u_1<u_2$, where $u_0$ is the third (negative) root of the equation $f(u)=0$.

Figure \ref{secondafigura} plots the function $f(u)$ for different choices of the constant $\alpha$. The picture explicitly shows how the first positive zero $u_1$ remains close to zero while  $u_2$ becomes bigger as $\alpha \to +\infty$. Figure \ref{secondafigura} also shows that the interval $(u_-,u_+)$ is included or not inside $(u_1,u_2)$, depending on the choice of $\alpha$. In Figure \ref{terzafigura} we plot on the phase plane the solution to the equation $\varepsilon v_* \, u_x =f(u)$, which is known to exist once the boundary   values are chosen so that $(u_-,u_+) \subset (u_1,u_2)$. Moreover $u_1$ and $u_2$, being zeros of the function $f(u)$, are equilibria for the equation.

\begin{figure}[ht]
\centering
\includegraphics[width=12cm,height=6cm]{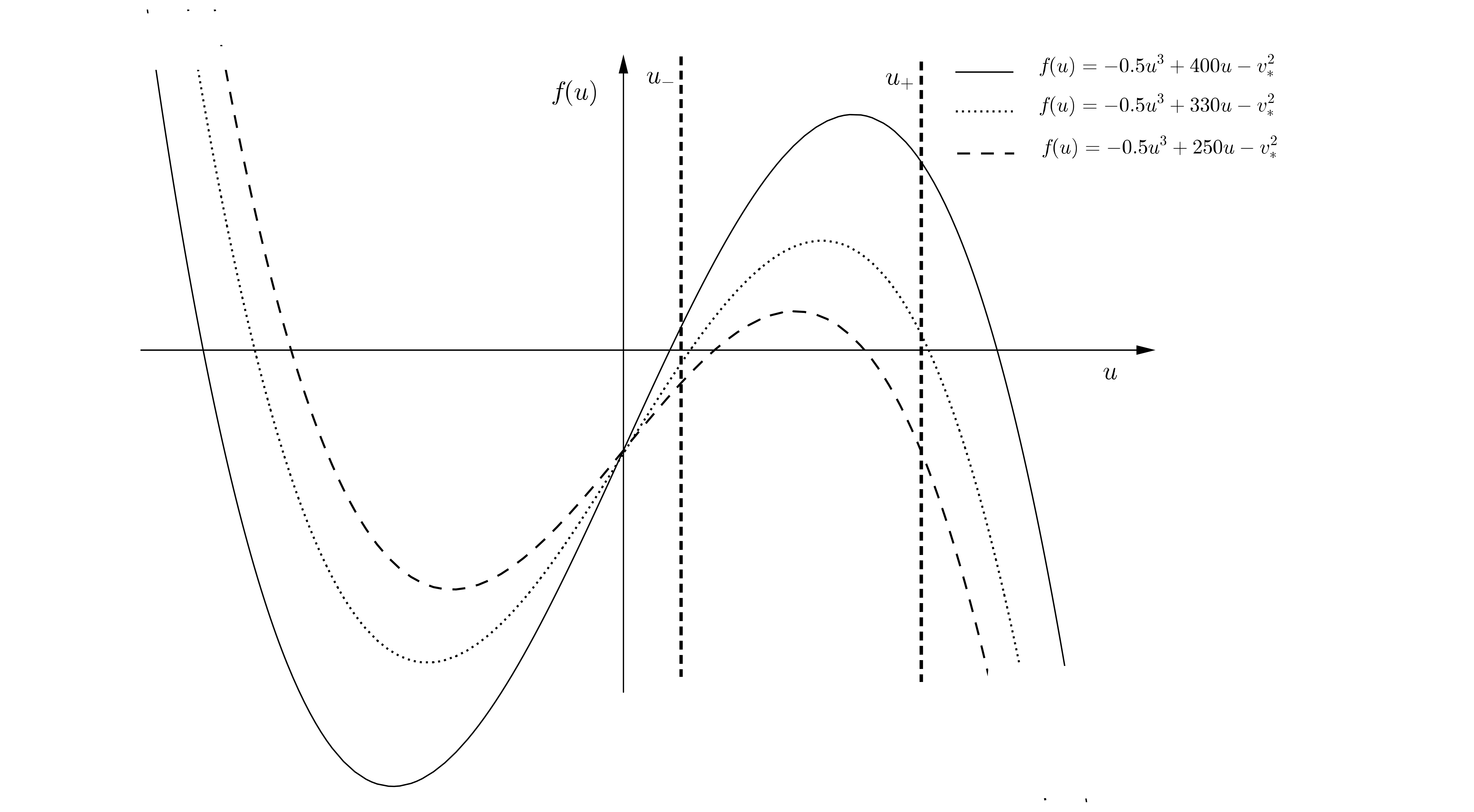}
\caption{\small{Plot of $f(u)=-\frac{1}{2}\kappa u^3+\alpha \,u- v_*^2$ for fixed $ v_*$ and multiple choices of $\alpha$.}}
\label{secondafigura}
\end{figure}

\begin{figure}[ht]
\centering
\includegraphics[width=14cm,height=6cm]{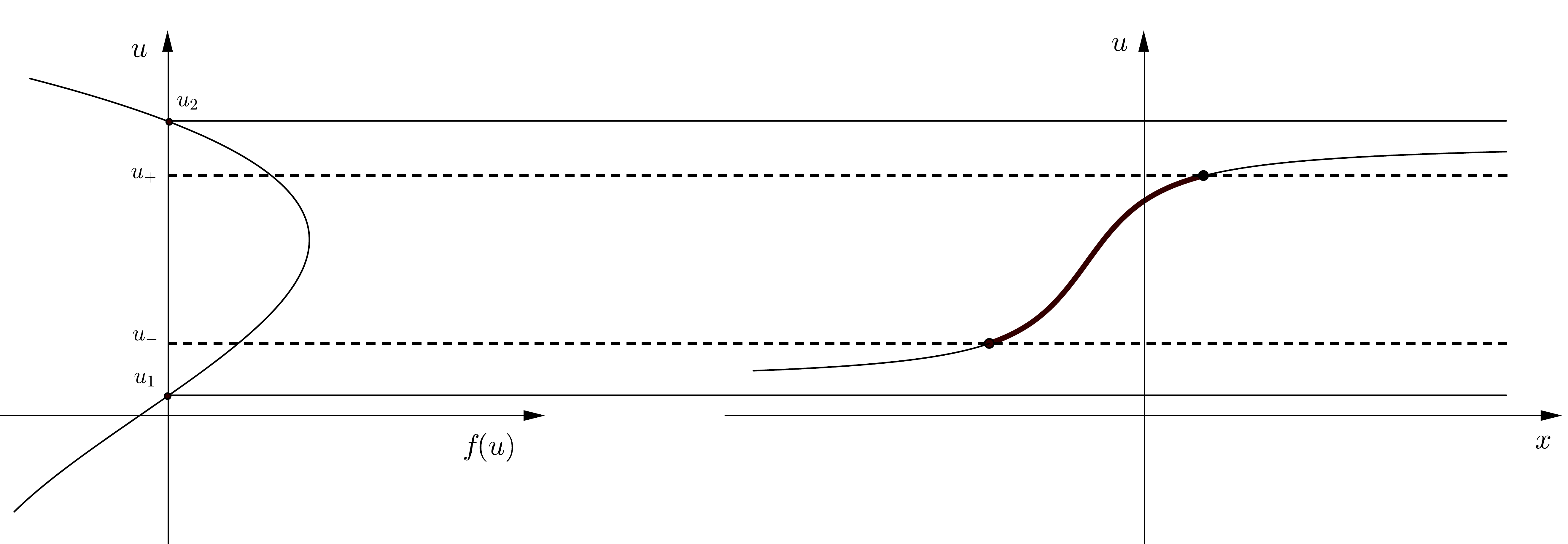}
\caption{\small{Plot of the solutions to $\varepsilon v_* u_x =f(u)$. In this case $(u_-,u_+)\subset(u_1,u_2)$, so that there exists a positive connection between $u_-$ and $u_+$. }}
\label{terzafigura}
\end{figure}

}

\section{Stability properties of the steady state}

In this Section we  study the stability properties of the unique steady state $(\bar u,\bar v)$ to the Navier-Stokes equations
\begin{equation}
 \left\{\begin{aligned}\label{NS}
 & u_t +  v_x=0 &\qquad &x \in (-\ell,\ell), \ t \geq 0 \\
& v_t+\left\{ \frac{v^2}{u}+ P(u)-\varepsilon \, \nu(u) \left( \frac{v}{u}\right)_x\right\}_x=0   \\
& u(\pm \ell,t)= u_\pm, \quad v(- \ell, t)=v_-, &\qquad &t \geq 0, \\
& u(x,0)=u_0(x), \quad v(x,0)=v_0(x) &\qquad &x \in (-\ell,\ell),
\end{aligned}\right.
\end{equation}
which is known to exist and to be unique thanks to Theorem \ref{teo2}. As stated in the Introduction, the key tool we are going to use is  the construction of a Lyapunov functional for \eqref{NS}, and the subsequent use of a Lyapunov type stability theorem; a similar strategy has been already used in \cite{KNZ}, where the authors prove asymptotic stability for the steady state of the Navier-Stokes system in the half line. We here prove stability in the sense of Definition \ref{defintro}, and our goal is to prove Theorem \ref{teointro2}, providing an estimate on the $L^2$-norm of the difference $(u,v)-(\bar u,\bar v)$, being $(u,v)$ the solution to \eqref{NS}.

\vskip0.2cm
Before going through the explicit construction of the Lyapunov functional and the computation of its time derivative, let us recall the additional hypothesis needed, as well as some useful observations on the behavior of the derivative of the solution $(u,v)$ at the boundary.

\subsubsection*{\bf Hypotheses:} As stated in the Introduction, in order to prove the stability of the steady state  we need to require a smallness assumption on the value of the density $u$ and its derivative at the boundary; precisely
we require the boundary data $u(\pm \ell,t)=u_\pm$ and $u_x(\pm\ell,t)$ to satisfy the following  condition:
\vskip0.2cm
{\bf H3.} There exist  positive constants $ \delta_1, \delta_2$ such that 
$$|u_+-u_-| < \delta_1 \qquad \mbox{and} \qquad |u_x(\ell,t)-u_x(-\ell,t)| < \delta_2,$$
for all $t \geq 0$.
\vskip0.2cm
\noindent As already remarked,  similar requests as the one in {\bf H3}, providing a smallness condition on the density $u$, have already  been stated in \cite{MN, ZZZ} in order to have stability of the steady state for  a Navier-Stokes system in the half line.

\subsubsection*{\bf Behavior at the boundary of the derivative of the time dependent solution:} Let us notice that from the first equation in \eqref{NS} and since $u$ satisfies Dirichlet boundary conditions, it follows
\begin{equation}\label{neuman}
v_x(\pm \ell,t)=0, \qquad \forall \, t \geq 0, 
\end{equation}
that is, $v$ satisfies Neumann boundary conditions. Such property will be used when computing the sign of the time derivative of the Lyapunov functional along the solution, and it
 can be also observed  by numerically computing the solution $(u,v)$ to \eqref{NS}, as done in  Figure \ref{ipoL}. 
 
 The initial datum for $v$ is the constant function $v_0=v_*$; the time dependent solution has always zero derivative at the boundary and, after developing into a non-constant solution, we can see its convergence towards the stable steady state $\bar v=v_*$.

As concerning $u$,  we choose as initial datum the increasing function  $u_0=0.25\tanh(20x)+0.75$; in this case $u_+=1$ and $u_-=0.5$, so that the first request in assumption ${\bf H3}$ is satisfied for all $\delta_1>0.5$. Indeed, as already remarked,  if the initial datum for the density  satisfies $u_- \leq u_0(x) \leq u_+$ for all $x \in [-\ell, \ell]$, then this property is preserved by the dynamics  (see Lemma \ref{A2} and the subsequent Remark). Also, one can see how the derivative at the boundary remain bounded, so that there exists $\delta_2>0$ such that the second assumption in ${\bf H3}$ is satisfied. 
Finally,
we notice that the solution itself remain increasing  for all $x \in [-\ell,\ell]$ and for all $t > 0$.
\begin{figure}[ht]
\centering
\includegraphics[width=7.3cm,height=5.7cm]{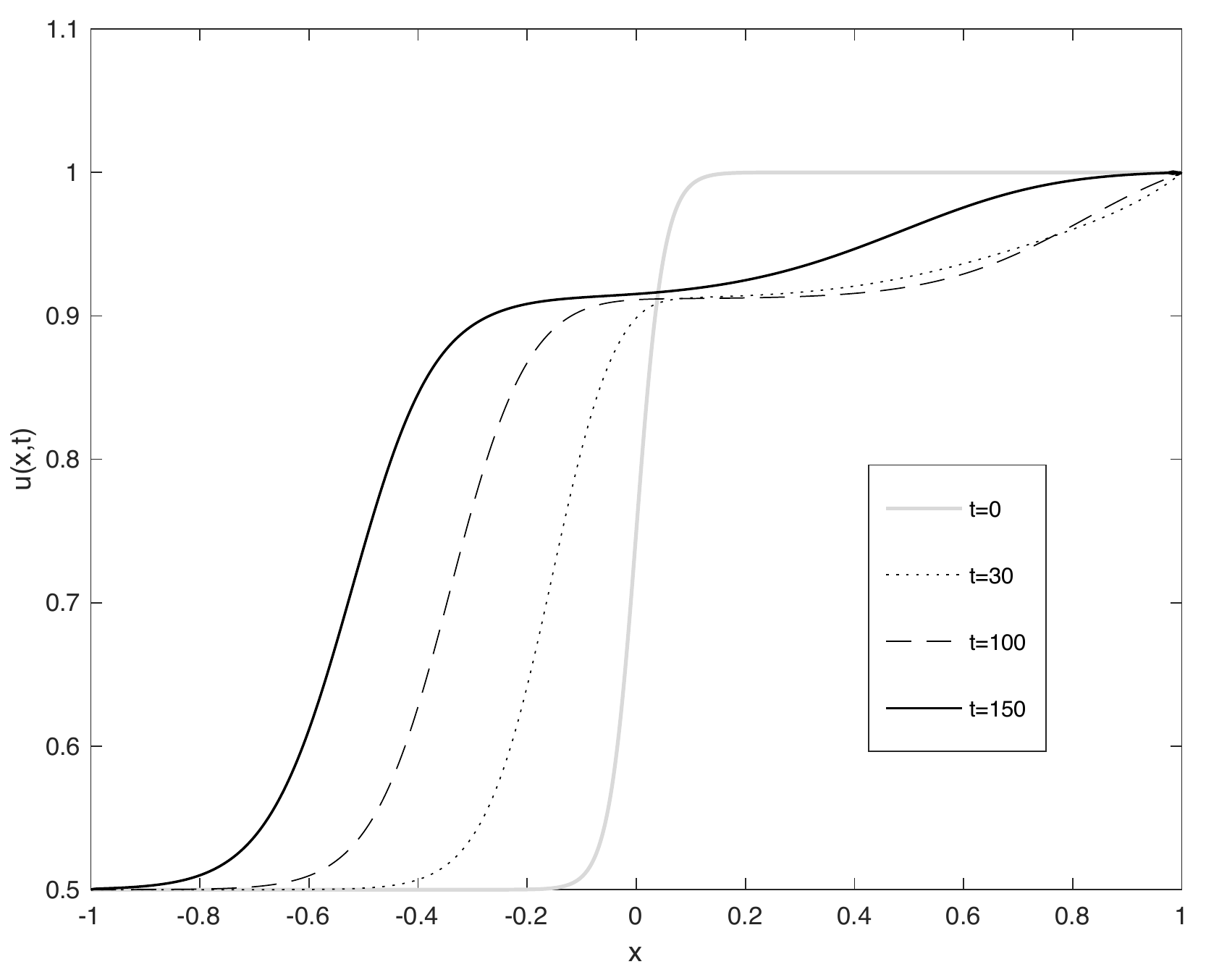}
\includegraphics[width=7.3cm,height=5.7cm]{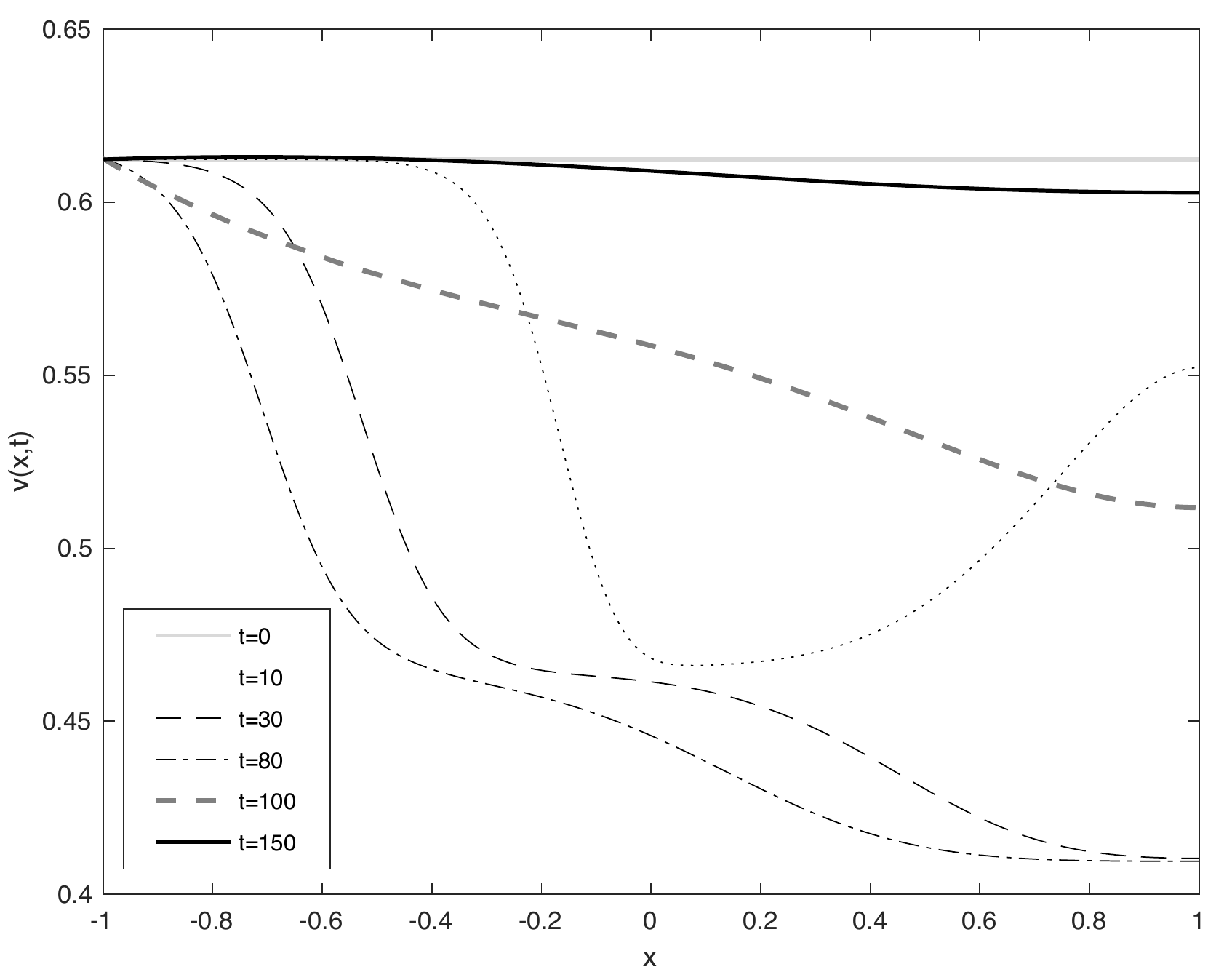}
\caption{\small{The dynamics of the solution to \eqref{EqTeo} in the interval $[-1,1]$ with  $\e=0.1$, $P(u)=u^2/2$ and $\nu(u)=u$. The initial datum $u_0$ is an increasing function connecting $u_-=0.5$ and $u_+=1$, while $v_0$ is such that $v_0(-\ell)=v_*$, with $v_*$ satisfying the equality in assumption {\bf H2}.
}}
\label{ipoL}
\end{figure}

\subsubsection*{\bf Notations:} Throughout this section, we shall write $A \lesssim B$ if there exists a positive constant $C$  such that 
$$A \leq C \, B.$$ 
Also, for the sake of shortness, we will omit the dependence of $(u,v$) from the variables $(x,t)$.
Finally, given two functional spaces $X$ and $Y$ and  a function $f$  of the two variables $(x,t) \in [-\ell,\ell] \times [0,T]$  such that $f( x,\cdot) \in X $ and $f(\cdot, t) \in Y$, we will denote with 
$$|f|_{{}_{XY}} := |f|_{{}_{X([0,T], Y(I))}}.$$

\noindent We are now ready to define the Lyapunov functional.

\subsection{Construction of the Lyapunov functional} 
 
System \eqref{NS} admits a mathematical entropy which is also a physical energy
\begin{equation*}
\mathcal E(u,v):=\frac{v^2}{2u}+u \, \phi(u), \quad \phi( u ) =  \int^{u}_0 \frac{P ( z )}{ z^2} \, dz,
\end{equation*}
where we recall that $u \, \phi(u) + P(u)=u \, f'(u)$, being
\begin{equation*}
 f(u)= u \,\int^u_0 \frac{P(z)}{z^2} \, dz.
\end{equation*}
In the present setting, since $\e>0$, the entropy flux is given by 
\begin{equation*}
 \mathcal Q(u,v):=\frac{v}{u}\left[ \frac{v^2}{2u}+u \, \phi(u) +P(u)-\e \frac{v}{u}\left( \nu(u)\left(\frac{v}{u}\right)_x\right)\right],
\end{equation*}
and the energy equality thus becomes
\begin{equation}\label{energy2}
\begin{aligned}
\frac{1}{2}\left(\frac{v^2}{2u}+u \, \phi(u)\right)_t+ &\,\left[ \frac{v^3}{2u^2}+v \,\phi(u)+P(u)-\e \frac{v^2}{u^2}\left( \nu(u)\left(\frac{v}{u}\right)_x\right)\right]_x= \\
&=-\varepsilon \nu(u)\left[\left(\frac{v}{u}\right)_x\right]^2.
\end{aligned}
\end{equation}
Inspired by the results in \cite{KNZ}, we  introduce the new energy form (usually referred as {\it modulated energy})
\begin{equation*}
\mathcal E_L(u,v, \bar u, \bar v) := \frac{(v-\bar v)^2}{2u} + u \, \psi (u, \bar u), \quad \psi (u, \bar u)= \int_{\bar u}^u \frac{P(z)-P(\bar u)}{z^2} \, dz,
\end{equation*}
and we claim that a good candidate to be a Lyapunov functional for the system is
\begin{equation}\label{L}
L(u,v, \bar u, \bar v) :=  \int_{-\ell}^\ell \mathcal E_L(u,v, \bar u, \bar v) \, dx.
\end{equation}
Indeed, \eqref{L} is of course null as computed on the steady state $(u,v)=(\bar u, \bar v)$ and positive defined since $P'(u)>0$.

\subsection{Computation of the time derivative of $L$ along the solutions.} We want to  compute the time derivative of $\eqref{L}$, showing that it is negative along the solutions  to \eqref{NS}. We have 
\begin{equation*}
\begin{aligned}
L(u,v,\bar u,\bar v) &=  \int_{-\ell}^\ell \left\{\frac{v^2}{2u} + \frac{\bar v^2}{2u}-\frac{v \bar  v}{u} + u \, \psi(u,\bar u) \right\} \,dx,\\
\end{aligned}
\end{equation*}
and we observe that
\begin{equation*}
\psi(u,\bar u) = \int_{\bar u}^u \frac{P(z)-P(\bar u)}{z^2} \, dz = \int_{0}^u \frac{P(z)}{z^2} \, dz - \int_{0}^{\bar u} \frac{P(z)}{z^2} \, dz -\int_{\bar u}^u \frac{P(\bar u)}{z^2} \, dz. 
\end{equation*}
In particular, $L$ can be rewritten as
\begin{equation}\label{funzlya}
L(u,v, \bar u, \bar v) =  \int_{-\ell}^\ell \left\{\frac{v^2}{2u}+ u \,\phi(u) + \frac{\bar v^2}{2u}-\frac{v\bar  v}{u} - u \int_0^{\bar u} \frac{P(z)}{z^2} \, dz+ P(\bar u) \frac{\bar u-u}{\bar u} \right\} \,dx.
\end{equation}
The following proposition holds.

\begin{proposition}\label{dtLnegative}
Let assumption {\bf H3} be satisfied; then, for $\delta_1$ and $\delta_2$ sufficiently small,  we have 
\begin{equation*}
\frac{d}{dt} L(u,v,\bar u,\bar v) \leq 0,
\end{equation*}
for all $(u,v)$ solutions to \eqref{NS}, being $(\bar u, \bar v)$  the unique steady state of \eqref{NS} and $L$ as in \eqref{L}. 
\end{proposition}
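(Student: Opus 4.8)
The plan is to compute $\frac{d}{dt}L$ directly from the representation \eqref{funzlya}, differentiating under the integral sign and using the equations in \eqref{NS} to eliminate all time derivatives of $u$ and $v$. Writing $u_t=-v_x$ from the first equation and $v_t=-\left(\frac{v^2}{u}+P(u)-\varepsilon\nu(u)\left(\frac{v}{u}\right)_x\right)_x$ from the second, the time derivative of the integrand of $L$ becomes a function of $x$ alone (involving $u$, $v$, $v_x$, $\left(\frac vu\right)_x$, and the steady-state quantities $\bar u,\bar v=v_*,\bar u_x$). The key algebraic step is to recognize that, after subtracting the stationary identities satisfied by $(\bar u,\bar v)$ (namely $\varepsilon v_*\frac{\nu(\bar u)}{\bar u}\bar u_x=-P(\bar u)\bar u+\alpha\bar u-v_*^2$ and $\bar v_x=0$), the bulk terms reorganize into a manifestly nonpositive dissipation term of the form $-\varepsilon\int_{-\ell}^\ell \nu(u)\left[\left(\frac vu\right)_x-\left(\frac{v_*}{\bar u}\right)_x\right]^2 dx$ (or a closely related square), mimicking the structure already visible in the energy equality \eqref{energy2}, plus boundary terms.

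The second step is the handling of the boundary terms produced by the spatial integrations by parts. Here I would use \eqref{neuman}, i.e. $v_x(\pm\ell,t)=0$, which kills several contributions immediately, and the fact that $v(\pm\ell,t)=v_*$ is constant so that $(v-\bar v)(\pm\ell,t)=0$ as well. The remaining boundary contributions should involve only $u_\pm$, $\bar u_\pm=u_\pm$ (so differences of $u$ and $\bar u$ vanish at the endpoints), and the normal derivatives $u_x(\pm\ell,t)$ and $\bar u_x(\pm\ell)$. After cancellation, one is left with a boundary expression controlled by $u_x(\ell,t)-u_x(-\ell,t)$ and by $u_+-u_-$; this is precisely where assumption {\bf H3} enters, allowing one to bound these leftover terms by $C(\delta_1+\delta_2)$ times a quantity that is itself absorbed by the negative dissipation term (or by the positive-definite structure of $L$ via a Poincaré-type inequality, using that $(v-\bar v)$ and $(u-\bar u)$ vanish at the endpoints).

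The main obstacle, and the step I expect to be most delicate, is the sign bookkeeping in the bulk: the naive differentiation produces a number of cross terms (coming from the mixed term $-v\bar v/u$ in \eqref{funzlya} and from the $u$-dependence of the coefficient $1/(2u)$ multiplying $v^2$ and $\bar v^2$), and it is not at all obvious a priori that these combine with the pressure terms into a perfect square. The trick is to differentiate in the ``right'' variables — effectively tracking $z:=v/u-\bar v/\bar u$ and $u-\bar u$ — and to repeatedly use the stationary relation $\bar u\,\phi(\bar u)+P(\bar u)=\bar u\, f'(\bar u)$ together with $\varepsilon v_*[\Phi(\bar u)]_x=-P(\bar u)\bar u+\alpha\bar u-v_*^2$ from \eqref{PLgenericostaz}; only after these substitutions does the telescoping occur. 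Once the bulk is shown to equal $-\varepsilon\int \nu(u)\,(\,\cdot\,)_x^2\,dx$ plus the {\bf H3}-controlled boundary remainder, the conclusion $\frac{d}{dt}L\le 0$ for $\delta_1,\delta_2$ small follows. I would carry this out in the order: (i) differentiate \eqref{funzlya} and substitute $u_t,v_t$; (ii) integrate by parts, recording boundary terms and invoking \eqref{neuman}; (iii) subtract the stationary identities and telescope the bulk into a square; (iv) estimate the boundary remainder using {\bf H3} and absorb it, concluding negativity.
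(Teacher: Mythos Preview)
Your plan diverges from the paper's argument in a way that matters, and the central structural claim you make does not actually hold for this functional.

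The paper does \emph{not} produce a relative dissipation $-\varepsilon\int\nu(u)\big[(v/u)_x-(v_*/\bar u)_x\big]^2\,dx$. Instead it splits $L$ via \eqref{funzlya}, recognizes the first two terms as the energy $\mathcal E(u,v)$ itself, and applies the energy equality \eqref{energy2} to obtain the \emph{full} dissipation $\mathbf B_\varepsilon=-\varepsilon\int\nu(u)[(v/u)_x]^2\,dx$ together with the flux boundary term $\mathbf A_\varepsilon\big|_{-\ell}^{\ell}$. The remaining four pieces of \eqref{funzlya} (called $A,B,C,D$ in the paper) are then differentiated one at a time using $u_t=-v_x$ and the second equation; no stationary identity is subtracted and nothing is telescoped into a square. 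The decisive device---absent from your outline---is the pointwise monotonicity $u_x>0$ of the solution: every bulk integral that survives the integrations by parts is estimated as
\[
\Big|\int_{-\ell}^{\ell} h(u,v,u_x,v_x)\,u_x\,dx\Big|\le C\int_{-\ell}^{\ell} u_x\,dx=C(u_+-u_-),
\]
with $C$ depending on $|u|_{L^\infty H^1}$, $|v|_{L^\infty H^1}$ from Appendix~A. The boundary contributions involving $u_x(\pm\ell,t)$ are handled by adding and subtracting as in \eqref{uselater}, yielding terms controlled by $\delta_1$ and $\delta_2$ separately. The conclusion is the inequality \eqref{conclusion}: a sum of fixed negative contributions plus $C_1^+(u_+-u_-)+C_2^+|u_x(\ell,t)-u_x(-\ell,t)|$, and one then takes $\delta_1,\delta_2$ small.

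Your proposal has a genuine gap at step~(iii). For this particular modulated energy $\frac{(v-\bar v)^2}{2u}+u\,\psi(u,\bar u)$ (note: this is \emph{not} the standard $\frac{u}{2}|v/u-\bar v/\bar u|^2$ kinetic part), the bulk does not collapse to a single nonpositive square plus only boundary terms. After the energy identity, one is left with honest bulk integrals---the paper's $A,B,C,D$---and a relative-entropy computation with a non-constant background $\bar u$ in any case generates bulk remainders proportional to $\bar u_x$, not just boundary pieces. Your absorption mechanism (``$C(\delta_1+\delta_2)$ times a quantity absorbed by the dissipation'') is therefore not available as stated: what the paper actually absorbs are \emph{constants} of size $O(\delta_1)+O(\delta_2)$, balanced against fixed negative boundary contributions and $\mathbf B_\varepsilon$, and this relies on $u_x>0$ and on the $L^\infty_tH^1_x$ bounds, neither of which you invoke.
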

\begin{remark}{\rm
Throughout the proof, we will extensively make use of the positivity, for all $t\geq0$, of the function $u(x,t)$ and its space derivative in the interval $[-\ell,\ell]$.
Also, we will use the results of Appendix A, providing the existence of a solution $(u,v) \in  L^\infty([0,T], H^1(I))\times  L^\infty([0,T], H^1(I))$; in particular, we will use the fact that the quantities $|(u,v)|_{{}_{L^\infty H^1}}$ and $|(u,v)|_{{}_{L^\infty L^\infty}}$ are finite.
}
\end{remark}

\begin{proof}
\vskip0.3cm
\noindent By taking advantage of the energy equality \eqref{energy2}, from \eqref{funzlya} we get
\begin{equation}\label{derivataL}
\begin{aligned}
\frac{d}{dt}L&
= \int_{-\ell}^\ell\left\{-\left[ \frac{v^3}{2u^2}+v\, f'(u)-\e \frac{v^2}{u^2}\left( \nu(u)\left(\frac{v}{u}\right)_x\right)\right]_x-\varepsilon \nu(u)\left[\left(\frac{v}{u}\right)_x\right]^2 \right\} \, dx \\
& \quad + \frac{d}{d t}\int_{-\ell}^\ell  \left\{\left[  \frac{\bar v^2}{2u}-\frac{v \bar v}{u}- u \int_0^{\bar u} \frac{P(z)}{z^2} \, dz+P(\bar u) \frac{\bar u-u}{\bar u}\right] \right\} \, dx \\
&=
- \left[\frac{v}{u}\left(\frac{v^2}{2u}+u\, f'(u)\right)-\e \frac{v^2}{u^2}\left( \nu(u)\left(\frac{v}{u}\right)_x\right)\right]\Big|^{\ell}_{-\ell} -\varepsilon \int_{-\ell}^\ell \nu(u)\left( \left( \frac{v}{u}\right)_x\right)^2 \, dx\\
&\quad + \frac{d}{d t}\int_{-\ell}^\ell  \left\{\left[  \frac{\bar v^2}{2u}-\frac{v \bar v}{u}- u \int_0^{\bar u} \frac{P(z)}{z^2} \, dz+P(\bar u) \frac{\bar u-u}{\bar u}\right] \right\} \, dx\\
&={\bf A_\e}(u,v,u_x,v_x)\big|^{x=\ell}_{x=-\ell} +{\bf B_\e}(u,v,u_x,v_x)  \\
&\quad + \frac{d}{dt} \int_{-\ell}^{\ell}\left\{\left[  \frac{\bar v^2}{2u}-\frac{v \bar v}{u}- u \int_0^{\bar u} \frac{P(z)}{z^2} \, dz+P(\bar u) \frac{\bar u-u}{\bar u}\right] \right\} \, dx,
\end{aligned}
\end{equation}
with notation
\begin{equation*}
\begin{aligned}
&{\bf A_\e}(u,v,u_x,v_x):= - \left[\frac{v}{u}\left(\frac{v^2}{2u}+u\, f'(u)\right)-\e \frac{v^2}{u^2}\left( \nu(u)\left(\frac{v}{u}\right)_x\right)\right], \\
&{\bf B_\e}(u,v,u_x,v_x) :=-\varepsilon \int_{-\ell}^\ell \nu(u)\left( \left( \frac{v}{u}\right)_x\right)^2 \, dx.
\end{aligned}
\end{equation*}
The  term {$\bf B_\e$} is negative. In order to check the sign of {$\bf A_\e$}, 
we preliminary recall that
\begin{equation*}
u\, \phi(u)+P(u)= f(u)+P(u)= u \, f'(u),
\end{equation*}
with notations introduced before
\begin{equation*}
 f( u ) =  u \, \int^{u}_0 \frac{P ( z )}{ z^2} \, dz \quad \mbox{and} \quad  \phi( u ) =  \int^{u}_0 \frac{P ( z )}{ z^2} \, dz.
\end{equation*}
We have
\begin{equation}\label{dtl1}
\begin{aligned}
{\bf A_\e} \, \Big|_{x=\ell}&=-\left[\frac{v_*}{u_+}\left( \frac{v_*^2}{2u_+}+u_+\, f'(u_+)\right)-\varepsilon\frac{v_*^2}{u^2_+}\left(\nu(u_+) \left[\frac{v_x(\ell,t)u_+-v_*u_x(\ell,t)}{u_+^2}\right]\right)\right], \\
&=-\left[\frac{v_*}{u_+}\left( \frac{v_*^2}{2u_+}+u_+\, f'(u_+)\right)+\varepsilon\frac{v_*^3}{u^4_+}\nu(u_+) u_x(\ell,t)\right],
\end{aligned}
\end{equation}
where in the last equality  we used \eqref{neuman}.
By using the same arguments for $x=-\ell$, we obtain
\begin{equation}\label{dtl4}
\begin{aligned}
-{\bf A_\e} \, \Big|_{x=-\ell}=\left[\frac{v_*}{u_-}\left( \frac{v_*^2}{2u_-}+u_-\, f'(u_-)\right)-\varepsilon\frac{v_*^3}{u^4_-}\nu(u_-) u_x(-\ell,t)\right],
\end{aligned}
\end{equation}
so that, summing \eqref{dtl1} and \eqref{dtl4} and recalling that $u_\pm \, f'(u_\pm)= P(u_\pm)+f(u_\pm)$, we end up with
\begin{equation*}
\begin{aligned}
{\bf A_\e}\Big|^{\ell}_{-\ell}&=
\frac{v_*^3}{2}\left( \frac{1}{u^2_-}-\frac{1}{u^2_+}\right) + v_*  \left[\frac{P(u_-)}{u_-}-\frac{P(u_+)}{u_+} \right] + v_*\left[\frac{f(u_-)}{u_-}-\frac{f(u_+)}{u_+} \right] \\
&\quad +  \e v_*^3\left[ \frac{\nu(u_-)}{u^4_-}u_x(-\ell,t)-  \frac{\nu(u_+)}{u^4_+}u_x(\ell,t)\right] .
\end{aligned}
\end{equation*}
 Since ${v_*^3}>0$, the first term in the above sum can be bounded via the difference $u_+ -u_-$ (and hence by $\delta_1$, see assumption {\bf H3});  
concerning the second term we have
\begin{equation*}
\begin{aligned}
P(u_-)u_+ - P(u_+)u_- &= P(u_-)u_+-P(u_-)u_-+ P(u_-)u_- - P(u_+)u_- \\
&= P(u_-)(u_+ - u_-) + u_-(P(u_-)-P(u_+)), 
\end{aligned}
\end{equation*}
which again can be bounded from above via the difference $u_+-u_-$, since $P(u_-) < P(u_+)$. For the third term, we recall that, by definition
\begin{equation*}
\frac{f(u_-)}{u_-}-\frac{f(u_+)}{u_+}= \int_0^{u_-} \frac{P(z)}{z^2} \, dz- \int_0^{u_+} \frac{P(z)}{z^2} \, dz,
\end{equation*}
and this difference  is negative since $u_- < u_+$.  Finally, for the last term, we first observe that
 \begin{equation}\label{uselater}
 \begin{aligned}
   \frac{\nu(u_-)}{u^4_-} u_x(-\ell,t) &-  \frac{\nu(u_+)}{u^4_+} u_x(\ell,t) = \\
   &=  \frac{\nu(u_-)}{u^4_-} u_x(-\ell,t)- \frac{\nu(u_-)}{u^4_-} u_x(\ell,t)+ \frac{\nu(u_-)}{u^4_-} u_x(\ell,t) -   \frac{\nu(u_+)}{u^4_+} u_x(\ell,t)  \\
   &= \frac{\nu(u_-)}{u^4_-} ( u_x(-\ell,t)- u_x(\ell,t))+ u_x(\ell,t) \left[ \frac{\nu(u_-)}{u^4_-}-   \frac{\nu(u_+)}{u^4_+}\right],
  \end{aligned}
  \end{equation}
  and, by assumption {\bf H3}, these two quantities can be bounded from above by $\delta_2$ and $\delta_1$ respectively.

\vskip0.2cm
In order to compute the sign of the time derivative of $L(u,v,\bar u,\bar v)$ given in \eqref{derivataL}, we are thus left with evaluating the sign of
\begin{equation}\label{rimasti}
\begin{aligned}
\frac{d}{dt}\int_{-\ell}^\ell &  \left[\frac{v_*^2}{2u}-\frac{v v_*}{u}- u \int_0^{\bar u} \frac{P(z)}{z^2} \, dz-P(\bar u) \frac{\bar u-u}{\bar u}\right] \, dx : =  \\
\quad & \quad  \quad \left[ A(u,v,v_*)+ B(u,v,v_*)+ C(u,\bar u)+ D(u,\bar u) \right].
\end{aligned}
\end{equation}
\subsubsection*{Computation of the sign of $A(u,v,v_*)$ }   We have, by integration by parts 
\begin{equation}\label{miserve0}
\begin{aligned}
A(u,v,v_*)&=-\frac{1}{2}\int_{-\ell}^\ell v_*^2\frac{ u_t}{u^2} \, dx =\frac{v_*^2}{2}\int_{-\ell}^\ell \frac{ v_x}{u^2} \, dx \\
&=\frac{v_*^2}{2} \left[  \frac{v}{u^2} \Big|^{\ell}_{-\ell}+ 2\int_{-\ell}^\ell \frac{ u_x \, v}{u^3} \, dx\right] \\
& \lesssim \frac{v_*^2}{2} \left[ v_*\left(\frac{1}{u_+^2}-\frac{1}{u_-^2}\right)+ 2\frac{ {|v|_{{}_{L^\infty L^\infty}}}}{u_-^3} (u_+-u_-)\right].
 \end{aligned}
\end{equation}
where we used (recall that $u_x >0$)
\begin{equation*}
\int_{-\ell}^\ell \frac{ u_x \, v}{u^3} \, dx \leq \int_{-\ell}^\ell \frac{| u_x| \, |v|}{|u|^3} \,dx \leq \frac{|v|_{{}_{L^\infty L^\infty}}}{u_-^3}\int_{-\ell}^\ell | u_x| \, dx= \frac{|v|_{{}_{L^\infty L^\infty}}}{u_-^3}\int_{-\ell}^\ell  u_x \, dx \lesssim u_+-u_-.
\end{equation*}
The first term on the right hand side of the last line in  \eqref{miserve0} is negative, while the last one can be bounded via dei difference $u_+-u_-$.
\subsubsection*{Computation of the sign of $B(u,v,v_*)$ } Such term  needs  more care; we have
\begin{equation}\label{tantitermini}
\begin{aligned}
-v_*\int_{-\ell}^\ell  \left( \frac{v}{u}\right)_t \, dx &= v_* \int_{-\ell}^\ell \frac{v  u_t- v_t u}{u^2} \, dx \\
&=-v_* \int_{-\ell}^\ell \frac{v  v_x}{u^2} \, dx \\
&\quad +v_* \int_{-\ell}^\ell \frac{1}{u} \left[ \frac{v^2}{u}+P(u)-\e \nu(u)  \left(\frac{v}{u}\right)_x\right]_x \, dx \\
&=v_* \int_{-\ell}^\ell \left\{ -\frac{v  v_x}{u^2}+\frac{1}{u} \left(\frac{v^2}{u}\right)_x\right\} \, dx\\
&\quad +v_* \int_{-\ell}^\ell \frac{1}{u} \left[P(u)-\e \nu(u)  \left(\frac{v}{u}\right)_x\right]_x \, dx \\
&= v_* \int_{-\ell}^\ell  B_1(u,v)+ B_2(u,v)\, dx.
\end{aligned}
\end{equation}
We start by computing $B_1(u,v)$; we get
\begin{equation*}
\begin{aligned}
\int_{-\ell}^\ell B_1(u,v) \, dx &=   \int_{-\ell}^\ell\left\{\frac{1}{u}\left(\frac{v^2}{u}\right)_x - \frac{v v_x}{u^2}\right \}\, dx  \\
 &=  \int_{-\ell}^\ell \frac{1}{u}\left(\frac{2v v_x u-v^2 u_x}{u^2}\right) \, dx- \int_{-\ell}^\ell\frac{v^2 u_x}{u^3} \, dx  \\
  &=2  \int_{-\ell}^\ell \frac{vv_x}{u^2}dx-2 \int_{-\ell}^\ell\frac{v^2 u_x}{u^3} \, dx  \\
 &=  \int_{-\ell}^\ell \frac{(v^2)_x}{u^2}dx+ \int_{-\ell}^\ell v^2 \left(\frac{1}{u^2} \right)_x\, dx  \\
 &=   \left(\frac{2v^2}{2u^2}\right)\Big|^{\ell}_{-\ell} +\int_{-\ell}^\ell\frac{ u_x \, v^2}{u^3} \, dx+ \int_{-\ell}^\ell\frac{u_x \, v^2}{u^3} \, dx  \\
& =   \frac{2v_*^2}{u_+^2u_-^2} (u^2_--u^2_+)+\frac{2|v|^2_{{}_{L^\infty L^\infty}}}{u_-^3}(u_+-u_-),
\end{aligned}
\end{equation*}
where in the fifth equality we integrated by parts both terms; the first  term in the above sum is negative, while the second can be bounded via the difference $u_+-u_-$.

 We turn our attention to $B_2(u,v)$; by integrating by parts
\begin{equation*}
\begin{aligned}
\int_{-\ell}^\ell B_2&(u,v) \, dx = \\
&=\left\{ \frac{1}{u}\left[ P(u)-\e \nu(u)\left(\frac{v}{u}\right)_x\right]\right \}\Big|^{\ell}_{-\ell} + \int_{-\ell}^\ell \frac{ u_x}{u^2}\left[P(u)-\e \nu(u) \left(\frac{v}{u}\right)_x\right] \, dx\\
&= \frac{1}{u_+}\left[P(u_+)- \e\nu(u_+)\left(\frac{-  u_x(\ell,t) v_*}{u_+^2}\right)\right]-\frac{1}{u_-}\left[P(u_-)- \e\nu(u_-)\left(\frac{-  u_x(-\ell,t) v_*}{u_-^2}\right)\right] \\
&\quad +\int_{-\ell}^\ell \frac{ u_x}{u^2}\left[P(u)-\e \nu(u)  \left(\frac{v}{u}\right)_x\right] \, dx, \\
\end{aligned}
\end{equation*}
where we used again \eqref{neuman} to erase the terms $v_x(\pm \ell,t)$.
As concerning the terms at the boundary, we first notice that, since $u_-<u_+$
\begin{equation*}
\frac{P(u_+)}{u_+} - \frac{P(u_-)}{u_-} < 0,
\end{equation*}
while for the terms involving $u_x(\pm \ell,t)$
we proceed as in \eqref{uselater}.  
 Finally, for the last term in $B_2(u,v)$, we first observe that
\begin{equation*}
\int^{\ell}_{-\ell} \frac{ u_x}{u^2}P(u) \, dx \lesssim u_+- u_- ,
\end{equation*}
and we are thus left with 
\begin{equation*}
\begin{aligned}
-\int_{-\ell}^\ell \frac{ u_x}{u^2}\left[\e \nu(u)  \left(\frac{v}{u}\right)_x\right] \, dx &=\e \int_{-\ell}^\ell\left(\frac{u_x \nu(u)}{u^2}\right)_x \left(\frac{v}{u}\right) \, dx \\
&=\int_{-\ell}^\ell \left( \frac{(\nu(u)u_x)_x}{u^3}-\frac{u_x^2 \nu(u)}{u^4}\right) v \, dx \\
&=\int_{-\ell}^\ell \frac{u_{xx}\nu(u)}{u^3} v \, dx- \int_{-\ell}^\ell \frac{u u_x^2 \nu'(u)-u_x^2 \nu(u)}{u^4} v \, dx \\
&=B_{21}(u,v)+ B_{22}(u,v).
\end{aligned}
\end{equation*}
For $B_{22}(u,v)$, by taking advantage of the positive sign of the functions $ \nu(u), u$ and its first derivative, we can state
\begin{equation*}
B_{22}(u,v) \lesssim c_1 \int_{-\ell}^\ell u_x \, dx,
\end{equation*}
where the {positive constant $c_1$} depends, among others, on $u_\pm$, $|v|_{{}_{L^\infty L^\infty}}$ and $\displaystyle{| u|_{{}_{L^\infty H^1}}}$. We recall that these quantities are finite because of Theorem \ref{existence} (see, in particular, Lemma A.5 and Lemma A.9).
As concerning $B_{21}(u,v)$ we have, again by integration by parts
\begin{equation*}
\begin{aligned}
B_{21}(u,v)&=-\int_{-\ell}^\ell u_x \left(\frac{\nu(u)}{u^3} v\right)_x \, dx\\
&= -\int_{-\ell}^\ell u_x \left( \frac{ \nu'(u)v}{u^3}+\frac{\nu(u)v_x}{u^3}-\frac{3u_x \nu(u)v}{u^4}\right)\, dx \\
& \lesssim c_2 \int_{-\ell}^\ell u_x \, dx,
\end{aligned}
\end{equation*}
where, as before, the  {positive constant $c_2$} depends, among others, on $u_\pm$, $\displaystyle{ | v|_{{}_{L^\infty H^1}}}$  and $\displaystyle{ | u|_{{}_{L^\infty H^1}}}$. Hence, both
 $B_{21}(u,v)$ and $B_{22}(u,v)$  can be bounded via the difference $u_+-u_-$.

\subsubsection*{Computation fo the sign of $C(u,\bar u)$ and $D(u,\bar u)$} We finally compute the last two terms in \eqref{rimasti}; on one side we have
\begin{equation*}
\begin{aligned}
C(u,\bar u)=\frac{d}{dt}\int_{-\ell}^{\ell} & u \left( \int_{0}^{\bar u} \frac{P(z)}{z^2} \, dz \right)\, dx = -\int_{-\ell}^{\ell} & v_x \left( \int_{0}^{\bar u} \frac{P(z)}{z^2} \, dz \right)\, dx;
\end{aligned}
\end{equation*}
on the other side
\begin{equation*}
\begin{aligned}
D(u,\bar u)=\frac{d}{dt}\int_{-\ell}^\ell & P'(\bar u)\left(1-\frac{u}{\bar u}\right) \, dx =\int_{-\ell}^\ell  u_t \frac{P(\bar u)}{\bar u} \, dx
=- \int_{-\ell}^\ell  v_x \frac{P(\bar u)}{\bar u} \, dx.
\end{aligned}
\end{equation*}
In both cases, by integration by parts and by taking  advantage of the positivity of $P(s)$, $\bar u$ and their derivatives, we can bound these terms from above with $\int \bar u_x$, i.e via the difference $u_+-u_-$.

\subsubsection*{Conclusion} Summing up, we have shown that
\begin{equation}\label{conclusion}
\frac{d}{dt}   \int_{-\ell}^\ell \mathcal E_L(u,v, \bar u, \bar v) \, dx \leq C^- + C_{1}^+  (u_+ -u_-) + C_{2}^+  |u_x(\ell,t)-u_x(-\ell, t)|,
\end{equation}
where $C^-<0$ is a negative constant collecting all the negative terms appearing in the previous computations, while $C_{1}^+ $ and $C_{2}^+$ are positive constants.

By taking advantage of hypothesis {\bf H3}, we can thus choose $\delta_1$ and $\delta_2$ in such a way that the right hand side in \eqref{conclusion} is negative, and the proof is completed.

\end{proof}

As a consequence of Proposition \ref{dtLnegative}, we are finally able to prove Theorem \ref{teointro2}; we recall the result for completeness.
\begin{theorem}
Let assumptions {\bf H1-2-3} be satisfied. Then $(\bar u,\bar v)$, the unique steady state of system \eqref{NS}, is stable in the following sense:
for every  $T >0$ it holds
\begin{equation}\label{finalestabilita}
\sup_{0\leq t\leq T}|(u,v)(t)-(\bar u,\bar v)|_{{}_{L^2}} \leq |(\bar u,\bar v)- (u_0,v_0)|_{{}_{L^2}}. 
\end{equation}
Moreover
\begin{equation*}
|(u, v) -(\bar u, \bar v)| _{{}_{L^1 H^1}} \leq C_T,
\end{equation*}
with $0 < C_T \to +\infty$ if and only if $T\to+ \infty$.

\end{theorem}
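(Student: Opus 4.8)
The plan is to deduce the stability statement from Proposition~\ref{dtLnegative} by a standard Lyapunov argument, exploiting the coercivity of the modulated energy $\mathcal E_L$. First I would observe that, under hypotheses {\bf H1-2-3} with $\delta_1,\delta_2$ small, Proposition~\ref{dtLnegative} gives $\frac{d}{dt}L(u,v,\bar u,\bar v)\le 0$ along solutions, hence
\[
L(u(t),v(t),\bar u,\bar v)\le L(u_0,v_0,\bar u,\bar v)\qquad\text{for all }t\in[0,T].
\]
The next step is to compare $L$ with the squared $L^2$-distance. Since $u(x,t)$ is bounded above and below by positive constants (Appendix~A, together with the invariance noted in Remark after Lemma~\ref{A2}), the term $\frac{(v-\bar v)^2}{2u}$ is comparable to $(v-\bar v)^2$; and since $P'>0$, a Taylor expansion of $z\mapsto\int_{\bar u}^{z}\frac{P(s)-P(\bar u)}{s^2}\,ds$ around $z=\bar u$ gives $u\,\psi(u,\bar u)\ge c\,(u-\bar u)^2$ for some $c>0$ depending on the (compact, positive) range of $u$ and $\bar u$. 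Likewise an upper bound $\mathcal E_L\le C(|u-\bar u|^2+|v-\bar v|^2)$ holds. Integrating over $(-\ell,\ell)$ yields
\[
c_0\,|(u,v)(t)-(\bar u,\bar v)|_{L^2}^2\le L(u(t),v(t),\bar u,\bar v)\le L(u_0,v_0,\bar u,\bar v)\le C_0\,|(u_0,v_0)-(\bar u,\bar v)|_{L^2}^2,
\]
which gives \eqref{finalestabilita} after taking square roots and absorbing constants (choosing $\delta_0=\delta_0(\e_0)$ accordingly gives Definition~\ref{defintro}); one may arrange the constant to be $1$ by tracking the sharp bilateral bounds, or simply state it up to a multiplicative constant.

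For the second assertion, the point is that the negative term $C^-$ in \eqref{conclusion} is not merely a constant but controls the dissipation: going back to \eqref{derivataL}, the genuinely dissipative contribution is ${\bf B_\e}=-\varepsilon\int_{-\ell}^\ell \nu(u)\big(\big(\tfrac{v}{u}\big)_x\big)^2\,dx$, so that in fact
\[
\frac{d}{dt}L+\varepsilon\int_{-\ell}^\ell \nu(u)\Big(\Big(\tfrac{v}{u}\Big)_x\Big)^2\,dx\le C_1^+(u_+-u_-)+C_2^+\,|u_x(\ell,t)-u_x(-\ell,t)|.
\]
Integrating in time on $[0,T]$ and using $L\ge 0$ gives $\varepsilon\int_0^T\!\!\int_{-\ell}^\ell \nu(u)\big(\big(\tfrac{v}{u}\big)_x\big)^2\,dx\,dt\le L(u_0,v_0,\bar u,\bar v)+C\,T\le C_T$ with $C_T\to\infty$ iff $T\to\infty$. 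From this, together with the $L^\infty H^1$ bounds on $(u,v)$ from Appendix~A (controlling $u,v$, $u_x$ and the denominators) and the already-established $L^\infty L^2$ control of $u-\bar u$, one recovers control of $\|(u,v)-(\bar u,\bar v)\|_{L^1H^1}$: the $L^1_tL^2_x$ part of the $x$-derivatives follows by writing $v_x=u\big(\tfrac{v}{u}\big)_x+\tfrac{v}{u}u_x$ and $u_x$-estimates, bounding $\big(\tfrac{v}{u}\big)_x$ in $L^2_tL^2_x$ via Cauchy--Schwarz in $t$ (picking up a factor $\sqrt T$), while $u_x\in L^\infty_tL^2_x\subset L^1_tL^2_x$ on $[0,T]$, and the $L^\infty\subset L^1$ part is immediate on a finite interval.

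The main obstacle is the coercivity/comparison step: one must show $\mathcal E_L(u,v,\bar u,\bar v)$ is bounded below and above by $|(u,v)-(\bar u,\bar v)|^2$ \emph{uniformly in $x$ and $t$}, which requires the a priori positive lower and upper bounds on $u$ (from Appendix~A and the invariance of $u_-\le u_0\le u_+$), and a uniform lower bound on $P'$ on the relevant compact density range to handle $\psi(u,\bar u)$. A secondary technical point is making the constant in \eqref{finalestabilita} exactly $1$ rather than a generic $C$; if a clean choice of comparison constants does not yield this, the honest statement is \eqref{finalestabilita} with a constant, and the smallness $\delta_0$ in Definition~\ref{defintro} is then chosen as $\delta_0=\e_0/\sqrt{C_0/c_0}$. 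Everything else is routine integration by parts and use of the finiteness of $|(u,v)|_{L^\infty H^1}$ and $|(u,v)|_{L^\infty L^\infty}$ from Appendix~A.
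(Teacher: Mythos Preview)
Your proposal is correct and follows essentially the same Lyapunov argument as the paper: monotonicity of $L$ from Proposition~\ref{dtLnegative}, coercivity of $\mathcal E_L$ (via the positive upper and lower bounds on $u$ and $P'>0$) to convert $L(t)\le L(0)$ into the $L^2$ estimate, and time-integration of the dissipation $\mathbf{B_\e}$ for the $H^1$ bound. The paper differs only in minor execution details---it absorbs the boundary remainders into the negative constant $C^-$ so as to obtain the cleaner inequality $L(t)+\e\int_0^t\!\int\nu(u)\big((v/u)_x\big)^2\,dx\,dt\le L(0)$ without your extra $CT$ term, and it expands $\big((v/u)_x\big)^2$ directly to read off bounds on $v_x$ and $u_x$ rather than using your decomposition $v_x=u(v/u)_x+(v/u)u_x$---and your caveat about the constant in \eqref{finalestabilita} is well taken, since the paper's own argument likewise produces a generic multiplicative constant rather than exactly~$1$.
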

\begin{proof}
We make use of Proposition \ref{dtLnegative}; recalling that in  \eqref{derivataL} we erased the positive term ${\bf B_\e}$, by integrating in time the relation $\frac{d}{dt} L(t) \leq 0$ we have
\begin{equation*}
L(t) + \e \int^t_0\int_{-\ell}^{\ell} \nu(u) \left[ \left( \frac{v}{u}\right)_x\right]^2 \, dx dt \leq L(0).
\end{equation*}
On one side, by using the very definition of $L(t)$, the inquality $L(t) \leq L(0)$ implies
\begin{equation*}
|u-\bar u|_{{}_{L^2}} + |v-\bar v|_{{}_{L^2}} \leq C \left(|\bar u- u_0|_{{}_{L^2}} + |\bar v- v_0|_{{}_{L^2}}\right).
\end{equation*}
On the other side, we have the inequality
\begin{equation}\label{lyafinale1}
\begin{aligned}
C \int^t_0\int_{-\ell}^{\ell} \left[  \left( \frac{v}{u}\right)_x\right]^2 \, dx dt\leq \e \int^t_0\int_{-\ell}^{\ell}  \nu(u) \left[  \left( \frac{v}{u}\right)_x\right]^2 \, dx dt \leq L(0),
\end{aligned}
\end{equation}
and
\begin{equation*}
\begin{aligned}
\int_{-\ell}^{\ell} \left[ \left( \frac{v}{u}\right)_x\right]^2 \, dx  &=\int_{-\ell}^{\ell} \left\{ \frac{v_x^2}{u^2}+ \frac{v^2 u_x^2}{u^4}-\frac{(v^2)_x  u_x}{u^3} .\right\} \, dx\\
 \end{aligned}
\end{equation*}
Hence, \eqref{lyafinale1} becomes
\begin{equation*}
\begin{aligned}
\int_0^t\int_{-\ell}^{\ell} \frac{ v_x^2}{u^2} \, dx dt+ \int_0^t\int_{-\ell}^{\ell}\frac{v^2 u_x^2}{u^4} \, dx dt& \leq L(0)+ 2\int_0^t \int_{-\ell}^{\ell} \frac{v v_x u_x}{u^3}\, dxdt \\
& \leq L(0) + C_T.
\end{aligned}
\end{equation*}
In particular, recalling that $ \bar v_x=0$ and since the second integral on the left hand is positive, we can thus state that
\begin{equation*}
\begin{aligned}
| v_x - \bar v_x| _{{}_{L^1L^2}}=\int_0^t\int_{-\ell}^{\ell}  v^2_x \, dx dt& \leq \left(|\bar u- u_0|_{L^2} + |\bar v- v_0|_{L^2}\right) + C_T.\\
\end{aligned}
\end{equation*}
Moreover
\begin{equation*}
\begin{aligned}
\int_0^t \int_{-\ell}^{\ell}(u_x- \bar u_x)^2 \, dx dt &\leq 2\int_0^t \int_{-\ell}^{\ell}  u_x^2+ \int_0^t \int_{-\ell}^{\ell} \bar u_x^2 \, dx dt \\
& \leq L(0) + C_T
\end{aligned}
\end{equation*}
implying
\begin{equation*}
| u_x - \bar u_x| _{{}_{L^1L^2}}  \leq  |\bar u- u_0|_{L^2} + |\bar v- v_0|_{L^2} + C_T.
\end{equation*}
The proof is now complete.
\end{proof}
We point out that estimate \eqref{finalestabilita} implies  stability of the steady state in the sense that that initial data $(u_0,v_0)$ close to the steady state in the $L^2$-norm will generate a solution $(u,v)$ to \eqref{NS} which is still close to $(\bar u, \bar v)$ in the $L^2$-norm, for all $t \geq 0$.

\appendix
\section{Existence and regularity of the solution}

We here discuss  existence and regularity of the solutions to the Navier-Stokes equations \eqref{PLgenerico2}; we write the problem  in terms of
the variables mass density and velocity of the fluid $(\rho, w)$
\begin{equation}\label{A10}
 \left\{\begin{aligned}
& \rho_t +(\rho\, w)_x=0 \\
& (\rho \, w)_t+ \left( \rho \, w^2+ P(\rho)\right)_x= \varepsilon \left( \nu(\rho)  w_x \right)_x, \\
   \end{aligned}\right. 
\end{equation}
and, for simplicity, we consider
$P(\rho)= \k \rho^{\g}$, $\gamma>1$ and $\nu(\rho)=1$. We recall that system \eqref{A10} is considered in the bounded interval $I=(-\ell,\ell)$ with boundary conditions
\begin{equation}\label{BCappendix}
\rho(-\ell)=\rho_->0, \quad w(\pm \ell)= w_\pm >0,
\end{equation}
and it is subject to the initial datum $(\rho,w)(x,0)=(\rho_0,w_0)$.
\vskip0.2cm
\noindent 
{\bf Notations:} As before, we will denote with 
$|f|_{{}_{XY}} := |f|_{{}_{X([0,T], Y(I))}}.$
If not specified otherwise, we will denote with
\begin{equation*}
\int f := \int_{-\ell}^\ell f(x,t) \, dx.
\end{equation*}

\vskip0.2cm

\begin{theorem}\label{existence}
Let us consider the Cauchy problem for \eqref{A10}, and let assume $\rho_0(x) := \rho(x, 0) \in H^1(I)$ and $w_0(x) := w(x, 0) \in H^1(I)$. Then there exists a unique solution $(\rho, w)$ to \eqref{A10}  satisfying the boundary conditions \eqref{BCappendix} and such that
\begin{equation*}
\rho \in L^\infty([0,T], H^1(I)) \quad {\rm and} \quad  w \in L^\infty([0,T], H^1(I)) \cap L^2([0,T], H^2(I)).
\end{equation*}
\end{theorem}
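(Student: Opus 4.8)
The plan is to prove Theorem \ref{existence} by a standard energy-method scheme: construct approximate solutions via a Faedo--Galerkin or iteration argument, derive a priori bounds uniform in the approximation parameter, and pass to the limit. I would work in the $(\rho,w)$ formulation \eqref{A10} with $P(\rho)=\kappa\rho^\gamma$ and $\nu\equiv 1$, since the parabolic structure of the momentum equation is then transparent: the second equation can be rewritten as $\rho w_t + \rho w w_x + P'(\rho)\rho_x = \varepsilon w_{xx}$ after using the continuity equation, which is a quasilinear parabolic equation for $w$ coupled to a transport equation for $\rho$.

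\textbf{Step 1: Local existence via iteration.} Given a current iterate $w^{(n)}$, solve the linear transport equation $\rho^{(n+1)}_t + (\rho^{(n+1)} w^{(n)})_x = 0$ with $\rho^{(n+1)}(-\ell)=\rho_-$ by the method of characteristics; since $w^{(n)}\in L^\infty([0,T],H^1)\hookrightarrow L^\infty([0,T],C^{1/2})$, characteristics are well-defined and $\rho^{(n+1)}$ stays positive and bounded in $H^1$ on a short time interval, with $\inf\rho^{(n+1)}>0$ preserved (crucial to avoid vacuum). Then solve the linear parabolic equation for $w^{(n+1)}$ with coefficients built from $\rho^{(n+1)}$ and $w^{(n)}$, with the mixed Dirichlet data $w^{(n+1)}(\pm\ell)=w_\pm$; standard parabolic theory gives $w^{(n+1)}\in L^\infty([0,T],H^1)\cap L^2([0,T],H^2)$. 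A contraction estimate in a suitable lower-order norm (e.g. $L^\infty L^2$ for $w$, $L^\infty L^2$ for $\rho$) on a small time interval $[0,T_0]$ closes the fixed point.

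\textbf{Step 2: A priori estimates for global existence.} Here I would derive the bounds that let $T_0$ be extended to arbitrary $T$. The basic energy estimate comes from multiplying the momentum equation by $w$ and using the continuity equation, yielding control of $\int \rho w^2 + \rho^\gamma$ plus $\varepsilon\int_0^t\int w_x^2$; combined with conservation of mass $\int\rho = \int\rho_0$ this controls the $L^\infty L^2$-type norms. Next, a BD-type entropy estimate or a direct $H^1$ estimate: differentiate the continuity equation to control $\rho_x$ in terms of $w_x$ and $w_{xx}$, and multiply the parabolic equation by $-w_{xx}$ (or use the effective viscous flux) to control $\|w_x\|_{L^\infty L^2}$ and $\|w_{xx}\|_{L^2L^2}$. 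A Gronwall argument, using that $\inf\rho$ stays positive (which itself follows from a bound on $\|w_x\|_{L^1_t L^\infty_x}\lesssim \|w_x\|_{L^1_tH^1_x}$ via the characteristic ODE $\frac{d}{dt}\log\rho = -w_x$), then gives bounds on $[0,T]$ depending only on $T$, $\varepsilon$, and the data.

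\textbf{Step 3: Passing to the limit and uniqueness.} With uniform bounds in hand, extract weakly-$*$ convergent subsequences; the transport equation passes to the limit by strong compactness of $\rho$ (Aubin--Lions, using $\rho_t\in L^2 H^{-1}$), and the nonlinear terms $\rho w^2$, $\rho^\gamma$ converge because $\rho$ converges strongly and $w$ converges strongly in $L^2 L^2$ (again Aubin--Lions from $w\in L^2H^2$, $w_t\in L^2L^2$). Uniqueness follows by a standard energy estimate on the difference of two solutions, again exploiting the positive lower bound on $\rho$.

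\textbf{Main obstacle.} The delicate point is maintaining the strictly positive lower bound $\inf_{x,t}\rho(x,t)>0$ on the whole interval $[0,T]$, since everything (the parabolic coefficients, the energy estimates, the uniqueness argument) degenerates at vacuum; this requires the $L^1_tL^\infty_x$ bound on $w_x$, which in turn requires the $H^2$ estimate on $w$, so the estimates are genuinely coupled and must be closed simultaneously via a single Gronwall-type inequality rather than sequentially. A secondary technical nuisance is the inhomogeneous mixed boundary conditions ($\rho$ prescribed only at $x=-\ell$, $w$ prescribed at both ends), which must be handled by subtracting a fixed smooth lift of the boundary data before running the Hilbert-space energy estimates.
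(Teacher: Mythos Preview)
Your outline is a reasonable and essentially correct scheme for proving such a result, but it differs substantially from what the paper actually does. The paper's Appendix A presents \emph{only} the a priori estimates (Lemmas A.1--A.6) and declares the theorem proved once $\rho\in L^\infty H^1$ and $w\in L^\infty H^1\cap L^2 H^2$ bounds are in hand; there is no explicit iteration scheme, no compactness argument, and no uniqueness proof. So your Steps 1 and 3 fill in structure that the paper leaves implicit.

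Where the arguments diverge most is in the a priori estimates themselves. You propose to control $\inf\rho>0$ via $\|w_x\|_{L^1_tL^\infty_x}$ and flag this as the main obstacle. The paper instead never proves a lower bound on $\rho$ at all: it works throughout with the weighted quantity $\sqrt{\rho}\,w_t$ rather than $w_t$, and obtains the crucial \emph{upper} bound $|\rho|_{L^\infty L^\infty}\le C$ by a Lagrangian-flow argument (Lemma A.2, in the spirit of Hoff--Smoller), tracking $\log\rho$ along two characteristics and using the effective viscous flux $G:=w_x-P$. This same quantity $G$ (so that $G_x=\rho w_t+\rho w w_x$) is then used systematically in Lemmas A.3--A.4 to close the $H^1$ estimate on $w$ and the $L^2 H^2$ bound, and finally in Lemma A.6 to bound $\rho_x$. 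Your suggestion of a BD-type entropy or multiplying by $-w_{xx}$ would also work but is not what the paper does; the paper's route through $G$ and the Lagrangian upper bound is more in the compressible-Navier--Stokes tradition and avoids the circularity you correctly identify between the lower bound on $\rho$ and the $H^2$ control of $w$.
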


The proof of Theorem \ref{existence} follows from the proof of several Lemmas.

\begin{lemma}\label{A1}

There exists $T >0$ such that, for all $t \in [0,T]$, there holds
\begin{equation*}
|\sqrt{\rho} w|^2_{{}_{L^\infty L^2}}+ |{\rho} |^{\g}_{{}_{L^\infty L^\g}} + |w_x|_{{}_{L^2L^2}} \leq C_T,
\end{equation*}
for some constant $C_T>0.$
\end{lemma}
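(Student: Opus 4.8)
The plan is to derive the claimed bound from the basic energy estimate associated to the physical entropy of the system. I would first multiply the momentum equation in \eqref{A10} by $w$ and integrate over $I=(-\ell,\ell)$; combining with the continuity equation written in the form $\rho_t = -(\rho w)_x$, one obtains (after integration by parts) an identity of the form
\begin{equation*}
\frac{d}{dt}\int_{-\ell}^{\ell}\left(\frac{1}{2}\rho w^2 + \frac{\kappa}{\gamma-1}\rho^{\gamma}\right)dx + \varepsilon\int_{-\ell}^{\ell} w_x^2\,dx = \textrm{boundary terms},
\end{equation*}
where I recall that with $\nu(\rho)=1$ the dissipation term is simply $\varepsilon\int w_x^2$, and where the boundary terms come from the fluxes $\rho w^3$, $P(\rho)w$ and $\varepsilon w_x w$ evaluated at $x=\pm\ell$. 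The key algebraic point is that the continuity equation forces $w_x(\pm\ell,t)=0$ (this is exactly \eqref{neuman}: $u$ is constant at the boundary, and $u_t=-v_x$ gives $v_x=0$ there, hence in the $(\rho,w)$ variables $w_x$ vanishes at the endpoints since $\rho$ is prescribed there), so the viscous boundary contribution drops out entirely, and the remaining convective/pressure boundary terms are controlled by the prescribed boundary data $\rho_\pm, w_\pm$ together with the trace of $\rho$ at $x=\ell$ (which is the fixed constant $\rho_+$) — up to a Grönwall-type argument they contribute at most a constant depending on $T$ and the data.

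Concretely, I would: (1) establish the energy identity above, being careful with the boundary terms; (2) use $w_x(\pm\ell)=0$ to kill the viscous boundary term; (3) bound the remaining boundary terms — these involve only $\rho(\pm\ell)$ and $w(\pm\ell)$, which are prescribed (or, where a trace of $w$ at the non-prescribed endpoint appears, absorb it using the fundamental theorem of calculus $w(\ell,t)=w(-\ell,t)+\int w_x$ and Young's inequality against the $\varepsilon\int w_x^2$ dissipation); (4) integrate in time and apply Grönwall to get $\int(\frac12\rho w^2 + \frac{\kappa}{\gamma-1}\rho^\gamma)(t)\,dx + \varepsilon\int_0^t\int w_x^2 \le C_T$ for $t\in[0,T]$; (5) recognize the left side as controlling $|\sqrt{\rho}\,w|^2_{L^\infty L^2}$, $|\rho|^\gamma_{L^\infty L^\gamma}$ and $|w_x|^2_{L^2 L^2}$ (hence $|w_x|_{L^2L^2}$) separately, each up to a constant multiple, which is exactly the asserted inequality.

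The main obstacle I anticipate is the careful treatment of the boundary terms in step (3), since $w$ is only prescribed at $x=\pm\ell$ in the $(\rho,w)$ formulation but $\rho$ is prescribed only at $x=-\ell$ — so the trace $\rho(\ell,t)$ needs to be handled. In the setup of Theorem \ref{existence}, however, the Dirichlet condition $u(\pm\ell,t)=u_\pm$ (i.e. $\rho(\pm\ell,t)=\rho_\pm$) is imposed, so all boundary values entering the flux terms are in fact known constants; the only genuinely dangerous term, $\varepsilon\, w_x w\big|_{-\ell}^{\ell}$, is eliminated exactly by $w_x(\pm\ell,t)=0$. A secondary technical point is justifying the integrations by parts at the regularity level claimed — this is standard once one works with the Galerkin/regularized approximations and passes to the limit, and I would simply remark that the identities are first derived for smooth approximate solutions and then the estimate, being uniform, survives the limit. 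With these caveats handled, the rest is a routine energy-plus-Grönwall computation, so I would not grind through the constants.
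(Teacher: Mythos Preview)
Your approach is exactly the paper's: multiply the momentum equation by $w$, integrate over $I$, use the continuity equation to recognize the time derivative of the physical energy $E=\frac{1}{2}\rho w^2+\frac{\kappa}{\gamma-1}\rho^\gamma$, and collect boundary contributions; the paper arrives at $\frac{d}{dt}\int E + \varepsilon\int w_x^2 = C$ and integrates in time, just as you outline.

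One point to correct: \eqref{neuman} says $v_x(\pm\ell)=0$, i.e.\ $(\rho w)_x(\pm\ell)=0$, which is \emph{not} the same as $w_x(\pm\ell)=0$ --- expanding, $\rho_x w + \rho w_x = 0$ at the endpoints, so $w_x(\pm\ell)=-\rho_x(\pm\ell)w_\pm/\rho_\pm$, which need not vanish. Hence your claim that the viscous boundary term $\varepsilon w_x w\big|_{-\ell}^{\ell}$ drops out for free is not justified. The paper's own proof does not isolate this term either --- it simply absorbs all boundary contributions into a constant $C$ --- so your sketch matches the paper's level of detail; but be aware that a fully rigorous accounting would have to say more (for instance, use $(\rho w)_x=0$ to rewrite $w_x(\pm\ell)$ in terms of $\rho_x(\pm\ell)$ and then appeal to the $H^1$ control on $\rho$ obtained later, or close with a Gr\"onwall argument).
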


\begin{proof}
By combining the two equations in \eqref{A10} we get
\begin{equation*}
\rho w_t + \rho w w_x + P_x = \varepsilon w_{xx}.
\end{equation*}
By integrating in space over $I$ and by multiplying by $w$
\begin{equation*}
\frac{1}{2}\int \rho (w^2)_t-\frac{1}{2}\int w^2 (\rho w)_x + \frac{1}{2}\rho w^3\big|_I + \int P_x w- \int \varepsilon w_{xx} w=0,
\end{equation*}
where we also integrated by parts. We now observe that
\begin{equation*}
\begin{aligned}
\int P_x w &= \g \k \rho^{\g-1} \left( -\rho w_x-\rho_t\right)\\
&=-\frac{d}{dt}\int \k \rho^{\g}-\int\k \g\rho^{\g} w_x \\
&=-\frac{d}{dt}\k\int  \rho^{\g}+\int \g P_x w -\g P w\big|_{I},
\end{aligned}
\end{equation*}
implying
\begin{equation*}
\int P_x w =\frac{\k}{\g-1}\frac{d}{dt}\int  \rho^{\g}-\frac{\g}{\g-1}P w\big|_{I}.
\end{equation*}
Summing up, recalling the energy formula $E(\rho(x,t),w(x,t))= \frac{1}{2}\rho w^2+ \frac{\k}{\g-1}\rho^\g$, we have
\begin{equation*}
\frac{d}{dt} \int E + \e \int  w_x^2 = C.
\end{equation*}
Finally, integrating in time we get
\begin{equation*}
\int \left(E(t)-E(\rho(0),w(0))\right) \, dt + \e \int_0^t \int w_x^2 \, dt=C t,
\end{equation*}
implying
\begin{equation}\label{miserve}
\sup_{t\in[0,T] }|\sqrt{\rho} w|_{{}_{L^2}}^2 +\sup_{t \in[0,T]} |\rho|^\g_{{}_{L^\gamma}} + \e \int_0^T \int w_x^2 \, dt \leq C_T.
\end{equation}
\end{proof}
\begin{lemma}\label{A2}
There holds
\begin{equation}\label{stimalinftyrho}
|\rho|_{{}_{L^\infty L^\infty}} \leq C.
\end{equation}
\end{lemma}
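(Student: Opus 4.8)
The plan is to derive a pointwise upper bound on the density by exploiting the structure of the one-dimensional equations. The first step is to rewrite the momentum equation in a form that isolates the "effective viscous flux." Using the mass equation $\rho_t + (\rho w)_x = 0$ one has $w_x = -\rho_t/\rho - w\,\rho_x/\rho = -(\ln\rho)_t - w(\ln\rho)_x$, so that (with $\nu(\rho)=1$) the quantity $\e w_x$ equals $-\e\,\frac{D}{Dt}(\ln\rho)$, where $\frac{D}{Dt} = \partial_t + w\,\partial_x$ is the material derivative. Substituting this into the momentum equation $\rho\,\frac{Dw}{Dt} + P(\rho)_x = \e w_{xx}$ and introducing the Lagrangian mass coordinate $y$ defined by $\partial_x y = \rho$, $\partial_t y = -\rho w$, the system becomes the free-boundary-type Lagrangian system $\tau_t = u_y$, $u_t + P(1/\tau)_y = \e (u_y/\tau)_y$, where $\tau = 1/\rho$ is the specific volume. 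In this formulation one can integrate and obtain the representation $\e(\ln\tau)_t = \e u_y/\tau = $ (momentum flux) $-$ (pressure) $+$ (a function of $t$ only determined by the boundary data); this is essentially the "BD entropy"/effective-flux trick.

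The second and central step is to use this representation together with the energy bound of Lemma \ref{A1} to control $\tau$ from below (equivalently $\rho$ from above). Concretely, one writes $\e(\ln\tau)(y,t) = \e(\ln\tau)(y,0) + \int_0^t \big[\text{flux terms}\big]\,ds$, and the flux terms are estimated using $|\sqrt\rho\, w|_{L^\infty L^2}$, $|\rho|^\g_{L^\infty L^\g}$ and $|w_x|_{L^2 L^2}$ from \eqref{miserve}, plus the fixed boundary values $w_\pm$, $\rho_-$. The pressure term $P(\rho) = \k\rho^\g$ is the dangerous one — it has the wrong sign for an upper bound on $\rho$ — so the key algebraic point is that in the flux representation the pressure appears with a sign that, after integrating, forces $\tau$ to stay bounded below: roughly, if $\rho$ tried to blow up on a set then $\int P$ there would blow up, contradicting $|\rho|_{L^\infty L^\g} \le C_T$, unless it is compensated, and the viscous term $\e(\ln\tau)_t$ is exactly what supplies the a priori control. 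After establishing a lower bound $\tau \ge c > 0$, i.e. $\rho \le 1/c =: C$, on $[0,T]$, one transfers the bound back to Eulerian coordinates (the Lagrangian-to-Eulerian change of variables is bi-Lipschitz once $\rho$ is bounded above and below, and the lower bound on $\rho$ is handled separately/similarly).

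I expect the main obstacle to be precisely the handling of the pressure term and obtaining a bound uniform in $x$ (not merely in an integral norm): the energy estimate only gives $L^\g$ control of $\rho$, and upgrading this to $L^\infty$ requires the pointwise flux identity rather than a Sobolev embedding, since $\rho \in L^\infty_t H^1_x$ is not yet available at this stage (indeed it is one of the conclusions of Theorem \ref{existence}, proved later). A secondary technical point is that the boundary is not free here — $w$ is prescribed at $x=\pm\ell$ and only $\rho(-\ell)$ is prescribed — so the "function of $t$ only" arising from the integration of the flux must be identified using the Neumann-type condition $v_x(\pm\ell,t)=0$ (equivalently $w_x$ at the boundary is controlled through $\rho_t(\pm\ell)$), and one must check the boundary terms produced by integration by parts are benign. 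Once the flux identity is set up correctly, the estimate itself is a Gronwall-type argument on $[0,T]$ with constants depending on $T$ through $C_T$ from Lemma \ref{A1}.
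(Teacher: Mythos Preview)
Your approach and the paper's share the same core identity --- the material derivative of $\ln\rho$ equals $-w_x$, so the viscous term $\e w_x$ becomes $-\e\,\frac{D}{Dt}\ln\rho$ and the momentum equation supplies pointwise control --- but the implementations diverge. You propose to pass to Lagrangian \emph{mass} coordinates, integrate the momentum equation in $y$ to obtain a pointwise flux identity for $\e(\ln\tau)_t$, and then close via Gronwall and the energy bound. The paper instead stays in Eulerian coordinates and runs a \emph{two-characteristic comparison}: it picks a reference particle path $X_1(t)=X(x_1,t)$ along which the density is already controlled (such an $x_1$ exists by mass conservation, since $\int\rho$ is bounded), takes an arbitrary second path $X_2(t)$, and studies $F(t)=\ln\rho(X_2,t)-\ln\rho(X_1,t)$ together with the momentum integral $V(t)=\int_{X_1}^{X_2}\rho w\,dx$. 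A direct computation gives the exact ODE $\e F'+V'=P(\rho(X_1))-P(\rho(X_2))$, whose right-hand side is a genuine damping term (negative when $\rho(X_2)$ is large), and the solution formula then bounds $F(t_0)$ using only $\sup_t|V(t)|\le \big(\int\rho\big)^{1/2}\big(\int\rho w^2\big)^{1/2}\le C$ from Lemma~\ref{A1}.

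What the paper's route buys is precisely the avoidance of the two obstacles you flagged: by taking a \emph{difference} of $\ln\rho$ along two interior characteristics, the undetermined ``function of $t$ only'' from integrating the flux cancels, and no boundary identification (Neumann condition on $v$, value of $\rho$ at $+\ell$, etc.) is needed. Your approach is standard and would work, but to carry it through in this inflow/outflow setting you would need to pin down the boundary contribution explicitly, whereas the comparison argument sidesteps that entirely. Also, a small correction: the pressure term has the \emph{good} sign for an upper bound on $\rho$ (large $\rho$ means large $P$, which drives $\ln\rho$ down); you eventually say this, but your first sentence about it is reversed.
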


\begin{proof}
Consider the Lagrangian flow ${X}=X(x,t)$ of $w$, defined as
\begin{equation}\label{ALF}
 \left\{\begin{aligned}
& \frac{\partial X}{\partial t}=w(X(x,t),x),\\
& X(x,0)=x \in [-\ell,\ell].\\ 
   \end{aligned}\right. 
\end{equation}
In order to prove \eqref{stimalinftyrho} we thus need to prove that
\begin{equation*}
\rho(X(x,t),x) \leq C,
\end{equation*}
for any $(x,t) \in (0,T] \times (-\ell,\ell)$ and for some constant $C \geq 0$. Fixed $t_0 \in (0,T]$ and given the initial mass $\int \rho_0  := m_0 \leq C$, because of the conservation of the mass and from the very definition of the Lagrangian flow  we can find $x_1 \in (-\ell,\ell)$ such that
$$\rho_0(x_1) \geq C^{-1} \quad \mbox{and} \quad \rho(X(x_1,t_0),t_0) \leq C.$$
We now want to prove that, for any $x_2 \in (-\ell,\ell)$, $\rho(x_2, X(x_2,t_0)) \leq C$; we let $X_j:= X(x_j,t)$ for $j=1,2$ and we define
$$F(t)=\log (\rho(X_2,t))-\log (\rho(X_1,t)).$$
By using \eqref{ALF} we have
\begin{equation}\label{1LF}
\begin{aligned}
\frac{dF}{dt}&=\frac{1}{ \rho(X_2,t)}\left(  \rho_t(X_2,t)+\rho_x(X_2,t) \frac{dX_2}{dt}\right)-\frac{1}{ \rho(X_1,t)}\left(  \rho_t(X_1,t))+\rho_x(X_1,t) \frac{dX_1} {dt}\right) \\
& =\frac{1}{ \rho(X_2,t)}\left(-\rho_x(X_2,t)w(X_2,t)-\rho(X_2,t)w_x(X_2,t)+\rho_x(X_2,t) w(X_2,t)\right) \\
& \quad -\frac{1}{ \rho(X_1,t))}\left(-\rho_x(X_1,t)w(X_1,t)-\rho(X_1,t)w_x(X_1,t)+\rho_x(X_1,t) w(X_1,t)\right) \\
&=-w_x(X_2,t)+w_x(X_1,t) \\
&=-\int_{X_1}^{X_2} w_{xx} \, dx\\
&=-\frac{1}{\e}\int_{X_1}^{X_2} (\rho w_t+\rho w w_x + P_x) \, dx.
\end{aligned}
\end{equation}
Let now $$V(t)=\int_{X_1}^{X_2} \rho w \, dx,$$
so that
\begin{equation}\label{2LF}
\begin{aligned}
\frac{dV}{dt}&= (\rho w)(X_2,t)\frac{dX_2}{dt}-(\rho w)(X_1,t)\frac{dX_1}{dt}+ \int_{X_1}^{X_2} (\rho w)_t \, dx\\
&=(\rho w^2)(X_2,t)-(\rho w^2)(X_1,t)+ \int_{X_1}^{X_2} [ -(\rho w)_x w+\rho w_t] \, dx \\
&= \int_{X_1}^{X_2}[ (\rho w^2)_x -(\rho w)_x w+\rho w_t] \, dx \\
&=  \int_{X_1}^{X_2}[ \rho w w_x + \rho w_t] \, dx.
\end{aligned}
\end{equation}
Substituting \eqref{2LF} into \eqref{1LF}  we get
\begin{equation}\label{3LF}
\begin{aligned}
\e\frac{dF}{dt}+\frac{dV}{dt}= - 	 \int_{X_1}^{X_2} P_x \, dx.
\end{aligned}
\end{equation}
By setting 
$$\alpha(t)=\frac{P(\rho(X_1,t))-P(\rho(X_2,t))}{\e F(t)} \geq 0,$$
equation \eqref{3LF} can be rewritten as
\begin{equation*}
\e\frac{dF}{dt}+\frac{dV}{dt}= -\alpha(\e F+V)+\alpha V,
\end{equation*}
with solution given by
\begin{equation*}
\e F(t)+ V(t)= e^{-\int_0^t \alpha(s) \, ds} (\e F(0)+V(0))+ \int_0^t e^{-\int_0^s \alpha(\tau) \, d\tau} \alpha(s) V(s) \, ds.
\end{equation*}
Since $F(0) \leq |\rho_0|_{{}_{L^\infty}}$, we have, for any $t \geq 0$
\begin{equation}\label{4LF}
\begin{aligned}
\e F(t)&\leq \e C+V(0) + |V(t)|+ \int_0^t e^{-\int_0^s \alpha(\tau) \, d\tau} \alpha(s) |V(s)| \, ds \\
&\leq \e C+V(0) + |V(t)|+ \sup_{0\leq t \leq t_0} |V(t)|\int_0^t e^{-\int_0^s \alpha(\tau) \, d\tau} \alpha(s) \, ds \\
& \leq \e C+V(0) + |V(t)|+ \sup_{0\leq t \leq t_0} |V(t)|\,e^{-\int_s^t \alpha(\tau)}\, d\tau \Big|^{t}_{0} \\
& \leq \e C+V(0) + |V(t)|+ \sup_{0\leq t \leq t_0} |V(t)|,
\end{aligned}
\end{equation}
where we used the fact that $e^{-\int_0^t \alpha(s) \, ds}  \leq 1$ for all $t \geq 0$.
We now observe that
\begin{equation*}
\begin{aligned}
|V(0)| &\leq \int_{X_1}^{X_2} \rho_0 |w_0| \, dx\leq C, \\
|V(t)| & \leq  \int_{X_1}^{X_2} \rho |w| \, dx \leq \left(  \int_{X_1}^{X_2} \rho \, dx \right)^{1/2} \left( \int_{X_1}^{X_2}\rho w^2 \, dx \right)^{1/2},
\end{aligned}
\end{equation*}
implying
\begin{equation*}
\begin{aligned}
 \sup_{0\leq t \leq t_0} |V(t)| & \leq \sup_{0\leq t \leq t_0}\left(  \int \rho \right)^{1/2} \left( \int\rho w^2 \right)^{1/2} \leq C,\\
\end{aligned}
\end{equation*}
where the quantity on the right hand side is bounded by a constant because of   \eqref{miserve}. Combining the above estimates with \eqref{4LF}, we finally obtain $\e F(t_0) \leq C$, implying
\begin{equation*}
\log (\rho(X(t_0,x_2),t_0))=\log (\rho(X(t_0,x_1),t_0))+ F(t_0) \leq C.
\end{equation*}
The claim then follows because of the arbitrariness of $x_2$ and $t_0$.

\end{proof}

\begin{remark}\label{normapiccola}{\rm
We point out that, because of assumption {\bf H3} on the boundary data, it necessary has to be $|\rho_0|_{_{L^	\infty}} < \delta_1$. In particular this implies that  the constant $C$ in \eqref{stimalinftyrho} is less than $\delta_1 >0$.
}
\end{remark}

\begin{lemma}\label{A3} There exists $T >0$ such that, for all $t \in [0,T]$, there holds
\begin{equation*}
\int_0^T |\sqrt{\rho} w_t|^2_{{}_{L^2}} \, dt + |w_x|^2_{{}_{L^2}} \leq C_T \quad \mbox{and} \quad \sup_{0 \leq t \leq T} \left( |w|_{{}_{L^\infty}}+ |w_x|_{{}_{L^2}}\right) \leq C_T,
\end{equation*}
for some constant $C_T>0.$
\end{lemma}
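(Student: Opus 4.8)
The plan is to run the second--order energy estimate for the momentum equation obtained by combining the two equations in \eqref{A10} (recall that in the Appendix $\nu\equiv1$ and $P(\rho)=\k\rho^{\g}$):
\[
\rho\, w_t+\rho\, w\, w_x+P_x=\e\, w_{xx}.
\]
I would test this identity with $w_t$ and integrate over $I$. In the viscous term an integration by parts gives $\e\int w_{xx}w_t=\e[w_x w_t]_{-\ell}^{\ell}-\frac{\e}{2}\frac{d}{dt}\int w_x^2$, and here the crucial point is that, the boundary values $w_\pm$ being independent of time, one has $w_t(\pm\ell,t)\equiv0$, so the boundary term vanishes. One is thus left with
\[
\int \rho\, w_t^2+\frac{\e}{2}\frac{d}{dt}\int w_x^2=-\int \rho\, w\, w_x\, w_t-\int P_x\, w_t .
\]

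For the convective term I would use Young's inequality, $\big|\int\rho\, w\, w_x\, w_t\big|\le\frac14\int\rho\, w_t^2+\int\rho\, w^2 w_x^2$, and absorb the first piece into the left--hand side. To estimate the second piece I would bound $|w|_{{}_{L^\infty}}$ through the boundary data and the gradient: writing $w(x,t)=w_+-\int_x^{\ell}w_x(y,t)\,dy$ gives $|w(\cdot,t)|_{{}_{L^\infty}}\le|w_+|+\sqrt{2\ell}\,|w_x(\cdot,t)|_{{}_{L^2}}$, hence, using Lemma \ref{A2},
\[
\int\rho\, w^2 w_x^2\le|\rho|_{{}_{L^\infty L^\infty}}\,|w|_{{}_{L^\infty}}^2\,|w_x|_{{}_{L^2}}^2\le C\big(1+|w_x|_{{}_{L^2}}^2\big)\,|w_x|_{{}_{L^2}}^2 ,
\]
with $C$ depending only on the data and on the constants of Lemmas \ref{A1}--\ref{A2}.

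The delicate term is the pressure contribution $\int P_x w_t$: since we do not yet have any control on $\rho_x$, I would integrate by parts in $x$ (the boundary term again vanishing since $w_t(\pm\ell,t)=0$) to write $\int P_x w_t=-\int P\, w_{xt}=-\frac{d}{dt}\int P\, w_x+\int P_t\, w_x$, and then use the continuity equation $P_t=P'(\rho)\rho_t=-P'(\rho)(\rho w)_x$ to re-express $\int P_t w_x$ in terms of quantities already controlled. A further integration by parts produces a term of the shape $\int(\cdot)\,w\,w_{xx}$, which I would remove via the momentum equation $\e\, w_{xx}=\rho\, w_t+\rho\, w\, w_x+P_x$; this generates one more multiple of $\int\rho\, w_t^2$ (absorbed on the left) together with lower--order terms estimated through $|\rho|_{{}_{L^\infty L^\infty}}$, $|\sqrt\rho\, w|_{{}_{L^\infty L^2}}$ and $|w_x|_{{}_{L^2L^2}}$ from Lemmas \ref{A1}--\ref{A2}. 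Collecting all contributions, and noting that the auxiliary quantity $\mathcal F(t):=\frac{\e}{2}|w_x(t)|_{{}_{L^2}}^2+\int P\, w_x$ satisfies $\big|\int P\, w_x\big|\le\frac{\e}{4}|w_x|_{{}_{L^2}}^2+C$ (so that $\mathcal F$ is comparable with $|w_x(t)|_{{}_{L^2}}^2$ up to constants), one arrives at a differential inequality of the type
\[
\frac{d}{dt}\mathcal F(t)+c\int\rho\, w_t^2\le C\big(1+\mathcal F(t)\big)^2 .
\]

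A standard comparison argument then gives a time $T>0$ (shrinking if necessary the $T$ provided by Lemma \ref{A1}) on which $\mathcal F(t)$, hence $|w_x(t)|_{{}_{L^2}}^2$, stays bounded by a constant $C_T$, which is the asserted bound on $|w_x|^2_{{}_{L^2}}$; integrating the inequality in $t\in[0,T]$ yields in addition $\int_0^T|\sqrt\rho\, w_t|^2_{{}_{L^2}}\,dt\le C_T$, and the pointwise bound $|w|_{{}_{L^\infty}}\le C_T$ follows from the Sobolev--type inequality used for the convective term. I expect the main obstacle to be precisely the treatment of $\int P_x w_t$: without an a priori bound on $|\rho_x|_{{}_{L^2}}$ one is forced to transfer the $x$--derivative onto $w_t$ and to exploit the structure of the continuity equation, and some care is needed with the various boundary terms arising along the way --- in particular at $x=\ell$, where no boundary condition is imposed on $\rho$ --- which is where the constancy in time of $w_\pm$, giving $w_t(\pm\ell,t)=0$, is used repeatedly to make those contributions disappear.
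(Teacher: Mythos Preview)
Your proposal is correct and follows essentially the same line as the paper's proof: test the momentum equation with $w_t$, rewrite the pressure contribution as $\int P_x w_t=-\frac{d}{dt}\int P\,w_x+\int P_t\,w_x$ via the continuity equation, and eliminate the resulting second--order term through the momentum equation itself; the paper packages this last substitution by introducing the auxiliary variable $G:=w_x-P$, for which $G_x=\rho w_t+\rho w w_x$, but the content is identical. The paper then closes by invoking ``Gronwall'' on an inequality whose right--hand side carries $\int_0^t|w_x|_{{}_{L^2}}^4\,dt$, which is exactly your Riccati comparison in integrated form --- your phrasing of this final step is in fact the more accurate one.
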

\begin{proof}
We multiply the second  equation in \eqref{A10} by $w_t$ \and we integrate in space. We get
\begin{equation*}
\begin{aligned}
\int \rho_t w w_t + \rho w_t^2 + (\rho w^2)_x w_t + P_x w_t &= \int \rho w_t^2 + \int \rho w w_x w_t + \int P_x w_t \\
&= \int \e w_{xx} w_t.
\end{aligned}
\end{equation*}
We also have
\begin{equation*}
\begin{aligned}
(\rho w_t) w w_x &= (-\rho_t w + \e w_{xx}-(\rho w^2)_x -P_x) w w_x \\
& = ((\rho w)_x w + \e w_{xx}-(\rho w^2)_x -P_x)w w_x \\
& =\rho w^2 w_x^2 + { \e w_{xx} w w_x - P_x w w_x}, \\
&=\rho w^2 w_x^2+{ G_x w w_x+ (\e-1)w_{xx} w w_x},
\end{aligned}
\end{equation*}
where $G:= w_x -P$.
The previous equality implies
\begin{equation}\label{imp}
\begin{aligned}
\int \rho w_t^2 + \e \frac{d}{dt}\int \frac{1}{2} w^2_x&\leq \e  w_x w_t\big|_{I} + \int P w_{xt} - P w_t\big|_I \\
& \quad+ \int \rho w^2 w_x^2 +{ \int G_x w w_x+ (\e-1)\int w_{xx} w w_x} \\
& \leq C+ \int \rho w^2 w_x^2 + \int P w_{xt} +  { \int G_x w w_x }. \\
\end{aligned}
\end{equation}
Going further
\begin{equation*}
\begin{aligned}
\int G_x w w_x \leq \left( \int G_x^2\right)^{1/2} \left( \int w^2 w_x^2\right)^{1/2}& \leq |G_x|_{{}_{L^2}} |w|_{{}_{L^\infty}}\left( \int w_x^2\right)^{1/2} \\
& \leq |G_x|_{{}_{L^2}} |w_x|_{{}_{L^2}} |w_x|_{{}_{L^2}}\\
& \leq |G_x|_{{}_{L^2}} |w_x|^2_{{}_{L^2}} .
\end{aligned}
\end{equation*}
Moreover, a straightforward computation shows that
\begin{equation*}
\begin{aligned}
&\int P w_{xt}= \frac{d}{dt} \int P w_x - \int Pw (G_x + P_x) + (\gamma-1)\int (G+P)^2 P, \\
&\frac{1}{2(2\gamma-1)}\frac{d}{dt}\int P^2 = \int P w P_x + C,
\end{aligned}
\end{equation*}
and
\begin{equation*}
(\g-1)\int P w_x^2 = (\g-1)\int P G^2-4(\g-1)\int P P_x w -(\g-1)\int P^3 + C,
\end{equation*}
where we used the explicit expression fo the pressure  $P(\rho)= \k \rho^\g$.
By using the previous identities  we finally obtain
\begin{equation*}
\int P w_{xt} =C+ \frac{d}{dt}\int P w_x- \int P wG_x + (\g-1) \int  P(G^2-P^2)  
 -\frac{4\g-3}{2(2\g-1)}\frac{d}{dt}\int P^2.
\end{equation*}
We can thus integrate in time \eqref{imp}, obtaining
\begin{equation}\label{ineqcompleta}
\begin{aligned}
\int_0^t\int \rho w_t^2 \, dt + \e \frac{d}{dt} \int_0^t\int  w_x^2 \, dt &\leq C +  \int_0^t\int \rho w^2 w_x^2 \, dt  \\
& \quad +{ \int_0^t  |G_x|_{{}_{L^2}} |w_x|^2_{{}_{L^2}}  \, dt} \\
& \quad + (\g-1) \int_0^t \int PG_2 \, dt - (\g-1)\int_0^t \int P^3 \, dt \\
& \quad + \int_0^t \int P |w| |G_x| \, dt \\
& \quad + \int \left( P w_x(t)-Pw_x(0)\right) -\frac{4\g-3}{2(2\g-1)} \int [P^2(t)-P^2(0)]. \\
\end{aligned}
\end{equation}
There hold the following estimates for the terms appearing on the right hand side of 	\eqref{ineqcompleta}
\begin{equation*}
\begin{aligned}
{ \int_0^t  |G_x|_{{}_{L^2}} |w_x|^2_{{}_{L^2}}  \, dt } &\leq  \left( \int_0^t |G_x|_{{}_{L^2}}^2 \, dt\right)^{1/2}  \left( \int_0^t |w_x|_{{}_{L^2}}^4 \, dt\right)^{1/2}\\
& \lesssim  \left( \int_0^t |G_x|_{{}_{L^2}}^2 \, dt\right)+ \left( \int_0^t |w_x|_{{}_{L^2}}^4 \, dt\right); \\
- (\g-1)\int_0^t \int P^3 \,dt &<0; \\
  \int [P^2(t)-P^2(0)] =  \int (\rho^{2\gamma}-\rho_0^{2\gamma}) &\leq C \qquad \mbox{for Lemma \ref{A2}}; \\
  \int Pw_x(0) &\leq C \qquad \mbox{for the regularity of the initial data;}  \\
  \int P w_x(t) & \leq |\rho|_{{}_{L^\infty}}^{\gamma} \int w_x^2. \qquad
\end{aligned}
\end{equation*}
Inequality \eqref{ineqcompleta} can be thus rewritten as
\begin{equation}\label{ultimastima}
\begin{aligned}
\int_0^t |\sqrt{\rho} w_t|^2_{{}_{L^2}} \, dt + \e | w_x|^2_{{}_{L^2}} &\leq C+\int_0^t\int (\rho w^2 w_x^2 + PG^2 + P |w| |G_x|) \, dt\\
& \leq  C+ \left(|\rho|_{{}_{L^\infty L^\infty}} {+1}\right) \int_0^t |w_x|^4_{{}_{L^2}} \, dt \\
& \quad +|\rho|^{\g+1/2}_{{}_{L^\infty L^\infty}}\sup_t |\sqrt{\rho}w|_{{}_{L^2}} \int_{0}^t |w_x|^{2}_{{}_{L^2}} \, dt \\
&\quad +  |\rho|_{{}_{L^\infty L^\infty}} \int_0^t |\sqrt{\rho} w_t|^2_{{}_{L^2}} \, dt		\\
&\quad + |\rho|^{\g}_{{}_{L^\infty L^\infty}}\sup_t |\sqrt{\rho}w|_{{}_{L^2}} \int_{0}^t |\sqrt{\rho} w_t|_{{}_{L^2}} \, dt \\
& \quad +  |\rho|_{{}_{L^\infty}}^{\gamma} |w_x|^2_{{}_{L^2}} ,
\end{aligned}
\end{equation}
where we used
\begin{equation*}
\begin{aligned}
\int_0^t\int \rho w^2 w_x^2  \, dt&\leq \int_0^t |\rho|_{{}_{L^\infty}} |w|^2_{{}_{L^\infty}} |w|^2_{{}_{L^2}} \leq |\rho|_{{}_{L^\infty L^\infty}} \int_0^t |w_x|^4_{{}_{L^2}} \, dt; \\
\int_0^t\int PG^2 \, dt &\leq \int_0^t\int P(w_x-P)^2 \, dt \leq \int_0^t\int P (w_x^2+P^2) \, dt\leq \int_0^t\int P^3 \, dt + \int_0^t\int  P^2 w_x^2 \, dt\\
&  \leq |P|^{3}_{{}_{L^\infty L^\infty}}+  |P|^{2}_{{}_{L^\infty L^\infty}}\int_0^t\int w_x^2 \, dt \leq C \qquad \mbox{ for Lemmas \ref{A1} and \ref{A2}}; \\
 |G_x|_{{}_{L^2}} &= |\sqrt{\rho}(\sqrt{\rho} w_t + \sqrt{\rho} w w_x)|_{{}_{L^2}} \\
 &\leq |\rho|_{{}_{L^\infty}}^{1/2} \left( |\sqrt{\rho} w_t|_{{}_{L^2}} + |\sqrt{\rho} w w_x|_{{}_{L^2}}\right) \\
 & \leq |\rho|_{{}_{L^\infty}}^{1/2} \left( |\sqrt{\rho} w_t|_{{}_{L^2}} +|{\rho}|_{{}_{L^\infty}}^{1/2} |  w_x|^{2}_{{}_{L^2}}\right);  \\
 \int_0^t |G_x|^2_{{}_{L^2}} & \leq  |\rho|_{{}_{L^\infty L^\infty}} \int_0^t |\sqrt{\rho} w_t|^2_{{}_{L^2}} \, dt+ |\rho|_{{}_{L^\infty L^\infty}} \int_0^t 	 |w_x|^4_{{}_{L^2}} \, dt;	\\
 \int_0^t\int P |w||G_x| \, dt &\leq \int_0^t |\rho|_{{}_{L^\infty}}^{\gamma-1/2} |\sqrt{\rho} w|_{{}_{L^2}} |G_x|_{{}_{L^2}}  \\
 & \leq |\rho|_{{}_{L^\infty L^\infty}}^\gamma \sup_{t} |\sqrt{\rho} w|_{{}_{L^2}} \int_0^t |\sqrt{\rho} w_t|_{{}_{L^2}} \, dt +   |\rho|_{{}_{L^\infty L^\infty}}^{\gamma+1/2} \sup_{t} |\sqrt{\rho} w|_{{}_{L^2}} \int_0^t |w_x|^{2}_{{}_{L^2}} \, dt.
\end{aligned}
\end{equation*}
Recalling that $|\rho|_{{}_{L^\infty}} < \delta$ by Remark \ref{normapiccola}, \eqref{ultimastima} becomes
\begin{equation}\label{ultimastima2}
\begin{aligned}
(1-\e)\int_0^t |\sqrt{\rho} w_t|^2_{{}_{L^2}} \, dt + (\e-\delta^\gamma) | w_x|^2_{{}_{L^2}}  \, dt
& \leq  C+ {\left(|\rho|_{{}_{L^\infty L^\infty}}{+1}\right)} \int_0^t |w_x|^4_{{}_{L^2}} \,  \\
& \quad + |\rho|^{\g+1/2}_{{}_{L^\infty L^\infty}}\sup_t |\sqrt{\rho}w|_{{}_{L^2}} \int_{0}^t |w_x|^{{2}}_{{}_{L^2}} \, dt \\
&\quad + |\rho|^{\g}_{{}_{L^\infty L^\infty}}\sup_t |\sqrt{\rho}w|_{{}_{L^2}} \int_{0}^t |\sqrt{\rho} w_t|_{{}_{L^2}} \, dt. \\
\end{aligned}
\end{equation}
By applying the Gronwall's inequality to \eqref{ultimastima2} we thus end up with
\begin{equation}\label{primastima}
\int_0^t |\sqrt{\rho} w_t|^2_{{}_{L^2}} \, dt+  | w_x|^2_{{}_{L^2}} \leq C_T,
\end{equation}
providing $\delta<\e^{1/\gamma}$.
Moreover, by passing to the sup in time for $t \in [0,T]$ in \eqref{primastima} we also have
\begin{equation}\label{regolaritau}
\sup_{0 \leq t \leq T} | w_x|^2_{{}_{L^2}} \leq C_T,
\end{equation}
and, recalling that $|w|_{{}_{L^\infty}} \leq C |w_x|_{{}_{L^2}}$ by Sobolev embedding, the proof is complete.

\end{proof}

\begin{lemma}\label{A4}  There exists $T >0$ such that, for all $t \in [0,T]$ and for some constant $C_T>0$, there holds
\begin{equation*}
\int_0^T \left( |w_x|^2_{{}_{L^\infty}} + |G_x|^2_{{}_{L^2}}\right) \, dt \leq C_T,
\end{equation*}
 where $G_x:=w_{xx}-P_x$.
\end{lemma}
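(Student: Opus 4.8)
The plan is to work throughout with the auxiliary quantity $G:=w_x-P$ already used in the proof of Lemma \ref{A3}, rather than with $w_{xx}$ directly; this is the only real point of the argument, since it lets us estimate $w_x$ in $L^\infty_x$ using \emph{only} the quantities already controlled in Lemmas \ref{A1}--\ref{A3}, and in particular without any bound on $\rho_x$.

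First I would treat the term $|G_x|_{{}_{L^2}}$. Recall from the computation in the proof of Lemma \ref{A3} that $G_x=w_{xx}-P_x=\rho\,w_t+\rho\,w w_x$, hence
\[
|G_x|_{{}_{L^2}}=\bigl|\sqrt{\rho}\,(\sqrt{\rho}\,w_t+\sqrt{\rho}\,w w_x)\bigr|_{{}_{L^2}}\le|\rho|_{{}_{L^\infty}}^{1/2}\Bigl(|\sqrt{\rho}\,w_t|_{{}_{L^2}}+|\rho|_{{}_{L^\infty}}^{1/2}\,|w|_{{}_{L^\infty}}\,|w_x|_{{}_{L^2}}\Bigr).
\]
Squaring, integrating over $t\in[0,T]$, and using that $|\rho|_{{}_{L^\infty L^\infty}}\le C$ (Lemma \ref{A2}), that $\int_0^T|\sqrt{\rho}\,w_t|^2_{{}_{L^2}}\,dt\le C_T$ and $\sup_{[0,T]}(|w|_{{}_{L^\infty}}+|w_x|_{{}_{L^2}})\le C_T$ (Lemma \ref{A3}), one obtains
\[
\int_0^T|G_x|^2_{{}_{L^2}}\,dt\lesssim|\rho|_{{}_{L^\infty L^\infty}}\int_0^T|\sqrt{\rho}\,w_t|^2_{{}_{L^2}}\,dt+|\rho|^2_{{}_{L^\infty L^\infty}}\,T\,|w|^2_{{}_{L^\infty L^\infty}}\,\sup_{[0,T]}|w_x|^2_{{}_{L^2}},
\]
which by Lemmas \ref{A2}--\ref{A3} is bounded by $C_T$.

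Next, for the term $|w_x|^2_{{}_{L^\infty}}$, I would write $w_x=G+P$ and apply the one-dimensional Sobolev embedding on $(-\ell,\ell)$, $|G|_{{}_{L^\infty}}\lesssim|G|_{{}_{L^2}}+|G_x|_{{}_{L^2}}$, together with $|P|_{{}_{L^\infty}}=\k\,|\rho|^\g_{{}_{L^\infty}}\le C$ (Lemma \ref{A2}), to get
\[
|w_x|^2_{{}_{L^\infty}}\lesssim|G|^2_{{}_{L^2}}+|G_x|^2_{{}_{L^2}}+|P|^2_{{}_{L^\infty}}.
\]
Since $|G|_{{}_{L^2}}\le|w_x|_{{}_{L^2}}+|P|_{{}_{L^2}}$ is bounded uniformly on $[0,T]$ by Lemmas \ref{A2}--\ref{A3}, integrating in time and inserting the bound on $\int_0^T|G_x|^2_{{}_{L^2}}\,dt$ just established yields $\int_0^T|w_x|^2_{{}_{L^\infty}}\,dt\le C_T$. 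Adding the two contributions gives the claim.

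I expect no genuine obstacle here: once Lemmas \ref{A1}--\ref{A3} are available the proof is a short chain of elementary interpolation and embedding estimates. The one place where a little care is needed --- and the reason the statement is phrased through $G=w_x-P$ --- is precisely to route all estimates through $G$, so that the (not yet available) $L^2$ control of $\rho_x$ never enters.
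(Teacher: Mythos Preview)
Your proposal is correct and follows essentially the same route as the paper: both arguments write $w_x=G+P$, use the identity $G_x=\rho w_t+\rho w w_x$ (coming from the momentum equation) to bound $\int_0^T|G_x|^2_{L^2}\,dt$ via Lemmas~\ref{A2}--\ref{A3}, and then control $|w_x|_{L^\infty}$ by $|G|_{H^1}+|P|_{L^\infty}$. Your explicit invocation of the full one-dimensional embedding $|G|_{L^\infty}\lesssim|G|_{L^2}+|G_x|_{L^2}$ is in fact slightly more careful than the paper's shorthand, and your remark about why one routes through $G$ (to avoid any premature appeal to $\rho_x\in L^2$) is exactly the point.
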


\begin{proof}
From the very definition of $G$ we have
\begin{equation*}
|w_x|^2_{{}_{L^\infty}}  \leq 2|G|_{{}_{L^\infty}}^2 +2 |P|^2_{{}_{L^\infty}} \leq 2|G_x|_{{}_{L^2}}^2 + 2|P|^2_{{}_{L^\infty}}.
\end{equation*}
Moreover, on one side
\begin{equation*}
\int_0^T|G|^2_{{}_{L^2}} \, dt  \leq  \int_0^T\int |w_x|^2  dt +\int_0^T\int  |P|^2 dt \leq C_T,
\end{equation*}
because of Lemma \ref{A1}. On the other side
\begin{equation*}
\begin{aligned}
G_x = w_{xx}-P_x = (\rho w)_t + (\rho w)_x &= \rho w_t +\rho_t w + \rho_x w^2 + 2 \rho w w_x \\
&= \rho w_t-\rho_x w^2 + \rho  w w_x  +\rho_x w^2 + 2 \rho w w_x,
\end{aligned}
\end{equation*}
implying
\begin{equation*}
\int_0^T|G_x|^2_{{}_{L^2}} \, dt  \leq \int_0^T \left( |\sqrt{\rho} w_t|^2_{{}_{L^2}}  + |w w_x|^2_{{}_{L^2}} \right) \, dt\leq C +  \int_0^T |w|^2_{{}_{L^\infty}}|w_x|^2_{{}_{L^2}}\, dt \leq C_T,
\end{equation*}
again because of Lemma \ref{A1} and Lemma \ref{A3}. The claim then follows.

\end{proof}
\begin{remark}\label{perlyapunov}
We observe that, since $w_{xx}=G_x+P_x$,  Lemma \ref{A4} also implies $|w_{xx}|_{{}_{L^2L^2}} \leq C_T$. This, together with Lemma \ref{A1}, implies that $|w|_{{}_{L^2H^2}} \leq C_T$.

\end{remark}

\begin{lemma}\label{A5}   There exists $T >0$ such that, for all $t \in [0,T]$  there holds
\begin{equation*}
\sup_{0 \leq t \leq T} |\rho|_{{}_{L^2}} \leq C_T,
\end{equation*}
for some constant $C_T>0$.
\end{lemma}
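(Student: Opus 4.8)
The plan is to deduce the bound immediately from the $L^\infty$ estimate of Lemma~\ref{A2}. Since the interval $I=(-\ell,\ell)$ is bounded, for every $t\in[0,T]$ one has
\begin{equation*}
|\rho(\cdot,t)|_{{}_{L^2}}^2=\int_{-\ell}^\ell\rho(x,t)^2\,dx\le 2\ell\,|\rho(\cdot,t)|_{{}_{L^\infty}}^2\le 2\ell\,|\rho|_{{}_{L^\infty L^\infty}}^2,
\end{equation*}
and the right-hand side is finite by Lemma~\ref{A2}. Taking the supremum over $t\in[0,T]$ then gives the assertion with $C_T=\sqrt{2\ell}\,|\rho|_{{}_{L^\infty L^\infty}}$.

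If instead one prefers to remain within the energy-estimate framework used elsewhere in this appendix, I would argue directly from the continuity equation. Writing $\rho_t+w\rho_x+\rho w_x=0$, multiplying by $\rho$, integrating over $I$ and integrating the convective term by parts, one obtains
\begin{equation*}
\frac12\frac{d}{dt}\int\rho^2=-\frac12\int\rho^2 w_x-\frac12\bigl[\rho^2 w\bigr]_{-\ell}^{\ell}.
\end{equation*}
The first term on the right is bounded by $|\rho|_{{}_{L^\infty}}^2\int|w_x|\lesssim|\rho|_{{}_{L^\infty}}^2|w_x|_{{}_{L^2}}$ on the bounded interval, and the boundary contribution is handled using $w(\pm\ell)=w_\pm$ together with the pointwise bound on $\rho(\ell,t)$ furnished by Lemma~\ref{A2} (recall that $\rho$ is prescribed only at $x=-\ell$). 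I would then integrate in time, using $|\rho|_{{}_{L^\infty L^\infty}}\le C$ from Lemma~\ref{A2}, $\int_0^t|w_x|_{{}_{L^2}}\,dt\le\sqrt{t}\,|w_x|_{{}_{L^2 L^2}}\le C_T$ from Lemma~\ref{A1}, and $\int\rho_0^2\le C$ from the regularity of the initial datum, to conclude $\sup_{0\le t\le T}\int\rho^2\le C_T$; no Gronwall argument is needed, since once the right-hand side is estimated it no longer involves $|\rho|_{{}_{L^2}}$.

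There is no genuine difficulty here; the only point deserving a word of care, in the second route, is the boundary term $\bigl[\rho^2 w\bigr]_{-\ell}^{\ell}$, which forces one to invoke Lemma~\ref{A2} to bound $\rho(\ell,t)$, there being no Dirichlet condition for $\rho$ at the right endpoint. The purpose of recording this elementary estimate separately is that it serves as the starting point for the subsequent $L^2$ control of $\rho_x$, and hence for the $H^1$ bound on $\rho$ announced in Theorem~\ref{existence}.
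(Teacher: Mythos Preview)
Your proposal is correct. Your first route---using the trivial embedding $L^\infty(I)\hookrightarrow L^2(I)$ on the bounded interval together with Lemma~\ref{A2}---is in fact more elementary than what the paper does. The paper instead follows (a version of) your second route: it multiplies the continuity equation by $\rho$, integrates by parts to reach $\frac{1}{2}\frac{d}{dt}\int\rho^2\le \frac{1}{2}\int\rho^2|w_x|+C$, then integrates in time and bounds the right-hand side via $|\rho|_{{}_{L^\infty L^\infty}}$ (Lemma~\ref{A2}) and $\int_0^t|w_x|_{{}_{L^2}}$ (Lemma~\ref{A3}). Since the paper's energy argument already invokes Lemma~\ref{A2} to close the estimate, your direct appeal to the $L^\infty$ bound short-circuits the computation with no loss; the energy route buys nothing extra here, though it does illustrate the pattern used for the $\rho_x$ estimate in the next lemma, where no $L^\infty$ shortcut is available.
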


\begin{proof}
Multiplying by $\rho$ the first equation in \eqref{A10} and integrating in space we get
\begin{equation*}
\begin{aligned}
\int \rho_t \rho = -\int \rho_x w \rho -\int\rho w_x \rho &= \int \rho (w \rho)_x-\int \rho^2 w_x + C \\
&=\int \rho w \rho_x + C.
\end{aligned}
\end{equation*}
Hence
\begin{equation*}
\frac{1}{2}\frac{d}{dt} \int \rho^2 \leq \frac{1}{2 } \int \rho^2 |w_x| + C,
\end{equation*}
implying, after integrating in time
\begin{equation*}
\begin{aligned}
\int \rho^2 &\leq \int_0^t \int |\rho^2| |w_x| \, dt+ C t\\
& \leq \int_0^t \left[\left( \int \rho^4\right)^{1/2} \left( \int w_x^2\right)^{1/2}\right] + Ct \\
& \leq |\rho|_{{}_{L^\infty L^\infty}}^4 \int_0^t |w_x|_{{}_{L^2}} + C t,
\end{aligned}
\end{equation*}
and the claim follows from Lemma \ref{A3}.

\end{proof}

\begin{lemma}\label{A6} There exists $T >0$ such that, for all $t \in [0,T]$  there holds
\begin{equation*}
\sup_{0 \leq t \leq T} |\rho_x|_{{}_{L^2}} \leq C_T,
\end{equation*}
for some constant $C_T>0$.
\end{lemma}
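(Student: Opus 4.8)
The plan is to run an $L^{2}$ energy estimate for $\rho_{x}$ based on the differentiated mass equation. Applying $\partial_{x}$ to $\rho_{t}+w\rho_{x}+\rho w_{x}=0$ gives $\rho_{xt}+w\rho_{xx}+2w_{x}\rho_{x}+\rho w_{xx}=0$; multiplying by $\rho_{x}$, integrating over $I=(-\ell,\ell)$ and integrating by parts in $\int w\rho_{x}\rho_{xx}$ yields
\[
\frac{1}{2}\frac{d}{dt}\int \rho_{x}^{2}\,dx+\frac{1}{2}\Big[w\rho_{x}^{2}\Big]_{-\ell}^{\ell}+\frac{3}{2}\int w_{x}\rho_{x}^{2}\,dx+\int \rho\,w_{xx}\,\rho_{x}\,dx=0.
\]
The key point is to rewrite $w_{xx}=G_{x}+P_{x}$, where $G_{x}=w_{xx}-P_{x}$ is the quantity bounded in $L^{2}([0,T],L^{2}(I))$ by Lemma~\ref{A4}. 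Then $\int \rho\,w_{xx}\,\rho_{x}\,dx=\int \rho\,G_{x}\,\rho_{x}\,dx+\kappa\gamma\int \rho^{\gamma}\rho_{x}^{2}\,dx$; the last integral is nonnegative (only $P'>0$ and $\rho>0$ are used, no lower bound on $\rho$) and is simply discarded, while $\int \rho\,G_{x}\,\rho_{x}\,dx$ is controlled by $|\rho|_{{}_{L^{\infty}}}|G_{x}|_{{}_{L^{2}}}|\rho_{x}|_{{}_{L^{2}}}$ and split by Young's inequality into $|G_{x}|^{2}_{{}_{L^{2}}}$ plus a multiple of $|\rho|^{2}_{{}_{L^{\infty}}}|\rho_{x}|^{2}_{{}_{L^{2}}}$.

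Next I would treat the boundary term $\tfrac{1}{2}[w\rho_{x}^{2}]_{-\ell}^{\ell}=\tfrac12 w_{+}\rho_{x}(\ell,t)^{2}-\tfrac12 w_{-}\rho_{x}(-\ell,t)^{2}$. Since $w_{+}>0$, the outflow contribution at $x=\ell$ carries the favorable sign once moved to the right-hand side and can be dropped. At the inflow end $x=-\ell$ the density is prescribed, $\rho(-\ell,t)\equiv\rho_{-}$, so $\rho_{t}(-\ell,t)=0$; evaluating the mass equation there gives $\rho_{x}(-\ell,t)=-\tfrac{\rho_{-}}{w_{-}}\,w_{x}(-\ell,t)$, whence $w_{-}\rho_{x}(-\ell,t)^{2}=\tfrac{\rho_{-}^{2}}{w_{-}}\,w_{x}(-\ell,t)^{2}\leq \tfrac{\rho_{-}^{2}}{w_{-}}\,|w_{x}(t)|^{2}_{{}_{L^{\infty}}}$, which is integrable in time by Lemma~\ref{A4} (the trace is meaningful since $w_{x}(\cdot,t)\in H^{1}(I)\hookrightarrow C^{0}([-\ell,\ell])$ for a.e.\ $t$, by the Remark after Lemma~\ref{A4}).

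Collecting these estimates leads to a differential inequality $\frac{d}{dt}|\rho_{x}(t)|^{2}_{{}_{L^{2}}}\leq a(t)\,|\rho_{x}(t)|^{2}_{{}_{L^{2}}}+b(t)$ with $a(t)=3|w_{x}(t)|_{{}_{L^{\infty}}}+|\rho(t)|^{2}_{{}_{L^{\infty}}}$ and $b(t)=|G_{x}(t)|^{2}_{{}_{L^{2}}}+\tfrac{\rho_{-}^{2}}{w_{-}}|w_{x}(t)|^{2}_{{}_{L^{\infty}}}$. By Lemma~\ref{A2}, $\int_{0}^{T}|\rho|^{2}_{{}_{L^{\infty}}}\,dt\leq C^{2}T$; by Lemma~\ref{A4}, $\int_{0}^{T}|G_{x}|^{2}_{{}_{L^{2}}}\,dt\leq C_{T}$ and $\int_{0}^{T}|w_{x}|^{2}_{{}_{L^{\infty}}}\,dt\leq C_{T}$, so by Cauchy--Schwarz $\int_{0}^{T}|w_{x}|_{{}_{L^{\infty}}}\,dt\leq\sqrt{T\,C_{T}}$ and $\int_{0}^{T}b(t)\,dt\leq C_{T}$. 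Since $\rho_{0}\in H^{1}(I)$, Gronwall's inequality gives $\sup_{0\leq t\leq T}|\rho_{x}(t)|^{2}_{{}_{L^{2}}}\leq C_{T}$, which is the claim.

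I expect the only genuinely delicate point to be the boundary term at the inflow end $x=-\ell$: there its sign is unfavorable, and it is precisely the Dirichlet datum $\rho(-\ell,\cdot)\equiv\rho_{-}$ together with the $L^{2}_{t}L^{\infty}_{x}$ control of $w_{x}$ from Lemma~\ref{A4} that closes the estimate; everything else is a routine Gronwall argument built on the bounds of Lemmas~\ref{A1}--\ref{A5}. If one kept the discarded term $\kappa\gamma\int_{0}^{T}\!\!\int\rho^{\gamma}\rho_{x}^{2}$ on the left-hand side, the same computation would additionally produce a space--time bound for $\rho^{\gamma/2}\rho_{x}$, which is however not needed here.
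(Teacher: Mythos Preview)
Your proof is correct and follows the same strategy as the paper: an $L^{2}$ energy estimate on the differentiated mass equation, the splitting $w_{xx}=G_{x}+P_{x}$, and Gronwall's inequality closed via Lemmas~\ref{A2} and~\ref{A4}. Your treatment is in fact tighter than the paper's in two respects: you observe that $\int\rho\,P_{x}\,\rho_{x}=\kappa\gamma\int\rho^{\gamma}\rho_{x}^{2}\geq 0$ has a favorable sign and can be discarded (the paper instead bounds it by Young and absorbs it into the Gronwall coefficient), and you explicitly handle the boundary contribution $\tfrac{1}{2}[w\rho_{x}^{2}]_{-\ell}^{\ell}$, using the outflow sign at $x=\ell$ and the inflow Dirichlet datum $\rho(-\ell,\cdot)\equiv\rho_{-}$ together with the $L^{2}_{t}L^{\infty}_{x}$ bound on $w_{x}$, whereas the paper simply denotes this term by a constant $C$ without further justification.
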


\begin{proof}
Let us differentiate with respect to $x$ the first equation in \eqref{A10}; by multiplying it by $\rho_x$ and integrating over $I$ we get
\begin{equation}\label{1A5}
\frac{1}{2}\frac{d}{dt} \int \rho_x^2 + \int (\rho_x w + \rho w_x  )_x \rho_x =0.
\end{equation}
Moreover
\begin{equation}\label{2A5}
 \int (\rho_x w )_x \rho_x  = \int \rho_{xx} \rho_x w+ \int \rho_x^2 w_x
\end{equation}
and 
\begin{equation*}
\int \rho_{xx} \rho_x w= - \int \rho_x (\rho_x w)_x + C = C - \int \rho_x \rho_{xx} w -\int{\rho}_x^2 w_x,
\end{equation*}
implying
\begin{equation*}
\int \rho_{xx} \rho_x w = \frac{C}{2}- \frac{1}{2}\int \rho_x^2 w_x.
\end{equation*}
Hence, \eqref{2A5} becomes
\begin{equation*}
\int (\rho_x w )_x \rho_x= \frac{C}{2}+\frac{1}{2}\int \rho_x^2 w_x.
\end{equation*}
Going further
\begin{equation*}
 \int (\rho w_x  )_x \rho_x  = \int \rho_x^2 w_x + \int \rho w_{xx} \rho_x,
 \end{equation*}
and
\begin{equation*}
\begin{aligned}
 \int \rho w_{xx} \rho_x = \int \rho (G_x+P_x) \rho_x& \leq \int\rho^2 (G_x+P_x)^2 + \int \rho_x^2 \\
 & \leq |\rho|^2_{{}_{L^\infty}} \left(\int G_x^2 +  \int P_x^2 \right)+ \int \rho_x^2 \\
 & \leq  |\rho|^2_{{}_{L^\infty}} \int G_x^2+  |\rho|^{2\gamma}_{{}_{L^\infty}} \int \rho_x^2+\int \rho_x^2.
 \end{aligned}
\end{equation*}
Collecting all the above estimates, \eqref{1A5} thus becomes
\begin{equation*}
\frac{1}{2}\frac{d}{dt} \int \rho_x^2 \leq \left(|w_x|_{{}_{L^\infty}} +  |\rho|^{2\gamma}_{{}_{L^\infty}}+ 1\right) \int \rho_x^2 + \int G_x^2.
\end{equation*}
and the claim follows  after integrating in time from an application of Gronwall's inequality, recalling that  $\int_0^t |w_x|^2_{{}_{L^\infty}}$ and $\int_0^t |G_x|^2_{{}_{L^2}} $ are bounded because of Lemma \ref{A4}.

\end{proof}

Combining Lemmas \ref{A5} and \ref{A6} we get $\rho \in L^\infty([0,T], H^1(I))$; this, together with estimate \eqref{regolaritau}, completes the proof of Theorem \ref{existence}.

\baselineskip=16pt

\end{document}